\newcolumntype{M}[1]{>{\centering\arraybackslash}m{#1}}
\newcolumntype{N}{@{}m{0pt}@{}}
\def\.{\hskip.06cm}
\def\ts{\hskip.03cm}
\def\zz{\mathbb Z}
\def\nn{\mathbb N}
\def\rr{\mathbb R}
\def\ov{\widebar}
\def\de{\delta}
\def\<{\langle}
\def\>{\rangle}
\def\rH{{\text {\rm H} } }
\def\0{{\mathbf 0}}
\def\nN{{\mathcal N}}
\def\.{\hskip.06cm}
\def\ts{\hskip.03cm}
\def\P{{\textup{\textsf{P}}}}
\def\liminf{\mathop{\rm lim\,inf}\limits}
\def\limsup{\mathop{\rm lim\,sup}\limits}
\def\R{\mathbb{R}}
\def\E{\mathbb{E}}
\def\P{\mathbb{P}}
\def\r{\mathtt{r}}
\def\eps{\varepsilon}
\def\r{\mathbf{r}}
\def\c{\mathbf{c}}
\def\M{\mathcal{M}}
\def\Geom{\textup{Geom}}
\newcommand*\rel@kern[1]{\kern#1\dimexpr\macc@kerna}
\newcommand*\widebar[1]{%
	\begingroup
	\def\mathaccent##1##2{%
		\rel@kern{0.8}%
		\overline{\rel@kern{-0.8}\macc@nucleus\rel@kern{0.2}}%
		\rel@kern{-0.2}%
	}%
	\macc@depth\@ne
	\let\math@bgroup\@empty \let\math@egroup\macc@set@skewchar
	\mathsurround\z@ \frozen@everymath{\mathgroup\macc@group\relax}%
	\macc@set@skewchar\relax
	\let\mathaccentV\macc@nested@a
	\macc@nested@a\relax111{#1}%
	\endgroup
}
\DeclareMathOperator*{\argmax}{arg\,max}
\newtheorem{theorem}{Theorem}
\numberwithin{theorem}{section}
\numberwithin{equation}{section}
\newtheorem{lemma}[theorem]{Lemma}
\newtheorem{prop}[theorem]{Proposition}
\newtheorem{corollary}[theorem]{Corollary}
\newtheorem{conjecture}[theorem]{Conjecture}
\theoremstyle{definition}
\newtheorem{definition}[theorem]{Definition}
\newtheorem{remark}[theorem]{Remark}
\newcommand{\addresseshere}{%
	\enddoc@text\let\enddoc@text\relax
}
\begin{document}
	
	\title[Phase transition in random contingency tables]{Phase transition in random contingency tables \\ with non-uniform margins}

	\author{Sam Dittmer}
	\address{Sam Dittmer, Department of Mathematics, UCLA, Los Angeles, CA 90095, USA}
	\email{\texttt{samuel.dittmer@math.ucla.edu}}

	\author{Hanbaek Lyu}
	\address{Hanbaek Lyu, Department of Mathematics, UCLA, Los Angeles, CA 90095, USA}
	\email{\texttt{hlyu@math.ucla.edu}}

	\author{Igor Pak}
	\address{Igor Pak, Department of Mathematics, UCLA, Los Angeles, CA 90095, USA}
	\email{\texttt{pak@math.ucla.edu}}


\begin{abstract}
For parameters $n,\delta,B,$ and $C$, let $X=(X_{k\ell})$ be the random uniform contingency table whose first $\lfloor n^{\delta} \rfloor $ rows and columns have margin \ts $\lfloor BCn \rfloor$ \ts and the last $n$ rows and columns have margin \ts $\lfloor Cn \rfloor$. For every \ts $0<\delta<1$, we establish a sharp phase transition of the limiting distribution of each entry of $X$ at the critical value \ts $B_{c}=1+\sqrt{1+1/C}$. In particular, for \ts $1/2<\delta<1$, we show that the distribution of each entry converges to a geometric distribution in total variation distance, whose mean depends sensitively on whether \ts $B<B_{c}$ \ts or \ts $B>B_{c}$. Our main result shows that \ts $\E[X_{11}]$ \ts is uniformly bounded for \ts $B<B_{c}$, but has sharp asymptotic \ts $C(B-B_{c})\ts n^{1-\delta}$ \ts for \ts $B>B_{c}$. We also establish a strong law of large numbers for the row sums in top right and top left blocks.
\end{abstract}
	
	${}$
	\vspace{-1.70cm}
	${}$
	\maketitle

	\section{Introduction}
	\label{Introduction}
	
	\subsection{Random contingency tables}
	Contingency tables are fundamental objects in statistics for studying
	dependence structure between two or more variables, see e.g.~\cite{Ev,FLL,Kat}.
	They also correspond to bipartite multi-graphs with given degrees and play
	an important role in combinatorics and graph theory, see e.g.~\cite{B1,DG,DS}.
	Random contingency tables have been intensely studied in a variety of
	regimes, yet remain largely out of reach in many interesting
	special cases, see e.g.~\cite{B3,CM}.
	
	Let $\r=(r_1,\ldots,r_m) \in \nn^{m}, \c=(c_1,\ldots,c_n)\in \nn^{n}$ be two
	nonnegative integer vectors with the same sum of entries.
	Denote by $\M(\r,\c)$  the set of all $(n\times m)$ contingency tables with
	row sums~$r_i$ and column sums~$c_j$, i.e.
	\begin{align}\label{eq:def_contingency_set}
	\M(\r,\c) \, := \, \left\{ \. \bigl(a_{ij}\bigr)\in \nn^{mn} \,\.\bigg|\,\. \sum_{k=1}^{n}a_{ik} = r_i, \,
	\sum_{k=1}^{m} a_{kj} = c_j\, \ \, \text{for all} \ \. 1\le i\le n,\, 1\le j \le m \. \right\}\ts.
	\end{align}
	
	Let $X=(X_{ij})$ be the contingency table chosen uniformly at random from $\M(\r,\c)$.  The
	asymptotic properties of the entries of $X$ as $m,n \to \infty$ is the subject of this paper.
	When the margins are uniform, i.e.\ $r_1=\ldots=r_m$ and $c_1=\ldots=c_n$, the exact asymptotics
	for $\bigl|\M(\r,\c)\bigr|$ are known~\cite{CM,GM}.  In fact, the distribution of individual
	entries~$X_{ij}$ is asymptotically geometric and the dependence between the entries vanish as the size
	of the table goes to infinity~\cite{CDS}.
	
	In this paper we analyze random square contingency table
	with the first $\lfloor n^{\delta} \rfloor$ row and column margins $\lfloor BCn \rfloor$,
	and the last $n$ row and column margins $\lfloor Cn \rfloor$. Viewing such $X$ as
	block matrices, see Figure~\ref{fig:CT}, it is natural to assume that the entries are again
	nearly independent and identically distributed within each block.  However, there is still
	one degree of freedom remaining: the distribution of mass of each block.  We establish a sharp
	phase transition for this distribution.  The following corollary is a special case of general
	results we present in the next section.
	
	\begin{figure}[hbt]
		\begin{center}
			\includegraphics[width = 0.38 \textwidth]{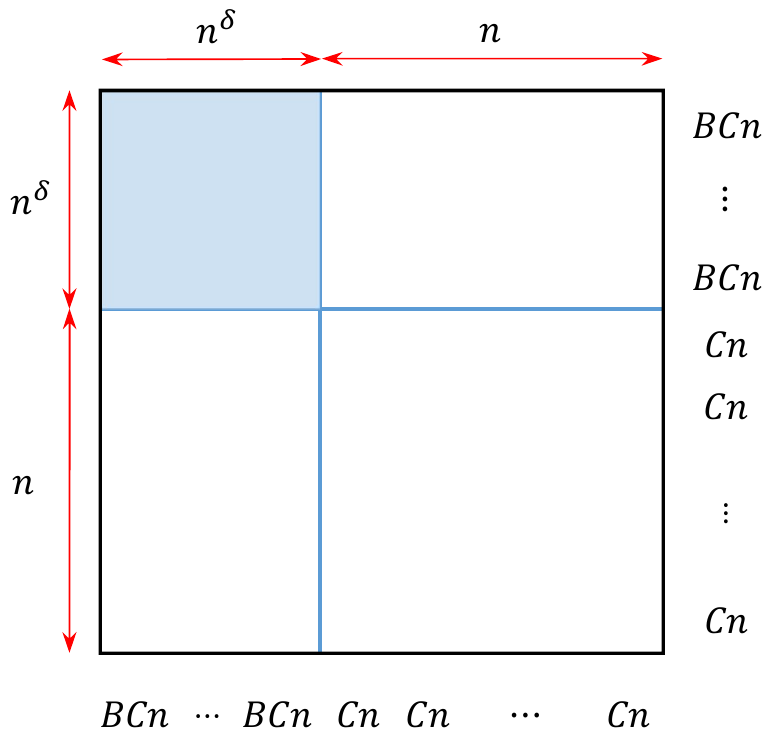}
		\end{center}
		\caption{ Contingency table with parameters $n,\delta,B$ and $C$.
			First $\lfloor n^{\delta} \rfloor$ rows and columns have margins $\lfloor Cn \rfloor$,
			the last $n$ rows and columns have margins $\lfloor BCn \rfloor$.
		}
		\label{fig:CT}
	\end{figure}
	
	\begin{corollary} [see Theorem~\ref{thm:main_expectation}]
		Fix constants $B, C> 0 $ and $1/2<\de<1$.
		Let $X=(X_{ij})$ be the uniform random contingency table
		with the first $\lfloor n^{\delta} \rfloor$ row and column margins $\lfloor BCn \rfloor$,
		and the last $n$ row and column margins $\lfloor Cn \rfloor$.  Then:
		$$
		\E[X_{11}] \. = \. \Theta(1) \ \ \, \text{for} \ \ B<B_c\ts, \quad \, \text{and} \, \quad
		\E[X_{11}] \. = \. \Theta\bigl(n^{1-\de}\bigr) \ \ \, \text{for} \ \ B>B_c\ts,
		$$
		where the \emph{critical value} \ts $B_{c}\ts =\ts 1+\sqrt{1+1/C}$.
	\end{corollary}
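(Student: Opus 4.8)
The plan is to route through the maximum-entropy (``typical table'') description of the uniform measure on $\M(\r,\c)$; since the statement is billed as a specialization of the more general Theorem~\ref{thm:main_expectation}, this is essentially the mechanism that theorem formalizes. Write $\alpha=\lfloor n^{\de}\rfloor$ for the number of heavy rows/columns, $R=\lfloor BCn\rfloor$, $c_0=\lfloor Cn\rfloor$, and let $\r,\c\in\N^{\alpha+n}$ be the (equal) margin vectors, with first $\alpha$ entries $R$ and last $n$ entries $c_0$. First I would introduce the maximum-entropy matrix $Z=Z(\r,\c)$: the unique nonnegative real matrix with margins $(\r,\c)$ maximizing $\sum_{ij}\bigl[(z_{ij}+1)\log(z_{ij}+1)-z_{ij}\log z_{ij}\bigr]$ (these $z_{ij}$ are the parameters of the independent-geometric model whose conditioning on $(\r,\c)$ gives, to leading order, the uniform measure on $\M(\r,\c)$). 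Since $H'(\mu)=\log\frac{\mu+1}{\mu}\to+\infty$ as $\mu\to0^+$, the optimizer lies in the interior of the transportation polytope, and Lagrange duality forces the product form $z_{ij}=x_iy_j/(1-x_iy_j)$, $x_iy_j<1$, with dual variables attached to $r_i,c_j$ equal to $-\log x_i,-\log y_j$. The instance is invariant under permuting the heavy (resp.\ light) indices and under transposition, so by uniqueness of $Z$ one may take $x_i=y_i\in\{p,q\}$, with $p$ the common value on the $\alpha$ heavy indices and $q$ on the $n$ light ones. Then the heavy--heavy, heavy--light and light--light entries of $Z$ are $A=p^2/(1-p^2)$, $D=pq/(1-pq)$, $E=q^2/(1-q^2)$, and $(p,q)$ is pinned down by $\alpha A+nD=R$ and $\alpha D+nE=c_0$.

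Next I would analyze this $2\times 2$ system as $n\to\infty$; this is where $B_c$ appears. From $\alpha A+nD=R$ and $A\ge0$ we get $D\le R/n\le BC$, so $D$ is bounded and $\alpha D=O(n^{\de})=o(n)$; hence $nE=c_0-\alpha D$ gives $E=C\bigl(1+O(n^{\de-1})\bigr)$, i.e.\ $q\to q_0:=\sqrt{C/(C+1)}$ at that rate, and a direct computation gives $q_0/(1-q_0)=C+\sqrt{C^2+C}=CB_c$. If $B<B_c$ then $\limsup_n p<1$ --- otherwise along a subsequence $D=pq/(1-pq)\to q_0/(1-q_0)=CB_c>BC$, contradicting $D\le BC$; so $A=\Theta(1)$, whence $\alpha A=o(n)$, $D\to BC$, $pq\to BC/(1+BC)$, $p\to p_*:=BC/\bigl((1+BC)q_0\bigr)$, and one checks $p_*<1\Leftrightarrow B<B_c$, so $z_{11}=A\to p_*^2/(1-p_*^2)=\Theta(1)$. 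If $B>B_c$ there is no such solution, so $p\to1$; then $D\to q_0/(1-q_0)=CB_c$, and $\alpha A=R-nD$ gives $z_{11}=A=\bigl(BC-CB_c+o(1)\bigr)\,n/\alpha=C(B-B_c)\,n^{1-\de}\bigl(1+o(1)\bigr)=\Theta(n^{1-\de})$.

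Then I would transfer $z_{11}$ to the actual expectation $\E[X_{11}]$. Decreasing the $(1,1)$ entry by $k$ is a bijection from $\{X_{11}\ge k\}$ onto $\M(\r^{(k)},\c^{(k)})$, where $\r^{(k)},\c^{(k)}$ agree with $\r,\c$ except that their first entry is $R-k$; hence $\E[X_{11}]=\sum_{k\ge1}|\M(\r^{(k)},\c^{(k)})|\,/\,|\M(\r,\c)|$. Feeding in an asymptotic enumeration formula of Barvinok--Hartigan type (or the sharper local-limit estimates one would develop for this regime), $\log|\M(\r',\c')|=\mathcal H\bigl(Z(\r',\c')\bigr)+(\text{lower-order terms, slowly varying in }(\r',\c'))$, and using that the dual variables at $r_1,c_1$ both equal $-\log p$, a second-order Taylor expansion gives $\log|\M(\r^{(k)},\c^{(k)})|-\log|\M(\r,\c)|=-2k\log(1/p)+O(k^2h)$, where $h$ is the relevant second derivative of $\mathcal H$ in the perturbation direction. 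Summing the resulting geometric-type series yields $\E[X_{11}]=\dfrac{p^2}{1-p^2}\,(1+o(1))=z_{11}\,(1+o(1))$, and the two regimes of the previous paragraph finish the proof.

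The hard part is the uniformity in this last step, which is most delicate in the supercritical phase. There $p\to1$, so $p^2$ approaches the radius of convergence $1$ of the underlying geometric model, and the series over $k$ has effective length $\asymp 1/(1-p^2)\asymp n^{1-\de}$; one must show that the relative error of the enumeration formula and the second-order term $k^2h$ are both $o(1)$ \emph{uniformly} over $k\lesssim n^{1-\de}$ and over the corresponding margin perturbations. It is exactly this uniform control --- which is where the hypothesis $1/2<\de<1$ is consumed --- that constitutes the technical core of Theorem~\ref{thm:main_expectation}; granting it, the rest is the routine computation above. (The subcritical case $B<B_c$ is much easier, since there $z_{11}=\Theta(1)$ and only $O(1)$ terms of the series matter.)
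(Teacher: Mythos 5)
Your analysis of the typical table is correct and essentially reproduces the paper's Lemma~5.1 and Proposition~5.2, in the equivalent parametrization $p=1/P$, $q=1/Q$: the bound $D\le BC$ forcing $q\to q_0=\sqrt{C/(C+1)}$, the contradiction argument ruling out $p\to1$ when $B<B_c$, and the computation $q_0/(1-q_0)=CB_c$ all check out. (One stylistic note: the paper's Proposition~5.2 gets the sharp constant via an explicit optimization over $w_{12}/w_{22}$ rather than the subsequential contradiction you use, but both land on the same answer.)

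The serious problem is the transfer step, and you have misidentified both the mechanism and the bottleneck. You propose writing $\E[X_{11}]=\sum_{k\ge1}|\M(\r^{(k)},\c^{(k)})|/|\M(\r,\c)|$ and feeding in a precise asymptotic enumeration formula, ``granting'' uniform control of its error. But no such formula is available in this margin regime, and the paper explicitly says so: the only estimate it invokes is Barvinok's two-sided bound (Theorem~\ref{thm:Barvinok_summary}), $N^{-\gamma(m+n)}e^{g(Z)}\le|\M(\r,\c)|\le e^{g(Z)}$, whose multiplicative slack is $N^{\gamma(m+n)}=e^{\Theta(n\log n)}$. In the supercritical phase the signal you need to detect, $g(Z^{(k)})-g(Z)\approx -2k\log(1/p)$ with $1-p\asymp n^{\delta-1}$ and $k\lesssim n^{1-\delta}$, is $O(1)$ — completely drowned by the $e^{\Theta(n\log n)}$ uncertainty in each ratio. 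So ``granting'' the needed uniformity is granting something that is neither available in the literature nor proved in this paper, and your claim that this ``constitutes the technical core of Theorem~\ref{thm:main_expectation}'' is not accurate: the theorem's proof never estimates $|\M|$ to that precision.

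What the paper actually does is the concentration-transfer argument of Lemma~\ref{lemma:key}: by Barvinok's Theorem~\ref{thm:Barvinok_summary}~(ii)--(iii), the independent geometric matrix $Y$ with means $z_{ij}$, conditioned to land in $\M(\r,\c)$, is exactly uniform, and the conditioning event has probability $\ge N^{-\gamma(m+n)}$; since each block contains many exchangeable coordinates, the empirical frequency $S(Y)$ of any one-dimensional event concentrates (Azuma--Hoeffding) with failure probability $e^{-|\B_1|t^2/2k}$, which beats the $N^{\gamma(m+n)}$ penalty as soon as $|\B_1|\gg n\log n$. This yields a TV bound $d_{TV}(X_{11},\Geom(z_{11}))\le n^{1/2-\delta+\eps}$ for $\delta>1/2$ — and \emph{this} is where $\delta>1/2$ is consumed: the top-left block has $n^{2\delta}$ entries, and $2\delta>1$ is needed for concentration to overcome the counting slack. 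Finally, instead of the direct series you write, the paper converts the TV bound to a truncated-mean bound (Propositions~\ref{prop:geo_truncation}--\ref{prop:TV_expectation}, applied to $n^{\delta-1}X_{11}$), then removes the truncation using the deterministic margin identity $\sum_j \ov X_{1j}=0$ and the nonnegativity of the truncation remainders (Proposition~\ref{prop:first_moment_error_bound}). That combination is the actual technical core, and it circumvents precise enumeration entirely.
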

	
	In the next section we present various extensions and refinements of this
	result, including determining the precise constants implied by the $\Theta$ notation.
	We also extend the results to $0<\de \le 1/2$, although our results are not
	as strong in this case.
	
	The story behind the phase transition in the corollary is quite interesting.
	For $\de=0$, $B=3$ and $C=1$ the phenomenon of large $X_{11}$ entry was first observed
	by Barvinok in~\cite[$\S$1.5]{B3} for the (non-uniform) distribution of \emph{``typical''
		contingency tables}.
	In fact, Barvinok showed that for $\de=0$ and $C=1$, the phase transition for
	typical tables happens at  \ts  $B_c=1+\sqrt{2}$\ts, see~\cite{B3,B4}.
	
	In~\cite{DP}, the authors tested empirically uniform contingency tables using
	a new MCMC algorithm introduced in~\cite{DP0}, and the experiments seem to confirm
	Barvinok's conjectured
	value for the critical~$B_c$.  In fact, the simulations show drastically different
	behavior for the subcritical $B<B_c$ vs.\ supercritical $B>B_c$ cases.  Here we analyze
	the $X_{11}$ entry in a random uniform $n\times n$ contingency table with both
	margins \ts $(B\ts n,n,\ldots,n)$, see Figure~\ref{fig:Barv-sim}.  This is the
	case $C=1$, where \ts $B_c=1+\sqrt{2} \approx 2.41$.
\begin{figure}[hbt]
		\begin{center}
			\includegraphics[width = 0.4 \textwidth]{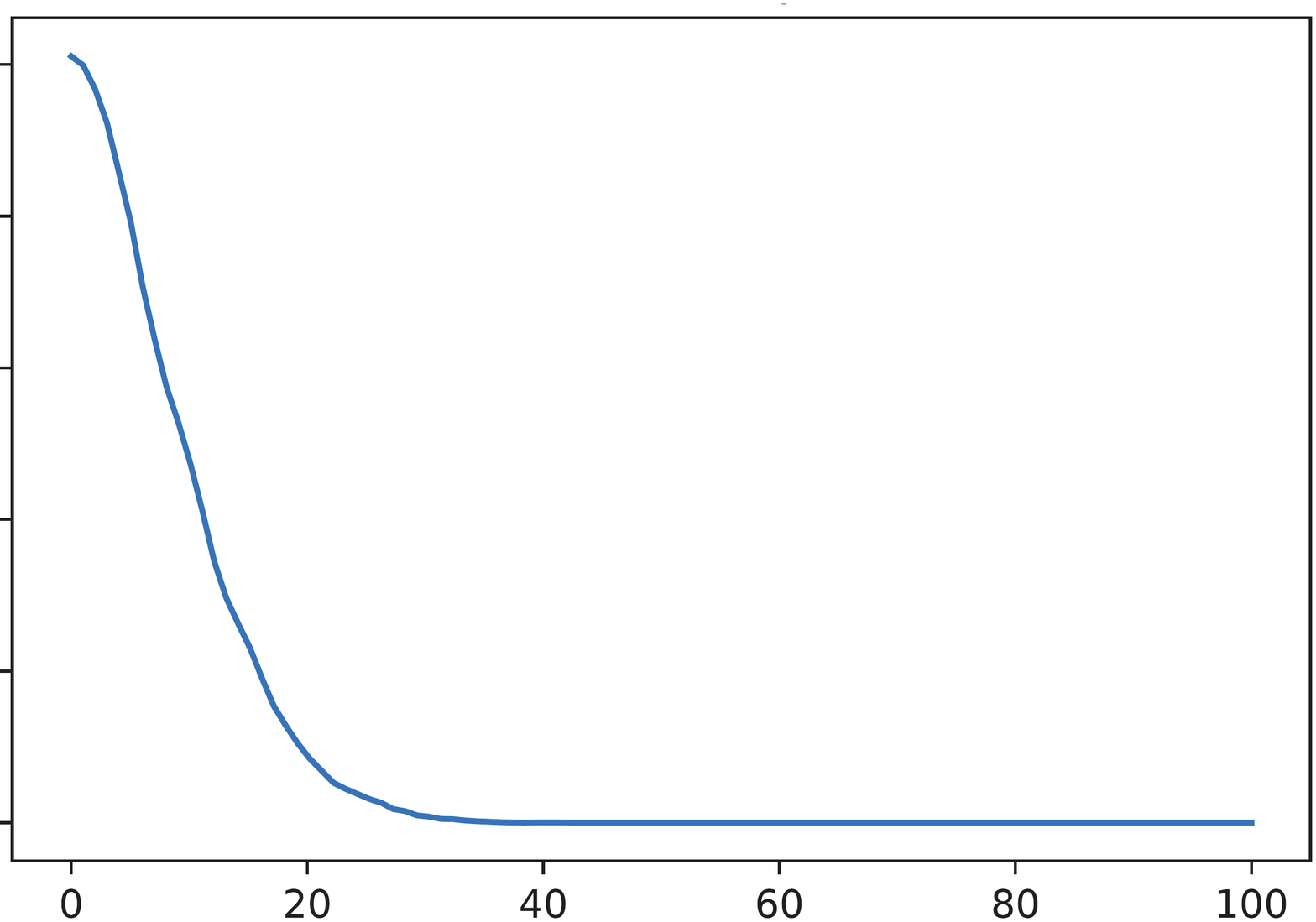} \qquad \ \
			\includegraphics[width = 0.4 \textwidth]{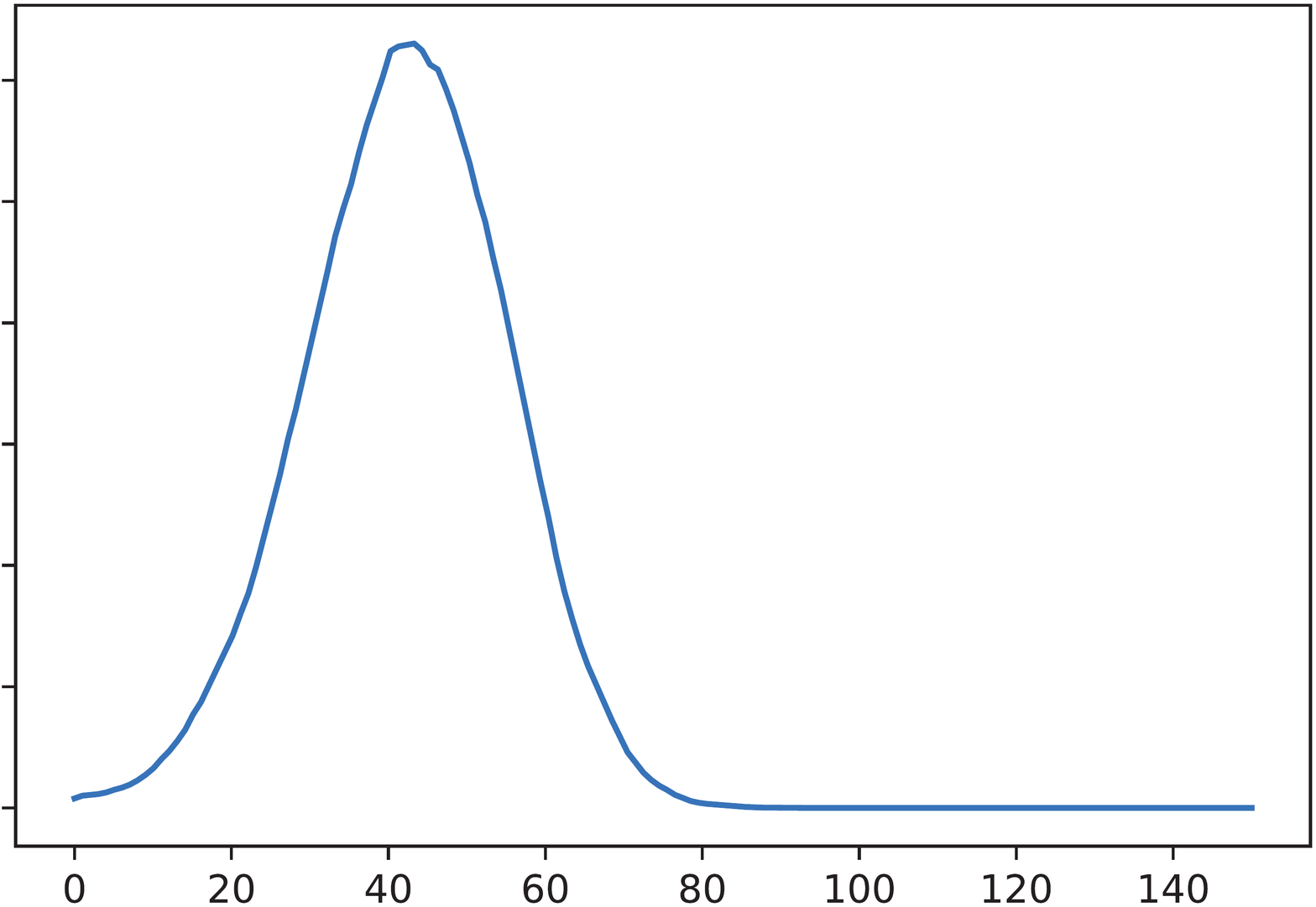}	
		\end{center}
		\caption{Summary of $10^4$ simulations of $X_{11}$ entry in a random uniform $n\times n$ contingency table $X=(X_{ij})$
			with both margins \ts $(B\ts n,n,\ldots,n)$, where $n=50$.  The left graph corresponds to a subcritical value $B=2$,
			and the right graph to a supercritical value $B=3$.}
		\label{fig:Barv-sim}
\end{figure}
%
%
In some sense, the simulations
	showed a sharper phase transition than the typical matrices: not only the expectation
	\ts $\E[X_{11}]$ \ts exhibited jump from bounded to having linear growth, but the
	distribution of $X_{11}$ switches from geometric in the subcritical case to
	normal in the supercritical case.

	This paper gives the first rigorous proof of the
	phase transition in the uniform case.  Although we do not cover the $\de=0$ case
	introduced by Barvinok, we conjecture the phase transition extends to this case.
	In fact, our results go beyond what the simulations in~\cite{DP} suggest, as we
	interpolate between $\de=0$ and (uniform) $\de=1$ case.  Rather surprisingly,
	we show that for $\de>1/2$ the behavior of random uniform and typical matrices
	remains similar, with a geometric distribution in the supercritical case.
	We conjecture that there is an additional phase transition at $\de=1/2$, and
	for $\de<1/2$ the distribution of $X_{11}$ is normal in the supercritical case
    (see Conjecture~\ref{conj:Gaussian}).
	In the limiting case $\de=0$, this is supported by the simulations mentioned above.
	Further conjectures with more refined estimates are given in Section~\ref{s:conj}.
	
	\medskip
	
	\subsection{Background}
	
	Let $\mathcal{P}(\r,\c)\subseteq \R_+^{mn}$ be the \textit{transportation polytope}
	of \emph{real} nonnegative contingency tables with margins $\r$ and~$\c$, i.e.\ defined by~\eqref{eq:def_contingency_set}
	over~$\rr_+$.  Clearly, $\M(\r,\c) = \mathcal{P}(\r,\c) \cap \zz^{mn}$.
	When $r_1=\ldots=r_n=c_1=\ldots=1$, $m=n$, we have $\mathcal{P}(\r,\c)$ is the classical \emph{Birkhoff polytope},
	of interest in Combinatorics, Discrete Geometry, Combinatorial Optimization and Discrete Probability,
	see e.g.~\cite{DK,Pak}. The asymptotic behavior of the \ts vol$\bigl(\mathcal{P}(\r,\c)\bigr)$ \ts
	is known~\cite{CM-birk},  as well as the exact value and the whole Ehrhart polynomial for $n\le 10$,
	see~\cite{BP}, and numerical estimates for $11\le n\le 14$~\cite{CV}. Such sharp volume estimates were crucially used in~\cite{CDS} to analyze the asymptotic behavior of random contingency table~$X$ for uniform margins.
	
	For non-uniform margins the existing sharp asymptotic results cover only \emph{smooth margins}, a technical condition which
	includes the case when all ratios $r_i/r_k$ and $c_j/c_\ell$ are bounded, see~\cite{BLSY,BBK,BC,CM} for precise statements. 
For general margins $\r$ and~$\c$, the upper and lower bounds on $|\mathcal{M}(\r,\c)|$ were given in \cite{B1, B3} (see also Theorem~\ref{thm:Barvinok_summary}). The proofs of our main results rely heavily on these bounds.

	Now, in statistics, a popular practice is to sample $Y$ from the \emph{hypergeometric} (Fisher--Yates) \emph{distribution}
	defined as follows:
	\begin{align}\label{eq:Fisher_Yates}
	\P\bigl(Y=(y_{ij})\bigr) \, := \, \phi(\r,\c) \ts\cdot\. \prod_{i=1}^{m}\.\prod_{j=1}^{n} \. \frac{1}{y_{ij}!}\,,
	\quad  \text{where} \quad\phi(\r,\c)\, = \, \frac{1}{N!} \, \prod_{i=1}^{m} \. r_i! \, \prod_{j=1}^{n} \. c_j!
	\end{align}
	and \ts $N=r_1+\ldots+r_m=c_1+\ldots+c_n$ denotes the total sum of a contingency table in $\M(\r,\c)$.
	We refer to~\cite{DE,Ev} for an extensive discussion and to~\cite{FLL,Kat} for the recent treatment.
	
	The rationale behind this approach lies in the \textit{independence table} \ts
	$W=(w_{ij})\in \mathcal{P}(\r,\c)$, defined by \ts
	$w_{ij} := (r_i\ts c_j)/N$.  This table~$W$ gives both the expectation of the
	Fisher--Yates distribution and is also the unique maximizer of the following
	strictly concave function
	\begin{align}\label{eq:def_H_population_entropy}
	\rH(W) \. = \, \sum_{i,j} \. \frac{w_{ij}}{N} \. \log \frac{N}{w_{ij}}
	\end{align}
	in the transportation polytope $\mathcal{P}(\r,\c)$, see~\cite[Ex.~(iv)]{Good}.
	Note that for each $Q\in \mathcal{P}(\r,\c)$, if we view $Q/N$ as the `population contingency table',
	where each entry $q_{ij}/N$ is understood as the marginal probability of the entry~$(i,j)$,
	then $\rH(W)$ defined at~\eqref{eq:def_H_population_entropy} is the entropy of the probability
	mass function~$Q/N$. However, it is known that the hypergeometric distribution may not properly
	capture the behavior of the uniform contingency table $X\in \mathcal{M}(\r,\c)$.
	
	When one tries to find the marginal distribution for $Y_{ij}$ that maximizes the overall entropy subject to the margin condition, instead of viewing each contingency table as a rescaled probability mass function, one finds that the entries must be independent and geometrically distributed. Furthermore, one can further maximize the entropy by optimizing the mean of each entry. This leads to the notion of the \textit{typical table} $Z=(z_{ij})\in \mathcal{P}(\r,\c)$ (see Definition~\ref{def:typical_table}), introduced by Barvinok in~\cite{B1} and further exploited in~\cite{B2,B3,BH}. The behavior of typical and independence tables are known to be similar when the margins are relatively uniform~\cite{BLSY}, but could be drastically different when the margin are strongly asymmetric~\cite[$\S$1.6]{B3}.
	
	In~\cite{B3}, Barvinok showed that there exists a phase transition in the behavior of the typical table for a simple model of contingency tables with asymmetric margins. Namely, let $Z=(z_{ij})$ be the typical table for $\M(\r,\c)$ where \ts
	$\r=\c \ts := \ts (\lfloor BCn\rfloor, \lfloor Cn\rfloor,\ldots,\lfloor Cn\rfloor)\in \nn^{n+1}$. For $B=1$, all entries of $Z$ are equal by the symmetry. In particular, the corner entry $z_{11}$ is bounded by $C$ for all $n$. On the other hand, Barvinok~\cite{B3} showed that for $B>1+\sqrt{2}$, the entry $z_{11}$ has linear growth
	\begin{align}
	z_{11} \. \ge \. (B-1-\sqrt{2})\ts n \quad \text{for all \ \. $n\ge 1$},
	\end{align}
	while all the other entries of $Z$ are uniformly bounded by \ts $1+\sqrt{2}$. Hence, as~$B$ passes a certain critical value $\le 1+\sqrt{2}$, the ``mass'' within the typical table $Z$ suddenly concentrates at the corner entry~$z_{11}$.  As we mentioned earlier, this was the starting
	observation of the paper.
	

	\subsection{Notation}
	
	We use \ts $\nn=\{0,1,2,\ldots\}$ \ts and \ts $\rr_+ = \{x\in \rr, \ts x\ge 0\}$.
	For all \ts $a,b\in \mathbb{R}$, denote \ts $a\land b:=\min(a,b)$ \ts and \ts $a^{+}:=\max(a,0)$.
	
	For all $\lambda>0$, we write $Y\sim \Geom(\lambda)$  for a discrete random variable $Y$
	with probability mass function\footnote{This notation is somewhat nonstandard, but
		is more convenient for our purposes.}
	\begin{align}
	\hspace{2cm} \P\bigl(Y=k\bigr) \. := \. \left( \frac{1}{1+\lambda} \right)
	\left( \frac{\lambda}{1+\lambda} \right)^{k}\.,\quad \text{for all} \ \ k\in \nn\ts.
	\end{align}
	Note that $\E[Y]=\lambda$. We call $Y$ a \emph{geometric random variable}
	with mean~$\lambda$. For every two probability distributions $\mu_{1}$, $\mu_{2}$
	over a countable sample space $\Omega$, the \textit{total variation distance} is defined as
	\begin{align}
	d_{TV}(\mu_{1},\mu_{2}) \. := \,\. \sum_{x\in \Omega} \, \bigl|\mu_{1}(x) - \mu_{2}(x)\bigr|\ts.
	\end{align}
	Let $X$ and $Y$ be random variables with distribution $\mu_{1}$ and $\mu_{2}$, respectively.
	To simplify the notation, we write:
	\begin{align}
	d_{TV}(\mu_{1},\mu_{2}) \. = \. d_{TV}(X,\mu_{2}) \. = \. d_{TV}(\mu_{1},Y) \. = \. d_{TV}(X,Y)\ts.
	\end{align}
	
	\vspace{0.2cm}
	
	\section{Statement of results}
	
	For parameters $n\ge 1$, $0\le \delta \le 1$, and $B,C\ge 0$, let $\mathcal{M}_{n,\delta}(B,C)=\mathcal{M}(\r,\c)$, where
	\begin{align}\label{eq:def_barvinok_margin}
	\r\. = \. \c \. := \. \bigl(\lfloor BCn \rfloor ,\ldots,\lfloor BCn\rfloor ,\lfloor Cn \rfloor, \ldots, \lfloor Cn \rfloor \bigr)
	\, \in \, \nn^{\lfloor n^{\delta} \rfloor \ts +\ts n}\ts.
	\end{align}
	In other words, $\mathcal{M}_{n,\delta}(B,C)$ \ts is the set of contingency tables whose first $\lfloor n^{\delta} \rfloor $ rows and columns have margin $\lfloor BCn \rfloor$ and the other $n$ rows and columns have margin $\lfloor Cn \rfloor$, see Figure~\ref{fig:CT}. Let $X=(X_{ij})$ be the random contingency table sampled uniformly from $\mathcal{M}_{n,\delta}(B,C)$. We are interested in the asymptotic behavior of the entry $X_{ij}$ as $n\rightarrow \infty$ for various choice of parameters \ts $\delta,\ts B$ and~$C$. Note that the entries within each of the four blocks in Figure~\ref{fig:CT} have the same distribution by the symmetry. Hence it is sufficient to restrict the attention to the entries $X_{11}$, $X_{1,n+1}$, and $X_{n+1,n+1}$.
	
	We establish a sharp phase transition at
	$$B_{c}:=1+\sqrt{1+1/C}$$
	for the limiting expectation of the entries of~$X$. The following theorem shows that the limiting distribution of each entry of $X$ is geometric with mean depending on whether $B<B_{c}$ or $B>B_{c}$; see Figure~\ref{fig:CT_phase}.
	
	\begin{theorem}\label{thm:main_geo}
		Fix constants $\delta,B,C>0$, and let $X=(X_{ij})$ be sampled from $\M_{n,\delta}(B,C)$ uniformly at random.
		Fix $\eps>0$ and let $B_c$ be as above.
		\begin{description}
			\item[(i)] {\text{\rm [bottom right]}} \. For all $0\le \delta\le 1$ and $B,C>0$, we have:
			\begin{align}
			d_{TV}\bigl(X_{n+1,n+1}, \Geom(C)\bigr) \. = \. O\bigl(n^{\delta-1} + n^{-1/2+\eps}\bigr).
			\end{align}
			
			\item[(ii)] {\text{\rm [sides]}} \. For all $0<\delta\le 1$ and $B\ne B_{c}$,  we have:
			\begin{align}
			d_{TV}\bigl(X_{1,n+1}, \Geom(R)\bigr) \. = \.  O\bigl(n^{\delta-1} + n^{-\delta/2+\eps}\bigr),
			\end{align}
			where
			\begin{align}
			R \. = \. \begin{cases}
			BC & \text{\, if \. $B<B_{c}$\,,} \\
			B_{c}C & \text{\, if \. $B>B_{c}$\,.}
			\end{cases}
			\end{align}
			
			\item[(iii)] {\text{\rm [top left]}} \.	For all $1/2<\delta< 1$ and $B\ne B_{c}$, we have:
			\begin{align}
			d_{TV}\bigl(X_{11}, \Geom(R)\bigr)  \. = \.   O\bigl(n^{\delta-1} + n^{1/2 - \delta + \eps}\bigr),
			\end{align}
			where
			\begin{align}
			R \. = \.
			\begin{cases}
			\frac{B^{2}(1+C)}{(B_{c}-B)(B+B_{c}-2)} & \text{\, if \. $B<B_{c}$\,,} \\
			C(B-B_{c})n^{1-\delta} + O(1) & \text{\, if \. $B>B_{c}$\,.}
			\end{cases}
			\end{align}
		\end{description}
	\end{theorem}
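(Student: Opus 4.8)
The plan is to reduce the law of each entry of $X$ to a question about the \emph{typical table} of $\M_{n,\delta}(B,C)$ and then to apply the sharp asymptotic enumeration of contingency tables with smooth margins. \emph{Step 1 (the typical table and the critical value).} I would first determine the typical table $Z=(z_{ij})$ from its product form $z_{ij}=x_iy_j/(1-x_iy_j)$ (Definition~\ref{def:typical_table}). By the block symmetry the multipliers take only two values $a,b$ — $a$ on the $\lfloor n^\delta\rfloor$ large rows and columns, $b$ on the $n$ small ones — so $Z$ has the three distinct entries
$$P=\frac{a^2}{1-a^2},\qquad Q=\frac{ab}{1-ab},\qquad R=\frac{b^2}{1-b^2},$$
subject to $\lfloor n^\delta\rfloor P+nQ=\lfloor BCn\rfloor$ and $\lfloor n^\delta\rfloor Q+nR=\lfloor Cn\rfloor$. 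Solving these asymptotically, $R=C-\Theta(n^{\delta-1})$ in every case, while the behaviour of $P$ and $Q$ splits according to whether $a$ stays bounded away from $1$: if $B<B_c$ one gets $Q\to BC$ and $P\to B^2(1+C)/\bigl((B_c-B)(B+B_c-2)\bigr)$, whereas if $B>B_c$ one gets $a\to1$, hence $Q\to B_cC$ and $P=C(B-B_c)n^{1-\delta}+O(1)$. The transition occurs precisely where the elementary identity $(B_c-B)(B+B_c-2)=(1+2BC-B^2C)/C$ changes sign, that is at $B_c=1+\sqrt{1+1/C}$; the $O(1)$ refinement of $P$ comes from carrying the expansion one more order, using $Q=B_cC+O(n^{\delta-1})$ in the first margin equation.

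\emph{Step 2 (reduction to a near-uniform sub-table).} Let $\mathcal{B}$ be the block containing the entry in question: the bottom-right $n\times n$ block for (i), the top-right $\lfloor n^\delta\rfloor\times n$ block for (ii), the top-left $\lfloor n^\delta\rfloor\times\lfloor n^\delta\rfloor$ block for (iii). Conditioned on the restriction $X|_{\mathcal{B}^c}$, the sub-matrix $X|_{\mathcal{B}}$ is uniform over the nonnegative integer matrices of that shape with the induced margins $r_i'=r_i-\sum_{j\notin\mathcal{B}}X_{ij}$ and $c_j'=c_j-\sum_{i\notin\mathcal{B}}X_{ij}$. The point is that, with high probability over $X|_{\mathcal{B}^c}$, each induced margin equals the deterministic value predicted by $Z$ up to a fluctuation of lower order than its mean, so the induced margins are \emph{near-uniform} — all row (resp.\ column) sums of $\mathcal{B}$ agree within a factor $1+o(1)$ — which is exactly the regime in which the entry asymptotics of Step~3 apply. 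Here Barvinok's bounds (Theorem~\ref{thm:Barvinok_summary}) are used: since they pin $|\M_{n,\delta}(B,C)|$ down only up to a factor exponential in the number of rows and columns, they cannot be differenced to extract an entry distribution directly, but they do suffice to force the block sums $S_{\mathrm{TL}}$, $S_{\mathrm{TR}}=S_{\mathrm{BL}}$, $S_{\mathrm{BR}}$ to concentrate near $n^{2\delta}P$, $n^{1+\delta}Q$, $n^2R$ and to supply the tail bounds needed for a union bound over the at most $n+\lfloor n^\delta\rfloor$ rows and columns of $\mathcal{B}$.

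\emph{Step 3 (entry asymptotics on the block, and assembly).} For near-uniform margins the sharp enumeration results of Canfield--McKay \cite{CM}, in the quantitative form used in \cite{CDS}, give that a single entry of a uniform table on $\mathcal{B}$ is within total variation distance $O(\eps_{\mathcal{B}})$ of a geometric distribution with mean the independence-table value $r_i'c_j'$ divided by the block sum, where the error $\eps_{\mathcal{B}}$ is governed by the shape of $\mathcal{B}$ and the size of its typical entry: $\eps_{\mathcal{B}}=n^{-1/2+\eps}$ for the bottom-right $n\times n$ block (entries $\Theta(1)$), $\eps_{\mathcal{B}}=n^{-\delta/2+\eps}$ for the top-right $\lfloor n^\delta\rfloor\times n$ block (entries $\Theta(1)$), and $\eps_{\mathcal{B}}=n^{1/2-\delta+\eps}$ for the top-left $\lfloor n^\delta\rfloor\times\lfloor n^\delta\rfloor$ block once its entries are of size $\Theta(n^{1-\delta})$ in the supercritical regime (in the subcritical regime the entries are $\Theta(1)$ and the error is the smaller $n^{-\delta/2+\eps}$). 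Inserting Step~2, the geometric mean becomes the deterministic $R$, $Q$, or $P$ of Step~1, the discrepancy and the effect of averaging over $X|_{\mathcal{B}^c}$ being subdominant to the $n^{\delta-1}$ term that already measures how far $R$, $Q$, $P$ lie from $C$, $BC$ (or $B_cC$), and the value asserted in the theorem. Collecting terms gives the three stated bounds, and the appearance of $n^{1/2-\delta}$ — finite and $o(1)$ only for $\delta>1/2$ — is the source of the hypothesis in part (iii).

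\emph{Main obstacle.} The hard part is Step~2: converting Barvinok's purely global estimate for $|\M_{n,\delta}(B,C)|$ into concentration of the block sums, and then into near-uniformity of \emph{every} induced row and column margin of $\mathcal{B}$, with probability good enough to survive the union bound — this is where the precise form of Theorem~\ref{thm:Barvinok_summary} must be exploited. The remaining technical points are extracting the $O(1)$ correction to $\E[X_{11}]$ from the second-order term of the typical-table expansion, and checking that the smooth-margin enumeration results genuinely apply, with the claimed error rates, to the strongly rectangular block ($\lfloor n^\delta\rfloor\times n$) and to the top-left block whose entries can be as large as $n^{1-\delta}$.
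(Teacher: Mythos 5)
Your Step~1 (the Lagrange-multiplier analysis of the typical table, the phase transition at $B_c$, and the $O(n^{\delta-1})$ rates) is essentially the paper's Lemma~\ref{lemma:typical_corner} and Proposition~\ref{prop:z_11_subcritical}, and is correct. Steps~2--3, however, take a genuinely different route from the paper, and the route you sketch has a gap at exactly the place you flag.

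The paper does \emph{not} condition on $X|_{\mathcal{B}^c}$ and does not invoke Canfield--McKay or the quantitative smooth-margins enumeration at all. Its core device is Lemma~\ref{lemma:key}: from Theorem~\ref{thm:Barvinok_summary}(ii)--(iii), the uniform table $X$ is the independent geometric matrix $Y$ (with means $z_{ij}$) conditioned on an event of probability $\geq N^{-\gamma(m+n)}$; so any deviation probability for $Y$ transfers to $X$ at a polynomial cost. One then exploits exchangeability inside a block: $\P(X_{I}\in A)=\E[S(X)]$ where $S(\cdot)$ is the empirical frequency of $A$ over $\geq\lfloor|\mathcal{B}_1|/k\rfloor$ disjoint cells of the block, and $S(Y)$ concentrates around $\P(Y_I\in A)$ by Azuma--Hoeffding. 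Balancing $t$ against $N^{\gamma(m+n)}\exp(-|\mathcal{B}_1|t^2/2k)$ yields precisely the exponents $\eta(\delta)\in\{1/2,\ \delta/2,\ \delta-1/2\}$ as functions of the \emph{block size alone}; this is Theorem~\ref{thm:geo_approximation}. The final TV bounds then follow by the triangle inequality, comparing $\Geom(z_{ij})$ to $\Geom(R)$ via the elementary Proposition~\ref{prop:TV_geom_0} and the $O(n^{\delta-1})$ rates from Step~1.

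Your Step~2 is where the proposal breaks down, as you acknowledge. To run your argument you need simultaneous near-uniformity of all $\sim n$ induced margins of $\mathcal{B}$, with high enough probability for a union bound, and you need it \emph{before} you know anything about the fluctuations of individual entries of $X$ --- but controlling those margin fluctuations from Barvinok's purely multiplicative-accuracy enumeration bound is essentially as hard as the original problem and would most naturally be done by an argument of the paper's Lemma~\ref{lemma:key} type anyway. You would also need a quantitative total-variation version of the CM/CDS entry asymptotics with the specific error exponents you assert; the paper explicitly remarks (after Conjecture~\ref{conj:CLT}) that it \emph{cannot} reach CDS-level sharpness because it is limited to Barvinok's loose bounds, so such a quantitative result is not available to cite. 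Finally, your heuristic that the error is governed by ``block shape and entry magnitude'' produces an internal inconsistency: you claim the top-left block has error $n^{-\delta/2+\eps}$ in the subcritical regime and $n^{1/2-\delta+\eps}$ in the supercritical regime, whereas the paper's exponent $\eta(\delta)=\delta-1/2$ for the top-left block depends only on the block size $|\mathcal{B}_{\mathrm{TL}}|=\lfloor n^\delta\rfloor^2$, not on whether $z_{11}$ is $\Theta(1)$ or $\Theta(n^{1-\delta})$, and the paper makes no claim of an improved subcritical rate. If one wanted to pursue your route, the missing ingredient is a self-contained concentration argument for the induced margins of a conditioned block --- but having that in hand, the paper's direct transfer argument is both shorter and sharper.
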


	\begin{figure}[hbt]
		\begin{center}
			\includegraphics[width = 0.7 \textwidth]{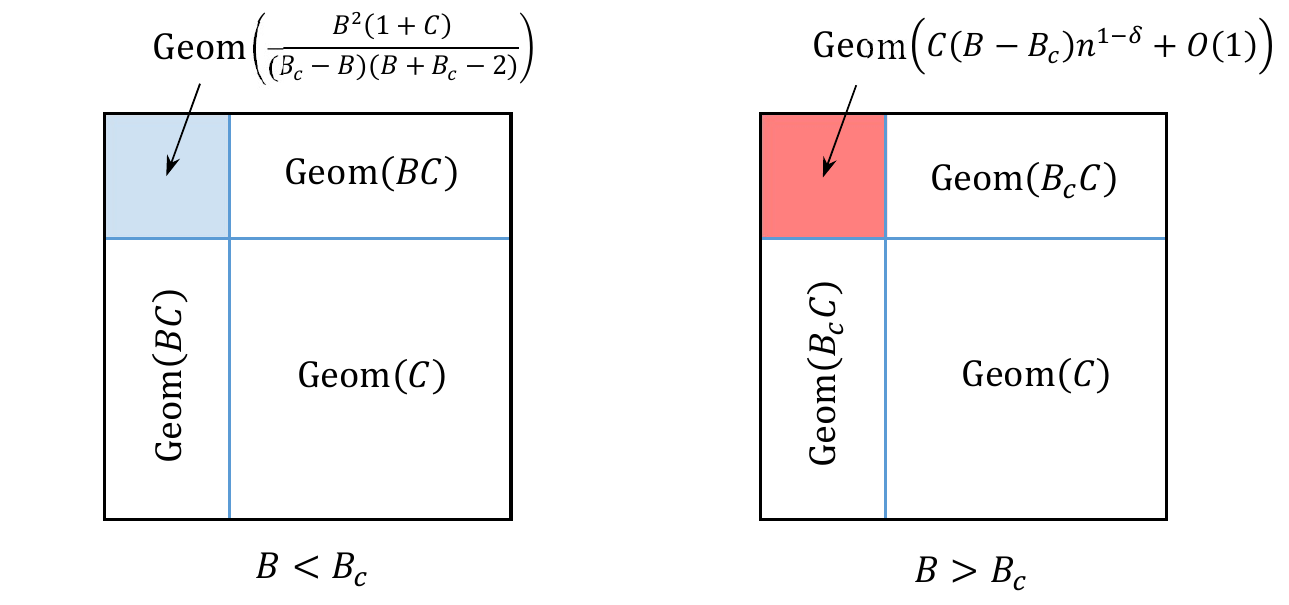}
		\end{center}
		\caption{ Limiting distributions of the entries in the uniform contingency table $X\in \mathcal{M}_{n,\delta}(B,C)$ in the subcritical $B<B_{c}$ (left) and supercritical $B>B_{c}$ (right) regimes for thick bezels $1/2<\delta<1$.
		}
		\label{fig:CT_phase}
	\end{figure}

	Our second result proves the phase transition of entries of random contingency tables $X\in \mathcal{M}_{n,\delta}(B,C)$ in expectation.

	\begin{theorem}\label{thm:main_expectation}
		Fix constants $\delta,B,C>0$, and let \ts $X=(X_{ij})$ \ts be sampled from \ts $\M_{n,\delta}(B,C)$ \ts
		uniformly at random. Let $B_c$ be as above.
		\begin{description}
			\item[(i)] {\text{\rm [bottom right]}} \.  For all $0\le \delta\le 1$, $B,C>0$, and $\eps>0$, we have:
			\begin{align}
			\bigl|\E[X_{n+1,n+1}] - C\bigr| \. = \. O(n^{\delta-1+\eps})\ts.
			\end{align}
			
			\item[(ii)] {\text{\rm [sides]}} \. For all $\alpha,\eps>0$, we have:
			\begin{align}\label{eq:thm2_ex_2_1}
			\begin{cases}
			\bigl| \E[X_{1,n+1}] - BC\bigr| \. = \. O\bigl(n^{\delta-1} + n^{-(\delta/2)+\eps}\bigr) & \text{\, if \. $B<B_{c}$ and $0<\delta<1$}\ts, \\
			\bigl|\E[X_{1,n+1}\land n^{\alpha}]  \. - \. B_{c}C\bigr|\. = \. O\bigl(n^{\delta-1} + n^{\alpha-(\delta/2)+\eps}\bigr) & \text{\, if \. $B>B_{c}$ and $0<\delta<1$}\ts, \\
			\bigl| \E[X_{1,n+1}] \. - \. B_{c}C \bigr| \. = \. O\bigl(n^{(1/2)-\delta+\eps}\bigr)  & \text{\, if \. $B>B_{c}$ \. and \. $1/2<\delta<1$}\ts.
			\end{cases}
			\end{align}
			
			\item[(iii)] {\text{\rm [top left]}} \.	For all $\alpha,\eps>0$, we have:
			\begin{align}\label{eq:thm2_ex_2_2}
			\begin{cases}
			\bigl| n^{\delta-1} \E[X_{11}] \bigr|  \. = \. O\bigl(n^{\delta-1} + n^{-\delta+\eps}\bigr) & \text{\, if \. $B<B_{c}$ \. and \. $0<\delta<1$}\ts, \\
			\left| \E[X_{11}\land n^{\alpha}] \. -  \. \frac{B^{2}(1+C)}{(B_{c}-B)(B+B_{c}-2)}\right|  \. = \. O\bigl(n^{\alpha + (1/2)- \delta + \eps}\bigr)  & \text{\, if \. $B<B_{c}$ \. and \. $1/2<\delta<1$\ts,}
			\end{cases}
			\end{align}
			and
			\begin{align}\label{eq:thm_E_iii}
			\begin{cases}
			\left|C(B-B_{c}) \ts - \ts n^{\delta-1} \E[X_{11}] \ts - \ts \E\bigl[(X_{1,n+1} -  n^{\alpha})^{+}\bigr]\ts\right|
			\. = \. O\bigl(n^{\delta-1} + n^{\alpha-(\delta/2)+\eps}\bigr) & \text{\, if \. $B>B_{c}$ \. and \. $0<\delta<1$}\ts,\\
			\left|  C(B-B_{c}) \ts - \ts n^{\delta-1} \E[X_{11}]\right|  \.=\. O\bigl(n^{(1/2)-\delta+\eps}\bigr) & \text{\,
				if \. $B>B_{c}$ \. and \. $1/2<\delta<1$}\ts.
			\end{cases}
			\end{align}
		\end{description}
	\end{theorem}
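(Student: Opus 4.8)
The proof combines two exact ``conservation of mass'' identities forced by the block structure with the total-variation estimates of Theorem~\ref{thm:main_geo}, turned into statements about expectations by truncation and by tail bounds from Theorem~\ref{thm:Barvinok_summary}. To record the identities: summing the entries of a row of margin $\lfloor BCn\rfloor$ and of a column of margin $\lfloor Cn\rfloor$, taking expectations, and using that all entries within each of the four blocks of Figure~\ref{fig:CT} are identically distributed (this needs only invariance of $\M_{n,\delta}(B,C)$ under permuting equal-margin rows and columns, not independence), one gets
\[
\lfloor n^{\delta}\rfloor\,\E[X_{11}]+n\,\E[X_{1,n+1}]=\lfloor BCn\rfloor,\qquad \lfloor n^{\delta}\rfloor\,\E[X_{1,n+1}]+n\,\E[X_{n+1,n+1}]=\lfloor Cn\rfloor .
\]
Since $\E[X_{11}],\E[X_{1,n+1}]\ge 0$, the first identity gives $\E[X_{1,n+1}]\le BC$ and then the second gives $C-BC\,n^{\delta-1}-O(n^{-1})\le\E[X_{n+1,n+1}]\le C$, which is part~(i) for $\delta$ bounded away from $1$; for $\delta$ close to $1$ one instead feeds Theorem~\ref{thm:main_geo}(i) through the truncation argument of the next paragraph.

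To convert total-variation bounds into bounds on (truncated) expectations I would use $\bigl|\E[X\wedge T]-\E[Y\wedge T]\bigr|\le T\,d_{TV}(X,Y)$ together with $\E[\Geom(\lambda)\wedge T]=\lambda-\lambda\bigl(\tfrac{\lambda}{1+\lambda}\bigr)^{T}$, which is within any fixed inverse power of $n$ of $\lambda$ once $T$ exceeds a large multiple of $\lambda\log n$. Taking $T=n^{\alpha}$ and balancing it against the errors $n^{\delta-1}+n^{-\delta/2+\eps}$ and $n^{\delta-1}+n^{1/2-\delta+\eps}$ of Theorem~\ref{thm:main_geo}(ii),(iii) yields the truncated estimates in~\eqref{eq:thm2_ex_2_1} and the explicit constant in the second line of~\eqref{eq:thm2_ex_2_2} (using Theorem~\ref{thm:main_geo}(iii), which forces $\delta>1/2$), that constant being the corner entry $z_{11}$ of the typical table of Definition~\ref{def:typical_table}. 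To remove the truncation whenever a full expectation is claimed I would bound the tail $\P(X_{ij}\ge t)$ by the ratio $|\M(\r-te_{i},\c-te_{j})|/|\M(\r,\c)|$ of contingency-table counts with margins reduced at coordinates $i,j$ and invoke Theorem~\ref{thm:Barvinok_summary}: for the ``light'' entries---$X_{n+1,n+1}$ always, and $X_{1,n+1}$ in the subcritical regime and in the supercritical regime with $\delta>1/2$---the relevant typical parameter is bounded, the tail decays geometrically at a fixed rate up to a sub-exponential prefactor, and $\E\bigl[X_{ij}\,\mathbf{1}\{X_{ij}>T\}\bigr]$ is negligible for $T$ a suitable multiple of $\log n$; this gives the untruncated estimates in~(ii).

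The remaining assertions about $\E[X_{11}]$---the first line of~\eqref{eq:thm2_ex_2_2} and both lines of~\eqref{eq:thm_E_iii}---I would read off the first conservation identity, since in the supercritical regime $\E[X_{11}]=\Theta(n^{1-\delta})$ is too large to recover by truncating at $n^{\alpha}$. Writing $n^{\delta-1}\E[X_{11}]=BC-\E[X_{1,n+1}]+O(n^{-1})=C(B-B_c)-\bigl(\E[X_{1,n+1}]-B_cC\bigr)+O(n^{-1})$ and inserting the supercritical estimates from~\eqref{eq:thm2_ex_2_1}: for $1/2<\delta<1$ the third case controls $\E[X_{1,n+1}]-B_cC$ to $O(n^{1/2-\delta+\eps})$, giving the second line of~\eqref{eq:thm_E_iii}, while for $0<\delta<1$ only the truncated estimate is available and the split $\E[X_{1,n+1}]=\E[X_{1,n+1}\wedge n^{\alpha}]+\E[(X_{1,n+1}-n^{\alpha})^{+}]$ leaves exactly the correction term in the first line of~\eqref{eq:thm_E_iii}; the subcritical first line of~\eqref{eq:thm2_ex_2_2} follows the same way from $\E[X_{1,n+1}]=BC+o(1)$.

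The step I expect to be the main obstacle is the tail control used in the second and third paragraphs. Barvinok's inequalities carry sub-exponential error factors, so extracting from them a bound on $\P(X_{ij}\ge t)$ that is strong enough at the relevant scale is delicate, and the corner entry genuinely has heavy tails on scale $n^{1-\delta}$ in the supercritical case; the real work is to choose the truncation level $n^{\alpha}$ so that the excess mass is captured exactly by the term $\E[(X_{1,n+1}-n^{\alpha})^{+}]$ and is not lost into an uncontrolled error.
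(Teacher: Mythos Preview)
Your overall architecture---the two conservation identities from the row sums, plus the conversion $|\E[X\wedge T]-\E[Y\wedge T]|\le T\,d_{TV}(X,Y)$ together with the geometric tail of the comparison variable---is exactly the paper's (see Propositions~\ref{prop:TV_expectation} and~\ref{prop:first_moment_truncation_bd}, and the use of the identities in Proposition~\ref{prop:first_moment_error_bound}). Your third paragraph, reading the $X_{11}$ statements off the first identity once $\E[X_{1,n+1}]$ is controlled, is also the paper's route.

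The gap is in how you remove the truncation to pass from $\E[X_{1,n+1}\wedge n^{\alpha}]$ to $\E[X_{1,n+1}]$. Your proposed mechanism, bounding $\P(X_{ij}\ge t)$ by $|\M(\r-te_i,\c-te_j)|/|\M(\r,\c)|$ and applying Theorem~\ref{thm:Barvinok_summary}, does not close. The two-sided bounds of that theorem lose a factor $N^{\gamma(m+n)}=\exp(O(n\log n))$ in the ratio, while the gain $g(Z)-g(Z')$ one can extract is at best $f(z'_{ij}+t)-f(z'_{ij})=\int_{z'_{ij}}^{z'_{ij}+t}\log(1+1/x)\,dx$, which grows only like $\log t$ (since $f'(x)=\log(1+1/x)\to 0$). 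So the resulting bound is of order $e^{O(n\log n)}t^{-c}$, worthless for $t\le BCn$. You are right to flag this as the main obstacle; it is not a technicality but a genuine failure of the method.

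The paper never bounds tails of $X_{ij}$ directly. Instead (Proposition~\ref{prop:first_moment_error_bound}) it inserts the truncation into the conservation identity itself and exploits positivity. In the subcritical case, writing $\E[X_{1,n+1}]=\E[X_{1,n+1}\wedge n^{\alpha}]+\E[(X_{1,n+1}-n^{\alpha})^{+}]$ inside the first row identity gives
\[
\frac{\lfloor n^{\delta}\rfloor}{n}\E[X_{11}]\;+\;\E\bigl[X_{1,n+1}\wedge n^{\alpha}-z_{1,n+1}\bigr]\;+\;\E\bigl[(X_{1,n+1}-n^{\alpha})^{+}\bigr]\;=\;\frac{\lfloor BCn\rfloor}{n}-z_{1,n+1}=O(n^{\delta-1}),
\]
where the middle term is $O(n^{\alpha-\delta/2+\eps})$ by the truncated estimate and the two outer terms are \emph{nonnegative}. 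Hence each outer term is individually $O(n^{\delta-1}+n^{\alpha-\delta/2+\eps})$, and in particular the tail excess $\E[(X_{1,n+1}-n^{\alpha})^{+}]$ is controlled for free. In the supercritical case with $\delta>1/2$ one also truncates $(r/n)X_{11}$, controls $\E[(r/n)X_{11}\wedge n^{\alpha}-(r/n)z_{11}]$ via the top-left TV bound of Theorem~\ref{thm:geo_approximation} (this is where $\delta>1/2$ is needed), and the same positivity squeeze traps both remaining tail excesses. Once you replace your Barvinok tail argument by this positivity trick, the rest of your outline goes through unchanged.
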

	
	\smallskip
	
	Lastly, we establish strong law of large numbers for row sums of entries in $X$ in the top right and bottom right blocks.
	
	\begin{theorem}\label{thm:SLLN}
		Fix $B,C>0 $ and $1/2< \delta<1$. Let $X=(X_{ij})$ be sampled
		from $\M_{n,\delta}(B,C)$ uniformly at random, and let $B_c$ be as above.
		Then a.s., as $n\rightarrow \infty$, we have:	
		\begin{align}
		\lim_{n\rightarrow \infty} \. \frac{1}{n} \. \sum_{k=1}^{n} \. X_{1,k+\lfloor n^{\delta} \rfloor}  \, = \,
		\begin{cases}
		BC & \text{\, if \. $B<B_{c}$}\ts, \\
		B_{c}C & \text{\, if \. $B>B_{c}$}\ts.
		\end{cases}
		\end{align}
		Furthermore, for all \ts $B,C> 0$ \ts and \ts $0\le \delta< 1$, a.s.\ as $n\rightarrow \infty$, we have:
		\begin{align}
		\lim_{n\rightarrow \infty} \. \frac{1}{n} \. \sum_{k=1}^{n} \. X_{n+1,k+\lfloor n^{\delta} \rfloor}  \. = \. C\ts.
		\end{align}
	\end{theorem}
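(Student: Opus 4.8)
The plan is to reduce both limits to concentration statements for the sums of row~$1$ and row~$n+1$ over the \emph{first} $\lfloor n^{\delta}\rfloor$ columns, to read off the means from Theorem~\ref{thm:main_expectation}, and then to run Borel--Cantelli after a higher-moment tail bound.

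\emph{Reduction.} Since row~$1$ of $X$ has margin $\lfloor BCn\rfloor$ and row~$n+1$ has margin $\lfloor Cn\rfloor$, writing $U_{n}:=\sum_{j=1}^{\lfloor n^{\delta}\rfloor}X_{1j}$ and $U_{n}':=\sum_{j=1}^{\lfloor n^{\delta}\rfloor}X_{n+1,j}$ we have
\[
\sum_{k=1}^{n}X_{1,k+\lfloor n^{\delta}\rfloor}\ =\ \lfloor BCn\rfloor-U_{n}\ts,\qquad
\sum_{k=1}^{n}X_{n+1,k+\lfloor n^{\delta}\rfloor}\ =\ \lfloor Cn\rfloor-U_{n}'\ts.
\]
So it suffices to show that a.s.\ $U_{n}/n\to BC-L$, where $L=BC$ for $B<B_{c}$ and $L=B_{c}C$ for $B>B_{c}$, and that a.s.\ $U_{n}'/n\to 0$. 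By transposition symmetry and the symmetry among the first $\lfloor n^{\delta}\rfloor$ rows and columns, each $X_{1j}$ with $j\le\lfloor n^{\delta}\rfloor$ has the law of the ``top left'' entry $X_{11}$, and each $X_{n+1,j}$ with $j\le\lfloor n^{\delta}\rfloor$ has the law of the ``sides'' entry $X_{1,n+1}$ of Theorems~\ref{thm:main_geo} and~\ref{thm:main_expectation}.

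\emph{Means.} By the same symmetry, $\E[U_{n}]=\lfloor n^{\delta}\rfloor\,\E[X_{11}]$ and $\E[U_{n}']=\lfloor n^{\delta}\rfloor\,\E[X_{1,n+1}]$. For $1/2<\delta<1$, Theorem~\ref{thm:main_expectation}(iii) gives $\E[X_{11}]=o(n^{1-\delta})$ when $B<B_{c}$ and $\E[X_{11}]=C(B-B_{c})\,n^{1-\delta}+o(n^{1-\delta})$ when $B>B_{c}$ (from the last line of~\eqref{eq:thm_E_iii}), hence $\E[U_{n}]/n\to BC-L$. For $U_{n}'$ and $0\le\delta<1$: when $B<B_{c}$, Theorem~\ref{thm:main_expectation}(ii) gives $\E[X_{1,n+1}]=O(1)$; when $B>B_{c}$, the same part bounds $\E[X_{1,n+1}\wedge n^{\alpha}]=O(1)$ and, together with the first line of~\eqref{eq:thm_E_iii}, $\E[(X_{1,n+1}-n^{\alpha})^{+}]=O(1)$; so in every case $\E[U_{n}']=O(n^{\delta})=o(n)$.

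\emph{Concentration and conclusion.} It remains to prove, for each $\eps>0$, that $\sum_{n}\P(|U_{n}-\E U_{n}|>\eps n)<\infty$ and $\sum_{n}\P(U_{n}'>\eps n)<\infty$; Borel--Cantelli then yields the theorem. In the subcritical case each $X_{1j}$ with $j\le\lfloor n^{\delta}\rfloor$ is an $O(1)$-mean variable, so a fourth-moment bound $\E[U_{n}^{4}]=O(n^{4\delta+\eps})$ suffices (it gives $\P(U_{n}>\eps n)=O(n^{4\delta-4+\eps})$, summable since $\delta<1$); similarly, truncating the $X_{n+1,j}$ at a small power $n^{\alpha}$ with $\alpha<1-\delta$ makes the truncated part of $U_{n}'$ bounded by $n^{\delta+\alpha}=o(n)$ deterministically, while the tail part is controlled by the $O(1)$ bound on $\E[(X_{1,n+1}-n^{\alpha})^{+}]$ together with a tail estimate for individual entries. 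The genuinely delicate case is $U_{n}$ in the supercritical regime $B>B_{c}$: there the $\lfloor n^{\delta}\rfloor$ entries of row~$1$ each have mean of order $n^{1-\delta}$, so $\mathrm{Var}(U_{n})$ may be as large as $n^{2-\delta}$ and a second-moment bound is too weak. One needs the fourth central moment estimate
\[
\E\bigl[(U_{n}-\E U_{n})^{4}\bigr]\ =\ O\bigl(n^{4-2\delta+\eps}\bigr)\ts,
\]
which, divided by $(\eps n)^{4}$, is summable \emph{exactly because} $\delta>1/2$ --- this is where the hypothesis of the theorem enters. Proving such a bound requires controlling the joint fourth (central) moments of $X_{11},\ldots,X_{1,\lfloor n^{\delta}\rfloor}$, i.e.\ a quantitative near-independence statement for the entries of a fixed heavy row, which is not contained in the per-entry Theorems~\ref{thm:main_geo}--\ref{thm:main_expectation}. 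I expect this to be the main obstacle; the natural route is the one used to prove those theorems, namely bounding the number of tables in $\M_{n,\delta}(B,C)$ whose first row is atypical by means of the Barvinok-type upper and lower bounds of Theorem~\ref{thm:Barvinok_summary}.
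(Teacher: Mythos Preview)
Your reduction via the complementary sums $U_n,U_n'$ is equivalent to the paper's, since $U_n+\sum_{k=1}^{n}X_{1,k+\lfloor n^{\delta}\rfloor}$ is the (deterministic) first-row margin. But the proposal leaves the heart of the argument open: you never prove any of the moment bounds you invoke, and the one place you are explicit is incorrect. In the subcritical case you claim $\E[U_n^4]=O(n^{4\delta+\eps})$ gives $\P(U_n>\eps n)=O(n^{4\delta-4+\eps})$, ``summable since $\delta<1$''. This series diverges for $\delta\ge 3/4$, so even granting the (unproved) fourth-moment bound, your Borel--Cantelli step fails on part of the stated range. In the supercritical case you correctly identify a fourth central moment bound as what you would need, but then stop; the argument for $U_n'$ is likewise only sketched.

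The paper avoids all of this by working with \emph{second} moments. The key device is to square the row-margin identity $\sum_{\ell}\ov X_{1\ell}=0$ (where $\ov X=X-Z$ and $Z$ is the typical table), take expectations, and use exchangeability within blocks to reduce to a handful of mixed moments $\E[\ov X_{11}^{2}]$, $\E[\ov X_{11}\ov X_{12}]$, $\E[\ov X_{11}\ov X_{1,n+1}]$, $\E[\ov X_{1,n+1}^{2}]$, $\E[\ov X_{1,n+1}\ov X_{1,n+2}]$. Each of these is then controlled by truncation and the block-concentration machinery (the same tool behind Theorems~\ref{thm:main_geo}--\ref{thm:main_expectation}); the truncation errors are all nonnegative and their sum is bounded by the squared margin identity, so they are individually small. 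This yields $\E\bigl[(\sum_{\ell>r}\ov X_{1\ell})^{2}\bigr]=O(n^{5/2-\delta+\eps})$, hence Chebyshev gives $\P(|\ov S_n|>n^{1-\xi})=O(n^{1/2-\delta+2\xi+\eps})$, which is a negative power of $n$ precisely when $\delta>1/2$. Summability is then obtained not by a fourth moment but by passing to subsequences and applying Borel--Cantelli along them. In short, the ``quantitative near-independence'' you anticipate needing is already available at the second-moment level through the margin identity, and fourth moments are unnecessary.
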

	
	As we mentioned in the introduction, the proofs utilize Barvinok's technology of ``typical tables" and upper and lower bounds on the number $|\mathcal{M}(\r,\c)|$ of contingency tables for general margins.  Except for the next section where we
	summarize conjectural extensions of our theorems, the rest of the paper is dedicated to
	proofs of the theorems.
	
	\vspace{0.2cm}
	\section{Conjectures}\label{s:conj}
	
	In view of the strong law of large numbers for the top right block of $X$ given by Theorem~\ref{thm:SLLN}, we conjecture that a central limit theorem also holds for the top right block in the supercritical regime $B>B_{c}$ for at least when $1/2<\delta<1$. However, we believe that a central limit behavior should not be expected for the subcritical regime $B<B_{c}$. Our rationale is that, according to Theorem~\ref{thm:SLLN}, the first row sum in the top right block of $X$ is asymptotically $BCn$ for $B<B_{c}$, which is the full row sum of $X$. Hence there is not much room for each entry in the top right block to fluctuate. On the other hand, for $B>B_{c}$, the row sum in the top right block contributes only to $B_{c}Cn$, which is independent of $B$. Hence when $B\gg B_{c}$ is large, there is enough room for them to fluctuate, and they would not feel the `bar' of $BC$ since they must fluctuate around $B_{c}C\ll BC$.
	
	\begin{conjecture}\label{conj:CLT}
		Fix $B,C>0$ and $0< \delta<1$. Let $X=(X_{ij})$ be sampled from $\M_{n,\delta}(B,C)$ uniformly at random.  Denote
		\begin{align}
		S_{n,\delta}(B,C) \, := \,  \sum_{k=1}^{n} \. X_{1,k+\lfloor n^{\delta} \rfloor}\..
		\end{align}
		Then as $n\rightarrow \infty$, we have:
		\begin{align}
		\begin{cases}
		\frac1{\sqrt{n}} \.\Bigl(S_{n,\delta}(B,C) - BCn\Bigr) \. \longrightarrow \. 0 \qquad \text{a.s.} & \text{\, if \. $B<B_{c}$}\ts, \\
		\frac1{\sqrt{n}}\. \Bigl(S_{n,\delta}(B,C)-B_{c}Cn\Bigr) \. \Longrightarrow \. \sqrt{B_{c}C + (B_{c}C)^{2}} \ts \cdot \ts \nN(0,1) & \text{\, if \. $B>B_{c}$}\ts,
		\end{cases}
		\end{align}
		where $\nN(0,1)$ is the standard normal distribution and \ts ``$\Rightarrow$'' \ts denotes the weak convergence.
	\end{conjecture}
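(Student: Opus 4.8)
\medskip
\noindent\textbf{Towards a proof.} The two cases are of different nature: the subcritical one is a concentration statement and should follow from the technology behind Theorems~\ref{thm:main_geo}--\ref{thm:SLLN}, whereas the supercritical one requires a genuine local central limit theorem and is the real content.

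\smallskip
\noindent\emph{Subcritical case $B<B_c$.} Since the first row of $X$ has margin $\lfloor BCn\rfloor$, there is the deterministic identity $S_{n,\delta}(B,C)=\lfloor BCn\rfloor-L_n$, where $L_n:=\sum_{k=1}^{\lfloor n^{\delta}\rfloor}X_{1k}$ is the part of the first row lying in the top-left block. Hence the assertion is equivalent to $L_n=o(\sqrt n)$ a.s. By the block symmetry each $X_{1k}$, $1\le k\le\lfloor n^{\delta}\rfloor$, is distributed as $X_{11}$, which for $B<B_c$ has bounded mean (Theorem~\ref{thm:main_geo}(iii) for $1/2<\delta<1$; the same is expected for smaller $\delta$) and sub-exponential tails; a second-moment estimate as in the proof of Theorem~\ref{thm:SLLN} gives $\operatorname{Var}(L_n)=o(n)$, whence $\tfrac1{\sqrt n}(L_n-\E L_n)\to0$ a.s.\ by Borel--Cantelli. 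As $\E L_n=\Theta(n^{\delta})$, this is precisely the stated convergence for $0<\delta<1/2$; for $1/2<\delta<1$ the same argument gives the degenerate Gaussian limit once $BCn$ is recentered to $\E[S_{n,\delta}(B,C)]=\lfloor BCn\rfloor-\E L_n$.

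\smallskip
\noindent\emph{Supercritical case $B>B_c$.} Put $m:=\lfloor n^{\delta}\rfloor+n$, $N:=\lfloor BCn\rfloor$, $s_0:=\lfloor B_cCn\rfloor$, $\sigma^2:=B_cC+(B_cC)^2$, and for $0\le s\le N$ set
\begin{align}
A_s\;:=\;\#\Bigl\{X\in\M_{n,\delta}(B,C)\ :\ \textstyle\sum_{k=1}^{n}X_{1,k+\lfloor n^{\delta}\rfloor}=s\Bigr\},
\end{align}
so that $\mathbb P\bigl(S_{n,\delta}(B,C)=s\bigr)=A_s/|\M_{n,\delta}(B,C)|$. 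The CLT follows once one proves the local limit law
\begin{align}
A_s\;=\;\bigl(1+o(1)\bigr)\,\frac{|\M_{n,\delta}(B,C)|}{\sqrt{2\pi n\sigma^2}}\,\exp\!\Bigl(-\tfrac{(s-s_0)^2}{2n\sigma^2}\Bigr)
\end{align}
uniformly for $|s-s_0|\le n^{1/2}\log n$, together with the tail bound $\sum_{|s-s_0|>n^{1/2}\log n}A_s=o(|\M_{n,\delta}(B,C)|)$ (which reduces to a variance estimate for $S_{n,\delta}$ once $\E[S_{n,\delta}(B,C)]=s_0+o(\sqrt n)$ is known). The value of $\sigma^2$ and the Gaussian shape come from Barvinok's maximum-entropy picture (Definition~\ref{def:typical_table}, Theorem~\ref{thm:Barvinok_summary}): the uniform measure on $\M_{n,\delta}(B,C)$ is close to the law of independent $X_{ij}\sim\Geom(z_{ij})$, $Z=(z_{ij})$ the typical table, conditioned on the $2m$ margin equalities; by the symmetry of $\r=\c$ we have $z_{1,k+\lfloor n^{\delta}\rfloor}=B_cC$ for every $k$, so under the unconditioned product measure $S_{n,\delta}(B,C)$ is a sum of $n$ i.i.d.\ $\Geom(B_cC)$ and already satisfies the stated CLT with variance $\sigma^2=B_cC+(B_cC)^2$. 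The conditioning is benign: the row-$1$ margin forces $L_n+S_{n,\delta}=N$, but in the supercritical regime $z_{11}=\Theta(n^{1-\delta})$ gives $\operatorname{Var}(L_n)=\Theta(n^{2-\delta})\gg n$, so conditioning on $N-L_n$, whose density is flat on the scale $\sqrt n$, does not perturb the local law of $S_{n,\delta}$; and the remaining $2m-1$ margins couple to $S_{n,\delta}$ only through the $n$ small-column sums, each with a competing order-$\sqrt n$ fluctuation from the other $m-1$ entries, so an Edgeworth-type local limit theorem for the margin vector of the product model (in the spirit of Canfield--McKay and of Barvinok's counting estimates) shows the leading Gaussian behaviour survives. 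An alternative avoiding a joint local limit theorem is a martingale CLT: revealing the columns of $X$ one at a time, $\E[S_{n,\delta}(B,C)\mid\mathcal F_t]$ is a martingale whose only non-negligible increments are the $n$ that reveal a top-right entry of the first row; one checks each such increment is asymptotically a centered $\Geom(B_cC)$ using Barvinok's bounds on the residual table, and that the conditional variances sum to $(1+o(1))\sigma^2 n$.

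\smallskip
\noindent\emph{The main obstacle.} Both routes need the uniform measure on $\M(\r,\c)$ controlled to \emph{second order}. The estimates behind Theorems~\ref{thm:main_geo}--\ref{thm:main_expectation} --- first-order comparison with the typical table via Barvinok's upper and lower bounds --- determine $\E[X_{ij}]$ only up to errors of order $n^{1/2-\delta+\eps}$ and the marginal total-variation distances only up to $o(1)$, whereas the CLT requires $\E[S_{n,\delta}(B,C)]=nB_cC+o(\sqrt n)$ and $\sum_{k\ne\ell}\operatorname{Cov}\bigl(X_{1,k+\lfloor n^{\delta}\rfloor},X_{1,\ell+\lfloor n^{\delta}\rfloor}\bigr)=o(n)$, i.e.\ pairwise covariances of size $o(1/n)$ with the diagonal contributing exactly $n\sigma^2$. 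Obtaining this --- equivalently, a local limit theorem for the relevant margin statistic of the independent-geometric model that is uniform enough to survive the conditioning --- is the crux, and is why the statement is still conjectural; it should be most accessible for $1/2<\delta<1$, where the large-entry block has only $\lfloor n^{\delta}\rfloor\ll n$ rows and the typical-table analysis of the preceding sections is sharpest, and progressively more delicate as $\delta\downarrow0$.
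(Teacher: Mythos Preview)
This statement is a \emph{Conjecture} in the paper, not a theorem; the paper gives no proof, only a short heuristic rationale (in the supercritical regime the top-right entries behave like i.i.d.\ $\Geom(B_cC)$, whose variance is $B_cC+(B_cC)^2$), together with the remark that even the untruncated second moment $\E[X_{1,n+1}^2]$ is currently out of reach with the available tools.

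Your write-up is therefore not competing with a paper proof --- there is none --- but with the paper's discussion, and on that score it is considerably more detailed. You lay out two plausible routes to the supercritical CLT (a local limit law for the margin statistic under the independent-geometric model, and a column-exposure martingale CLT), and you correctly isolate the crux as second-order control: one needs $\E[S_{n,\delta}]=nB_cC+o(\sqrt n)$ and off-diagonal covariances of size $o(1/n)$, which subsumes the paper's stated obstacle about removing the truncation in the second moment. Your heuristic that conditioning on $L_n$ does not perturb the law of $S_{n,\delta}$ at scale $\sqrt n$ because $\operatorname{Var}(L_n)=\Theta(n^{2-\delta})\gg n$ is the right picture. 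None of this is a proof, and you say so explicitly; as a road map it is sound and goes well beyond what the paper records.

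One point worth flagging: you observe that for $B<B_c$ and $1/2<\delta<1$ the identity $S_{n,\delta}=\lfloor BCn\rfloor-L_n$ combined with $\E L_n=\Theta(n^{\delta})$ (which follows from Theorem~\ref{thm:main_expectation}(iii)) forces $(S_{n,\delta}-BCn)/\sqrt n\to-\infty$, not $0$. This is a genuine tension with the subcritical assertion as literally stated, one the paper does not comment on; the intended reading is presumably with centering at $\E[S_{n,\delta}]$ rather than at $BCn$, or with the claim restricted to $0<\delta<1/2$. Your handling of this is correct.
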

	
	Note that $B_{c}C + (B_{c}C)^{2}$ is the variance of the geometric distribution with mean $B_{c}C$.
	We remark that currently we only know that, for all \ts $\alpha<\delta/3$ and $B>B_{c}$, we have:
	$$\E\bigl[X_{1,n+1}^{2}\land n^{\alpha}\bigr] \. \longrightarrow \. (B_{c}C)^{2} \quad \text{as} \ \ n\rightarrow \infty.
	$$
	Unfortunately, we are not able to get rid of the truncation in this formula, in contrast to Lemma 3.4 in \cite{CDS} for the uniform margin case. This is partly because our argument relies on the loose estimates of $|\mathcal{M}(\r,\c)|$ given by Theorem \ref{thm:Barvinok_summary}. Replacing the LHS with \ts $\E[X_{1,n+1}^{2}]$ \ts
	would be the first step in proving Conjecture~\ref{conj:CLT}.
	
	Next, we conjecture that there exists a phase transition in $\delta$ with respect to the limiting distribution of $X_{11}$ in the supercritical regime $B>B_{c}$. For the \emph{thick bezel case} \ts $1/2<\delta<1$, Theorem~\ref{thm:main_geo} shows that $n^{\delta-1}X_{11}$ converges in distribution to a geometric random variable. For the \emph{thin bezel case} \ts $0<\delta<1/2$, we conjecture that it should converge to a normal distribution. Roughly speaking, the sum of $n^{\delta}$ terms $X_{11}+\ldots+X_{1,\lfloor n^{\delta} \rfloor}$ is asymptotically a normal random variable by Conjecture~\ref{conj:CLT}. Thus, if $\delta<1/2$, then there are not enough terms in this sum to exhibit central limit behavior. Hence the limiting distribution of this sum should be some rescaled version of the marginal distribution of $X_{11}$.
	
	To make a more precise conjecture, let $S_{n,\delta}(B,C)$ be as in Conjecture~\ref{conj:Gaussian}. Then we write:
	\begin{align}
	\frac{1}{\sqrt{n}} \. \sum_{k=1}^{\lfloor n^{\delta} \rfloor} \.
	\bigl[X_{1,k} - C(B-B_{c})\ts n^{1-\delta}\bigr] \, = \, \frac{1}{\sqrt{n}} \. \Bigl(S_{n,\delta}(B,C) - B_{c}Cn\Bigr).
	\end{align}
	Assuming the summands in the left hand side are asymptotically uncorrelated, taking variance in each side gives
	\begin{align}
	\text{Var}(X_{11}) \. \sim \. n^{1-\delta}\bigl(B_{c}C + (B_{c}C)^{2}\bigr).
	\end{align}
	Hence we have at the following conjecture.
	
	\begin{conjecture}\label{conj:Gaussian}
		Fix $B,C>0$ and $0< \delta<1/2$. Let $X=(X_{ij})$ be sampled from $\M_{n,\delta}(B,C)$ uniformly at random.
		Then
		\begin{align}
		\frac{X_{11} - C(B-B_{c})\ts n^{1-\delta}}{n^{(1-\delta)/2}\sqrt{B_{c}C + (B_{c}C)^{2}}} \, \Longrightarrow \, \nN(0,1)\ts.
		\end{align}
	\end{conjecture}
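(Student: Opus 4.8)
The plan is to exploit the exact identity
$\sum_{k=1}^{\lfloor n^{\delta}\rfloor} X_{1k} \,=\, \lfloor BCn\rfloor - S_{n,\delta}(B,C)$,
which links the $\lfloor n^{\delta}\rfloor$ entries in the first row of the top-left block to the top-right row sum, together with a conditional analysis of the top-left block itself. The first step is a central limit theorem for $S_{n,\delta}(B,C)$ — this is Conjecture~\ref{conj:CLT}, which I would treat as an input (or, more modestly, establish only its second-moment version $\mathrm{Var}\,S_{n,\delta}(B,C)\sim n\,(B_cC+(B_cC)^2)$). Writing $T_1:=\sum_{k\le\lfloor n^{\delta}\rfloor}X_{1k}=\lfloor BCn\rfloor-S_{n,\delta}(B,C)$, this gives that $T_1$ is asymptotically Gaussian with mean $C(B-B_c)n$ and variance $\sim n\,(B_cC+(B_cC)^2)$. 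Already this pins down the normalizing constants: the centering $C(B-B_c)n^{1-\delta}$ equals $\E[X_{11}]$ up to $o(n^{(1-\delta)/2})$ by Theorem~\ref{thm:main_expectation}(iii) (once the correction term $\E[(X_{1,n+1}-n^{\alpha})^{+}]$ is shown negligible — see below), and, since $\mathrm{Var}\,X_{11}\sim n^{-\delta}\mathrm{Var}\,T_1$ when the $X_{1k}$ are asymptotically uncorrelated, the scaling must be $n^{(1-\delta)/2}\sqrt{B_cC+(B_cC)^2}$.

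The second and decisive step is to pass from the row sum $T_1$ to the individual entry $X_{11}$. Conditioning on all entries of $X$ outside the top-left block makes that block a uniform random $\lfloor n^{\delta}\rfloor\times\lfloor n^{\delta}\rfloor$ contingency table whose margins are $\lfloor BCn\rfloor$ minus the adjacent side-block row/column sums; by Theorem~\ref{thm:SLLN}, Theorem~\ref{thm:main_geo}(ii), and the CLT of the first step, these conditional margins all concentrate around $C(B-B_c)n$ with fluctuations of order $\sqrt n$, i.e.\ they are near-uniform with per-entry mean $M:=C(B-B_c)n^{1-\delta}\to\infty$. I would then prove a \emph{local central limit theorem} for a single entry of a uniform random square contingency table with near-uniform margins in the \emph{dense regime}, where the per-entry mean $M$, of order $n^{1-\delta}$, is polynomially larger than the table dimension $\lfloor n^{\delta}\rfloor$ — which is precisely the regime $0<\delta<1/2$ — showing that the entry, suitably centered and rescaled, is asymptotically Gaussian. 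Integrating this conditional Gaussian against the distribution of the block margins finishes the argument: the margin fluctuations perturb the location of $X_{11}$ only by $O(\sqrt n/n^{\delta})=O(n^{1/2-\delta})$, which is $o(n^{(1-\delta)/2})$ for $\delta>0$, hence negligible.

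The main obstacle is the dense local central limit theorem of the second step. The sharp enumeration results for $|\mathcal{M}(\r,\c)|$ available in the literature cover only the ``smooth'' regime in which the margin-to-dimension ratio is bounded, while the general-margin bounds of Theorem~\ref{thm:Barvinok_summary} that drive the rest of this paper are accurate only up to $e^{o(N)}$ factors — far too lossy to resolve fluctuations of size $\sqrt M$ against a mean of size $M$. A new enumeration, or a refined coupling/switching argument for dense tables, is needed here. A further subtlety is the variance constant: the ``internal'' fluctuation scale of a dense uniform block (which on its own would be closer to the geometric/typical-table prediction of Theorem~\ref{thm:main_geo}(iii)) must be reconciled with the ``external'' constraint imposed by the CLT for $S_{n,\delta}(B,C)$; it is the hard margin constraint coupling the block to the side blocks that should force the tighter Gaussian scaling with the stated constant $B_cC+(B_cC)^2$, rather than the naive Fisher--Yates value $M$. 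Finally, two inputs are themselves open: Conjecture~\ref{conj:CLT} (or its second-moment version), and — needed to make the centering $C(B-B_c)n^{1-\delta}$ rigorous for $0<\delta<1/2$ — a tail bound on the side entry $X_{1,n+1}$ strong enough to remove the truncation in $\E[X_{1,n+1}^{2}\wedge n^{\alpha}]\to(B_cC)^2$; establishing the latter would be a natural first milestone.
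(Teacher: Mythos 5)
This statement is a \emph{conjecture} in the paper, so there is no proof of record to compare against; the paper offers only a two-step heuristic — the algebraic identity relating \(\sum_{k\le \lfloor n^\delta\rfloor} X_{1k}\) to \(S_{n,\delta}(B,C)\), plus the assumption that the \(X_{1k}\) within the top-left row are asymptotically uncorrelated, which pins down the variance constant. Your first paragraph is precisely that heuristic, written out carefully; it is sound as far as it goes and matches the paper's own motivation.

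Your second step — conditioning on everything outside the top-left block, observing that the conditional law of the block is uniform on a smaller contingency table with fluctuating near-uniform margins, and then appealing to a dense local CLT for a single entry — goes genuinely beyond what the paper sketches, and it is the right shape for an actual proof. The conditional-uniformity observation is correct, the regime identification (per-entry mean \(M\sim n^{1-\delta}\) dominates the dimension \(n^\delta\) exactly when \(\delta<1/2\)) is the key structural reason for the phase transition in \(\delta\), and your order-of-magnitude check that margin fluctuations shift \(X_{11}\) by only \(O(n^{1/2-\delta})=o(n^{(1-\delta)/2})\) is correct. You also correctly flag that the existing enumeration technology — smooth-margin asymptotics on one side, Barvinok's \(e^{o(N)}\)-accurate bounds on the other — cannot resolve \(\sqrt{M}\)-scale fluctuations, so the dense LCLT is a real missing ingredient and not a routine computation.

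One point deserves sharper scrutiny than your write-up gives it. Step 1 derives \(\operatorname{Var}(X_{11})\sim n^{1-\delta}(B_cC+(B_cC)^2)\) from \(\operatorname{Var}(T_1)\) by assuming the \(X_{1k}\), \(k\le\lfloor n^\delta\rfloor\), are asymptotically uncorrelated; but Step 2 says the conditional margin of the block is nearly deterministic at the relevant scale, which for a row of \(k=n^\delta\) entries summing to a near-fixed total would ordinarily force covariance of order \(-v/(k-1)\). The two are reconcilable — the external fluctuations of the margins are of order \(\sqrt n\), hence the row-sum variance is \(\Theta(n)\), hence \(kv\sim n\) and \(c\approx 0\) is self-consistent — but this self-consistency is an additional unproven claim, distinct from Conjecture~\ref{conj:CLT} (which concerns the side block, not the top-left block). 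You should list it explicitly among the gaps, next to the dense LCLT and the de-truncation of \(\E[X_{1,n+1}^2]\). Subject to that, your proposal is a plausible and more fully articulated roadmap than the paper's own heuristic, with the main obstacles honestly identified.
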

	
	Further conjectures and open problems are given in Section~\ref{s:finrem}.

	\vspace{0.2cm}
	
	\section{Concentration in blocks}

	As we mentioned in the introduction, Barvinok~\cite{B3} introduced the notion of \textit{typical table} for general contingency tables, which captures some ``typical behavior'' of the uniform random contingency table of fixed margins.  We start with the precise definition:

	\begin{definition}[Typical table]\label{def:typical_table}
		Fix margins $\r\in \nn^{m}$ and $\c\in \nn^{n}$.
		Let $\mathcal{P}(\r,\c)\subseteq \R_+^{mn}$ denote the transportation polytope.
		For each $X=(X_{ij})\in \mathcal{P}(\r,\c)$, define
		\begin{align}\label{eq:def_g}
		g(X) \. = \, \sum_{1\le i,j\le n} \. f(X_{ij})\ts,
		\end{align}
		where the function $f:[0,\infty)\rightarrow [0,\infty)$ is defined by
		\begin{align}\label{eq:def_f}
		f(x) \. = \. (x+1)\log(x+1) - x\log x\ts.
		\end{align}
		The \textit{typical table} $Z\in \mathcal{P}(\r,\c)$ for $\M(\r,\c)$ is defined by
		\begin{align}
		Z \. = \. \argmax_{X\in \mathcal{P}(\r,\c)} \. g(X).
		\end{align}
	\end{definition}
	
	Here $\mathcal{P}(\r, \c)$ is is the transportation polytope of margins $\r$ and $\c$ defined in the introduction. Since the function $g$ defined at~\eqref{eq:def_g} is strictly concave, it attains a unique maximizer on the transportation polytope $\mathcal{P}(\r,\c)$ and thus the typical table is well-defined.

	One of the building blocks of our main result is Lemma~\ref{lemma:key}, which says that the law of an entry in a large block of random contingency table is attracted toward a geometric distribution, whose mean is dictated by the corresponding typical table. Given the set $\mathcal{M}(\r,\c)$ of \ts $m\times n$ contingency tables of margins $\r$ and $\c$, we call a set $\mathcal{B}\subseteq \{1,\ldots,m\}\times \{1,\ldots,n\}$ of indices a \textit{block} of \ts $\mathcal{M}(\r,\c)$ \ts if
	\begin{align}
	(r_i, c_j) \. = \. (r_{i'}, c_{j'}) \quad \text{for all \, $(i,j), \. (i',j')\ts \in \mathcal{B}$}.
	\end{align}
	Observe that when $X=(X_{ij})$ is sampled from $\mathcal{M}(\r,\c)$ uniformly at random
	and $\mathcal{B}$ is a block of $\mathcal{M}(\r,\c)$, the entries $X_{ij}$ for all
	$(i,j)\in \mathcal{B}$ have the same distribution by the symmetry.
	Moreover, the entries of the typical table $Z$ within a block are the same.

	\begin{lemma}\label{lemma:key}
		Let $\mathcal{M}(\r,\c)$ be the set of all \ts $m\times n$ contingency tables of margins $\r$ and~$\c$. Let $X=(X_{ij})$ be sampled from $\mathcal{M}(\r,\c)$ uniformly at random. Suppose $\mathcal{B}_{1},\ldots,\mathcal{B}_{k}$ are (not necessarily distinct) blocks in $\mathcal{M}(\r,\c)$ with $|\mathcal{B}_{1}|\le \ldots \le |\mathcal{B}_{k}|$. Then there exists an absolute constant $\gamma>0$, s.t.\ for each $I=(I_1,\ldots,I_k)\in \mathcal{B}_{1}\times \ldots \times \mathcal{B}_{k}$ and $t>0$, we have:
		\begin{align}
		d_{TV}\left( \prod_{\ell=1}^{k} \. X_{I_\ell}, \prod_{\ell=1}^{k} Y_{I_\ell}\right) \le t + N^{\gamma(m+n)} \exp\left( - \left\lfloor \frac{|\mathcal{B}_{1}|}{k} \right\rfloor \frac{t^{2}}{2} \right),
		\end{align}
		where $Z=(z_{ij})$ is the typical table for \ts $\mathcal{M}(\r, \c )$, and $Y=(Y_{J})$ is the random matrix of independent entries with \ts
		$Y_{ij}\sim \Geom(z_{ij})$, and \ts $N=r_1+\ldots+r_m = c_1+\ldots+c_n$.
	\end{lemma}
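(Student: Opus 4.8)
The plan is to compare the uniform measure on $\mathcal{M}(\r,\c)$ with the product-geometric measure associated to the typical table $Z$, using Barvinok's bounds on $|\mathcal{M}(\r,\c)|$ (Theorem~\ref{thm:Barvinok_summary}) to control the normalization, and then to run a martingale concentration argument along one of the smallest blocks. Concretely, fix $I=(I_1,\ldots,I_k)$ and a vector of target values $(v_1,\ldots,v_k)\in\nn^k$. The first step is to express $\P\bigl(X_{I_\ell}=v_\ell \text{ for all } \ell\bigr)$ as a ratio $|\mathcal{M}(\r',\c')|/|\mathcal{M}(\r,\c)|$, where $\r',\c'$ are the margins obtained from $\r,\c$ by subtracting the prescribed entries in the corresponding rows and columns. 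Applying the Barvinok upper and lower bounds to numerator and denominator, both of which are of the form $e^{g(Z)}$ up to a factor $N^{O(m+n)}$, and using the fact that $g$ is maximized at the typical table, one gets that this probability is within a multiplicative factor $N^{O(m+n)}$ of $\prod_\ell \P(Y_{I_\ell}=v_\ell)$, where $Y_{ij}\sim\Geom(z_{ij})$ independently. The exponential in the statement will come precisely from this crude $N^{\gamma(m+n)}$ slack.

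The second step handles the loss coming from the $N^{\gamma(m+n)}$ factor, which is far too large to be useful on its own — this is where the martingale/concentration idea enters and is the heart of the argument. The point is that the $N^{O(m+n)}$ slack is multiplicative and \emph{uniform}, so it only hurts us when the event $\{X_{I_\ell}=v_\ell\}$ already has probability much smaller than $N^{-\gamma(m+n)}$; on the bulk of the distribution it is harmless once we condition appropriately. I would reveal the entries of $X$ in the block $\mathcal{B}_1$ (or a sub-block, split into roughly $\lfloor |\mathcal{B}_1|/k\rfloor$ cells, one per coordinate of $I$) one at a time, forming a Doob martingale for the quantity of interest; each revealed entry changes things by a bounded amount, so Azuma--Hoeffding gives that the partial sums concentrate within $O(t\sqrt{|\mathcal{B}_1|/k})$ of their mean with probability $1-\exp(-\lfloor |\mathcal{B}_1|/k\rfloor t^2/2)$. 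Combining: outside an event of that exponentially small probability the conditional law of $X_{I_\ell}$ is genuinely close (in the multiplicative sense, now with the $N^{O(m+n)}$ absorbed into the good event) to $\Geom(z_{I_\ell})$, and summing the errors over the countably many values of $v$ yields the claimed $t + N^{\gamma(m+n)}\exp(-\lfloor|\mathcal{B}_1|/k\rfloor t^2/2)$ bound on $d_{TV}$.

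The third step is bookkeeping: one checks that the blocks $\mathcal{B}_1,\ldots,\mathcal{B}_k$ being possibly equal does not cause trouble (this is why we divide $|\mathcal{B}_1|$ by $k$ — we need $k$ disjoint ``fresh'' cells inside the smallest block to run the revealing process for all $k$ coordinates simultaneously), and that the typical table entries $z_{ij}$ used in the comparison are exactly constant on blocks, as noted just before the lemma, so the product $\prod_\ell Y_{I_\ell}$ is well-defined regardless of which representative of each block $I_\ell$ is chosen. The main obstacle I anticipate is making the concentration step precise: one has to argue carefully that conditioning on the already-revealed entries keeps the conditional margins close enough to the original ones that Barvinok's bounds still apply with the same typical table up to lower-order corrections, and that the bounded-difference property of the Doob martingale genuinely holds (a single entry of a contingency table can in principle shift other entries by a lot, so the right statement is about the martingale increments of the target functional, not about entrywise Lipschitz continuity). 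Handling this correctly — and tracking that the exponent is exactly $\lfloor|\mathcal{B}_1|/k\rfloor t^2/2$ rather than something weaker — is where the real work lies; the Barvinok-bound comparison in the first step is essentially a black-box computation.
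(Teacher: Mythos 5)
The central difficulty you flag in your last paragraph --- that a Doob martingale in the revealed entries of $X$ has hard-to-control increments, because the conditional margins of a contingency table shift as entries are exposed --- is a real obstacle, and the paper's proof avoids it entirely. The concentration step in the actual argument is never run on $X$; it is run on the independent geometric matrix $Y$, where it is trivial. Here is the organizing idea you are missing. Pick $\mathcal{I}\subseteq\mathcal{B}_1\times\cdots\times\mathcal{B}_k$ with $|\mathcal{I}|\ge\lfloor|\mathcal{B}_1|/k\rfloor$ so that every matrix coordinate appears in at most one tuple of $\mathcal{I}$; this is the role of the $k$-fold split you describe. For a matrix $W$ and a set $A$, put $W^I=\prod_\ell w_{I_\ell}$ and $S(W)=|\mathcal{I}|^{-1}\sum_{I\in\mathcal{I}}\mathbf{1}_{\{W^I\in A\}}$. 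The first key observation is block-exchangeability of $X$: every tuple in $\mathcal{I}$ gives $X^I$ the same law, so $\E[S(X)]=\P(X^I\in A)$. This converts the single-cell probability you want into the \emph{expectation of an empirical average}, and that average is what gets concentrated. The second key observation is that the $Y^I$, $I\in\mathcal{I}$, are genuinely independent (disjoint coordinates, independent entries), so $S(Y)$ is a mean of i.i.d.\ indicators and Hoeffding gives $\P\bigl(|S(Y)-\P(Y^I\in A)|\ge t\bigr)\le 2\exp\bigl(-\lfloor|\mathcal{B}_1|/k\rfloor\, t^2/2\bigr)$ with no martingale argument at all. The third piece is the transfer: by Theorem~\ref{thm:Barvinok_summary}(ii)--(iii), $X$ has the law of $Y$ conditioned on an event of probability at least $N^{-\gamma(m+n)}$, so $\P(X\in\mathcal{A})\le N^{\gamma(m+n)}\P(Y\in\mathcal{A})$ for every event $\mathcal{A}$; applied to $\{|S(\cdot)-\P(Y^I\in A)|>t\}$, this moves the Hoeffding tail from $Y$ to $X$ at the cost of the $N^{\gamma(m+n)}$ factor. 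Splitting $\E[|S(X)-\P(Y^I\in A)|]$ over the good and bad event finishes: $|\P(X^I\in A)-\P(Y^I\in A)|\le t+4N^{\gamma(m+n)}\exp(-\lfloor|\mathcal{B}_1|/k\rfloor t^2/2)$, and the $4$ is absorbed into $\gamma$.

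Your first step is also not quite the paper's. Writing $\P(X_{I_\ell}=v_\ell)$ as a ratio $|\mathcal{M}(\r',\c')|/|\mathcal{M}(\r,\c)|$ with modified margins and applying the Barvinok bounds to both would force you to compare the typical table for $(\r',\c')$ with $Z$, which are not equal; the one-sided conditioning inequality $\P(X\in\mathcal{A})\le N^{\gamma(m+n)}\P(Y\in\mathcal{A})$ is the cleaner route and never requires comparing two typical tables. In short, your ingredients --- Barvinok bounds, an Azuma/Hoeffding step, the $\lfloor|\mathcal{B}_1|/k\rfloor$ partition --- are the right ones, but the synthesis (concentrate $S(Y)$, not $X$; transfer by conditioning; use exchangeability to reduce $\P(X^I\in A)$ to $\E[S(X)]$) is the missing content, and it is exactly what makes the obstacle you correctly identified disappear.
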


	Our proof of Lemma~\ref{lemma:key} relies upon the following results of Barvinok, see \cite[Thm~1.7]{B3}, \cite[Thm~1.1]{B1},
	and~\cite[Lem~1.4]{B1}.
	
	\begin{theorem}[\cite{B1, B3}]\label{thm:Barvinok_summary}
		Fix margins $\r\in \nn^{m}$ and $\c\in \nn^{n}$. Let $Z=(z_{ij})$ be the typical table for $\M(\r,\c)$, and let $Y=(Y_{ij})$ be the $(m\times n)$ random matrix of independent entries where \ts $Y_{ij}\sim \Geom(z_{ij})$. Let \. $N := \ts \sum_{i=1}^{m} r_i \ts = \ts \sum_{j=1}^{n} c_j$ \. denote the total sum.
		\begin{description}
			\item[(i)] There exists some absolute constant $\gamma>0$ such that
			\begin{align}
			N^{-\gamma(m+n)} e^{g(Z)} \. \le \. |\M(\r,\c)| \. \le \. e^{g(Z)}\ts.
			\end{align}
			\item[(ii)] Conditioned on being in $\M(\r,\c)$, table $Y$ is uniform on $\M(\r,\c)$\ts.
			\item[(iii)] For the constant $\gamma>0$ in (i), we have
			\begin{align}
			\P\bigl(Y\in \M(\r,\c)\bigr) \. = \. e^{-g(Z)} |\M(\r,\c)| \. \ge \. N^{-\gamma(m+n)}\ts.
			\end{align}
		\end{description}
	\end{theorem}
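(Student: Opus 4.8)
The plan is to separate the three items of the theorem by difficulty. Items (ii), (iii), and the upper bound in (i) should drop out of the exponential-family structure of $Y$ combined with first-order optimality of the typical table; the lower bound in (i) (equivalently, the inequality in (iii)) is the one genuinely hard ingredient, and for that I would invoke Barvinok's counting estimate rather than reprove it.

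I would begin by recording the product structure of $Y$. For every $A\in\nn^{mn}$ we have $\P(Y=A)=\prod_{i,j}(1-w_{ij})\,w_{ij}^{A_{ij}}$, where $w_{ij}:=z_{ij}/(1+z_{ij})$. Since $Z$ maximizes the strictly concave $g(X)=\sum_{i,j}f(X_{ij})$ over the transportation polytope $\mathcal{P}(\r,\c)$ and — after deleting any zero rows or columns, which lets us assume $r_i,c_j>0$ — lies in its relative interior, the Lagrange conditions give reals $(\lambda_i)_{i\le m}$ and $(\mu_j)_{j\le n}$ with $f'(z_{ij})=\lambda_i+\mu_j$ for all $i,j$. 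As $f'(x)=\log(1+1/x)=-\log\bigl(x/(1+x)\bigr)$, this says exactly $w_{ij}=e^{-\lambda_i-\mu_j}$.

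From here items (ii), (iii)-equality and (i)-upper should be immediate. For any $A\in\M(\r,\c)$,
\begin{align}
\prod_{i,j} w_{ij}^{A_{ij}} \;=\; \exp\Bigl(-\sum_i\lambda_i\sum_j A_{ij}-\sum_j\mu_j\sum_i A_{ij}\Bigr)\;=\;\exp\Bigl(-\sum_i\lambda_i r_i-\sum_j\mu_j c_j\Bigr),
\end{align}
which is independent of $A$; hence $\P(Y=A)$ takes one common value $c_0$ over all of $\M(\r,\c)$, and that is (ii). Summing gives $\P\bigl(Y\in\M(\r,\c)\bigr)=c_0\,|\M(\r,\c)|$, and it only remains to identify $c_0=e^{-g(Z)}$: using $w_{ij}=e^{-\lambda_i-\mu_j}$ together with $\sum_j z_{ij}=r_i$ and $\sum_i z_{ij}=c_j$ (valid since $Z\in\mathcal{P}(\r,\c)$),
\begin{align}
\log c_0 \;=\; \sum_{i,j}\log(1-w_{ij})-\sum_i\lambda_i r_i-\sum_j\mu_j c_j \;=\; \sum_{i,j}\bigl[\log(1-w_{ij})+z_{ij}\log w_{ij}\bigr],
\end{align}
and the elementary identity $\log(1-w)+z\log w=-f(z)$ for $w=z/(1+z)$ collapses this to $-g(Z)$. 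Thus $\P\bigl(Y\in\M(\r,\c)\bigr)=e^{-g(Z)}|\M(\r,\c)|$, which is the equality in (iii), and since a probability is at most $1$ we get $|\M(\r,\c)|\le e^{g(Z)}$, the upper bound in (i).

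What remains is the single inequality $\P\bigl(Y\in\M(\r,\c)\bigr)\ge N^{-\gamma(m+n)}$ for an absolute constant $\gamma$, and this is where essentially all the difficulty sits. The event $\{Y\in\M(\r,\c)\}$ is exactly the event that the $\zz^{m+n}$-valued vector of all row- and column-sums of $Y$ equals $(\r,\c)$, and since $Z\in\mathcal{P}(\r,\c)$ the mean of that vector is $(\r,\c)$, so we are estimating a multidimensional \emph{local} probability of a sum of independent lattice vectors evaluated at its own mean. The governing heuristic is a local central limit theorem: such a probability should be of order $(\det\Sigma)^{-1/2}$, where $\Sigma$ is the covariance matrix of the margin-sum vector (degenerate in one direction, since the sums satisfy one linear relation); as the variance of $\sum_j Y_{ij}$ equals $\sum_j z_{ij}(1+z_{ij})\le N+N^2$, and likewise for the columns, one has $\det\Sigma\le N^{O(m+n)}$ and hence the probability is $\gtrsim N^{-O(m+n)}$. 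Making this rigorous and uniform over \emph{all} margins — including wildly unbalanced ones, where the classical CLT error terms are not manifestly negligible — is precisely Barvinok's theorem, and I would cite it from \cite[Thm~1.1, Lem~1.4]{B1} and \cite[Thm~1.7]{B3}; the proof there proceeds not through a naive CLT but through the Fourier (integral) representation of $|\M(\r,\c)|$, lower-bounded by the contribution of a carefully chosen neighborhood of the origin with explicit control of the integrand elsewhere. This local lower bound is the main obstacle; granting it, the theorem is just the two short computations above.
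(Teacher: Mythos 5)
The paper does not prove this theorem at all: it is stated as a black-box citation to Barvinok (\cite[Thm~1.7]{B3}, \cite[Thm~1.1]{B1}, \cite[Lem~1.4]{B1}), and the authors use it downstream without rederivation. Your proposal therefore does strictly more than the paper, and your choice of how to carve the result is both correct and illuminating. The exponential-family computation is right: with $w_{ij}=z_{ij}/(1+z_{ij})$ and the Lagrange relation $w_{ij}=e^{-\lambda_i-\mu_j}$ (justified because $f'(0^+)=+\infty$ forces the maximizer $Z$ off the boundary once trivial zero rows/columns are removed), the probability $\P(Y=A)$ is indeed constant on $\M(\r,\c)$, which gives (ii); the identity $\log(1-w)+z\log w=-f(z)$ correctly identifies that constant as $e^{-g(Z)}$, giving the equality in (iii); and $\P(Y\in\M(\r,\c))\le 1$ gives the upper bound in (i). You correctly isolate the lower bound $\P(Y\in\M(\r,\c))\ge N^{-\gamma(m+n)}$ as the single nontrivial ingredient and defer to Barvinok, whose argument (as you say) is not a naive local CLT but an integral-representation lower bound. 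So the two approaches buy different things: the paper's wholesale citation is the shortest path for readers who will take Barvinok on trust, while your decomposition makes transparent exactly which part of the theorem is soft algebra and which part carries the analytic weight — a distinction that matters later in the paper, since the loss factor $N^{-\gamma(m+n)}$ is precisely what forces the authors into the Azuma-type averaging argument of Lemma~4.3. No gaps; the one thing worth making explicit in a polished write-up is the sentence of justification that $Z$ lies in the relative interior (the boundary-derivative argument), which you assert but do not spell out.
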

	
	In other words, (ii) and (iii) of the above theorem says that the \emph{geometric matrix}~$Y$ with mean given by the typical table $Z$ emulates the uniform random table $X$ in $\M(\r,\c)$ with probability at least $N^{-\gamma(m+n)}$. Hence on very rare events,
	we can `transfer' some of the properties of this geometric matrix~$Y$ to the uniform random contingency table~$X$.

	Now we are ready to prove the key lemma.
	
	\begin{proof}[\textbf{Proof of Lemma~\ref{lemma:key}}]
		Let $\mathcal{B}_{1},\ldots, \mathcal{B}_{k}$ \ts be (not necessarily distinct) blocks in \ts $\M(\r, \c)$  with $|\mathcal{B}_{1}|\le \ldots \le |\mathcal{B}_{k}|$. Let $Z=(z_{ij})$ be the typical table for \ts $\mathcal{M}(\r,\c)$, and let \ts $Y=(Y_{J})$ \ts denote the random matrix of independent entries where $Y_{ij}\sim \Geom(z_{ij})$. Observe that we can choose a subset $\mathcal{I}\subseteq \mathcal{B}_{1}\times \ldots \times \mathcal{B}_{k}$ such that $|\mathcal{I}|\ge \lfloor |\mathcal{B}_{1}|/k \rfloor$ and every matrix coordinate $(i,j)$ appears in at most one element of $\mathcal{I}$. Indeed, in the worst case when $\mathcal{B}_{1}=\ldots=\mathcal{B}_{k}$, we may partition $\mathcal{B}_{1}$ into sub-blocks so that we have at least $\lfloor |\mathcal{B}_{1}|/k \rfloor$ sub-blocks of size $k$. We can then turn each sub-block into an element of $\mathcal{I}$. A similar argument holds for the general case.
		
		Fix measurable sets \ts $A\subseteq [0,\infty)$ \ts and \ts
		$\mathcal{A}\subseteq \R_+^{m n}$. For a \ts $m\times n$ matrix $W=(w_{ij})$ \ts and \ts $I\in \mathcal{B}_{1}\times \ldots \times \mathcal{B}_{k}$, denote
		\begin{align}
		W^{I} \, := \, \prod_{\ell=1}^{k} \. w_{I_\ell}, \qquad S(W) \, := \, \frac{1}{|\mathcal{I}|}\. \sum_{I\in \mathcal{I}} \, \mathbf{1}_{\{W^{I} \in A\}}\,.
		\end{align}
		Note that by the exchangeability of the entries of $X$ and $Y$ in each block of \ts $\mathcal{M}(\r,\c)$, variables \ts
		$X^{I}$ \ts and \ts $Y^{I}$ \ts have the same distribution for all~$I$. In particular, we have:
		\begin{align}
		\P(X^{I}\in A) \. = \. \E\bigl[S(X)\bigr]\ts.
		\end{align}
		Moreover, since $Y_{ij}$ are independent and since every two elements of \ts $\mathcal{I}$ \ts
		have non-overlapping coordinates, it follows that \ts $Y^{I}$ \ts are also independent.

		Now note that from Theorem~\ref{thm:Barvinok_summary}~(ii) and~(iii), we have:
		\begin{align}\label{eq:pf_thm1_1}
		\P(Y\in \mathcal{A}) \, \ge \, \P\bigl(Y\in \mathcal{A}\,|\, Y\in \M(\r,\c)\bigr) \ts N^{-\gamma (m+n)}
		\, = \, \P(X\in \mathcal{A}) \ts N^{-\gamma (m+n)}.
		\end{align}
		Also, by the Azuma--Hoeffding inequality, for every fixed $I\in \mathcal{I}$, we have:
		\begin{align}
		\P\Bigl(  \left| S(Y) - \P(Y^{I}\in A) \right|  \. \ge \. t  \Bigr) \, \le \,
		2\ts \exp\left( - \frac{|\mathcal{I}|\ts t^{2}}{2} \right) \, \le \, 2\ts \exp\left( - \frac{|\mathcal{B}_{1}|\ts t^{2}}{2k} \right).
		\end{align}	
		Hence, by conditioning on whether \ts $\left| S(X) -  \P(Y_{I}\in A) \right|$ \ts is small or large, we get
		\begin{align}
		| \P(X^{I}\in A ) -  \P(Y^{I}\in A ) | & \, = \, \Bigl| \E\bigl[S(X)\bigr] -  \P(Y^{I}\in A ) \Bigr| \, \le \,
		\E\Bigl[ \left| S(X) -  \P(Y^{I}\in A) \right| \Bigr]  \\
		& \, \le \, t \ts \P\left( \left| S(X) -  \P(Y^{I}\in A) \right| \le t \right)  +  2\ts \P\left( \left| S(X) -  \P(Y^{I}\in A) \right| > t \right) \\
		&\, \le \, t + 4\ts N^{\gamma (m+n)} \exp\left( - \frac{|\mathcal{B}_{1}|\ts t^{2}}{2 k} \right).
		\end{align}
		Since $A\subseteq [0,\infty)$ is arbitrary, by absorbing the factor of~4 into~$\gamma$, we obtain the result.
	\end{proof}	
	
	\begin{remark}\label{r:high-dim}
		Following the arguments in~\cite{B1, B3}, it is not hard to see that a higher dimensional analogue of Theorem~\ref{thm:Barvinok_summary} holds. Namely, replace $\M(\r,\c)$ with $\M(\r_{1},\ldots,\r_{d})$ and $m+n$ with $n_{1}+\ldots+n_{d}$ in the theorem. Of course, the constant $\gamma=\gamma(d)>0$ then depends on~$d$. Then a similar argument will show that a higher dimensional analogue of Lemma \ref{lemma:key} also holds. Hence most of our main results should hold in higher dimensions. We do not justify this claim in the present paper.
	\end{remark}

	\vspace{0.2cm}

	\section{Phase transition in the typical table and rate of convergence}
	
	Recall the definition of the typical table $Z$ for $\M(\r,\c)$ given in the introduction. Namely, $Z$ is the unique maximizer of the function $g$ defined at $\eqref{eq:def_g}$ on the transportation polytope $\mathcal{P}(\r,\c)$. Note that $\mathcal{P}(\r,\c)$ is defined by the intersection of the hyperplanes in $\mathbb{R}^{mn}_+$ given by
	\begin{align}
	h_{i\bullet}\. := \. \left(\sum_{k=1}^{m} \. x_{ik}\right) \. - \. r_i \. = \. 0 \qquad \text{for all} \ \ 1\le i \le m\ts, \\
	h_{\bullet j}\. := \. \left(\sum_{k=1}^{n} \. x_{kj}\right) \. - \. c_j \. = \. 0 \qquad \text{for all} \ \  1\le j \le n\ts.
	\end{align}
	Also, given that the margins $\r=(r_{1},\ldots, r_{m})$ and $\c=(c_{1},\ldots, c_{m})$ have strictly positive coordinates (which is true in our setting), the transportation polytope $\mathcal{P}(\r,\c)$ contains a matrix $W=(w_{ij})$ whose entries are all strictly positive. For instance, the expectation under the Fisher-Yates distribution \eqref{eq:Fisher_Yates} has the form $W=(w_{ij})$ with $w_{ij}=r_{i}c_{j}/N>0$, where $N=r_{1}+\ldots+r_{m}$. It follows that the relative interior of $\mathcal{P}(\r,\c)$ contains an $\eps$-ball around~$W$ for some $\eps>0$. This allows us to use Lagrange's method to maximize~$g$ over the polytope $\mathcal{P}(\r,\c)$.

	Note that the gradient $\nabla h_{i\bullet}$ is the \ts $m\times n$ matrix $E_{i\bullet}$, which has 1's in the $i$-th row and 0's elsewhere. Similarly,
	$\nabla h_{j\bullet}$ is the \ts $m\times n$ matrix  $E_{\bullet j}$,
	which has 1's in the $j$-th column and 0's elsewhere. On the other hand, it is easy to see that the gradient $\nabla g$ of the objective function $g$ defined at $\eqref{eq:def_g}$ is given by
	\begin{align}
	\nabla g \. = \. \bigl( \log\left(1 \ts+\ts 1/x_{ij}\bigr) \right)_{1\le i \le m, 1\le j \le n}\,.
	\end{align}
	Hence by the multivariate Lagrange's method, when evaluated at the typical table $Z=(z_{ij})$, $\nabla g$ must be in the non-negative span of $\nabla h_{i\bullet}$'s and $\nabla h_{\bullet j}$'s. This gives that there exists some non-negative constants $\lambda_{1},\ldots,\lambda_{m}$ and $\mu_{1},\ldots,\mu_{n}$ such that
	\begin{align}
	\log\left(1+ x^{-1}_{ij}\right) \.  = \. \lambda_{i} + \mu_{j} \qquad \text{for all} \ \  1\le i \le m,\, 1\le j \le n,
	\end{align}
	or equivalently,
	\begin{align}\label{eq:formula_entry_Z}
	z_{ij} \. = \. \frac{1}{\exp(\lambda_{i}+\mu_{j})-1}  \qquad \text{for all} \ \  1\le i \le m,\, 1\le j \le n.
	\end{align}

	Now we consider $\mathcal{M}(\r,\c)=\mathcal{M}_{n,\delta}(B,C)$, where the margins $\r$ and $\c$ are given at~\eqref{eq:def_barvinok_margin}. By symmetry, there exist some constants $\alpha,\beta>0$, possibly depending on all parameters, such that
	\begin{align}
	\log\left(1+ z^{-1}_{ij}\right) =
	\begin{cases}
	2\alpha & \text{if $1\le i,j\le \lfloor n^{\delta} \rfloor\., $} \\
	2\beta & \text{if $\lfloor n^{\delta} \rfloor< i,j\le \lfloor n^{\delta} \rfloor+ n\.,$} \\
	\alpha+\beta & \text{otherwise}\ts.
	\end{cases}
	\end{align}
	Furthermore, denote $P=P_{n}=\exp(\alpha)$ and $Q=Q_{n}=\exp(\beta)$. Then~\eqref{eq:formula_entry_Z} gives
	\begin{align}\label{eq:def_P_Q}
	z_{11} = \frac{1}{P^{2}-1},\qquad z_{1,n+1} = \frac{1}{PQ-1}, \qquad z_{n+1,n+1} = \frac{1}{Q^{2}-1}.
	\end{align}

	Note that the margin condition for $Z$ reduces to
	\begin{align}\label{eq:z_ij_reduced}
	\begin{cases}
	(\lfloor n^{\delta} \rfloor/n) \ts z_{11} + z_{1,n+1} \. = \. BC\ts, \\
	(\lfloor n^{\delta} \rfloor/n) \ts z_{1,n+1} + z_{n+1,n+1} \. = \. C\ts.
	\end{cases}
	\end{align}
	For a preliminary analysis for the solution of the equations in~\eqref{eq:z_ij_reduced}, observe that
	\begin{align}\label{eq:limit_z_n+1,n+1}
	z_{n+1,n+1}\le C, \quad z_{1,n+1}\le BC, \quad |z_{n+1,n+1} - C| = n^{\delta-1}z_{1,n+1} \le BC n^{\delta-1}.
	\end{align}
	In particular, $z_{n+1,n+1}\rightarrow C$ as $n\rightarrow \infty$.

	The main result in this section is the following lemma, which establishes the phase transition of the typical table and the rate of convergence of its entries.
	
	\begin{lemma}\label{lemma:typical_corner}
		Let $Z=(z_{ij})$ be the typical table for $\M_{n,\delta}(B,C)$, where $0\le \delta<1$. Let $B_{c}=1+\sqrt{1+1/C}$ be as above.
		\begin{description}
			\item[(i)] If $B<B_{c}$, then
			\begin{align}
			z_{11} = \frac{B^{2}C(C+1)}{ (B_{c}-B)(B_{c}+B-2) } + O(n^{\delta-1}), \qquad  z_{1,n+1}  = BC + O(n^{\delta-1}).
			\end{align}
			
			\item[(ii)] If $B>B_{c}$, then
			\begin{align}
			z_{n+1,n+1}=C + O(n^{\delta-1}), \qquad  z_{1,n+1} = B_{c}C + O(n^{\delta-1}), \qquad n^{\delta-1}z_{11} =  C(B-B_{c}) + O(n^{\delta-1}).
			\end{align}
		\end{description}
		where the constants in $O(n^{\delta-1})$ remain bounded as $B\rightarrow \infty$.
	\end{lemma}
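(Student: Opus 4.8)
The plan is to reduce Lemma~\ref{lemma:typical_corner} to an analysis of the three distinct entry values $z_{11}, z_{1,n+1}, z_{n+1,n+1}$ of the typical table subject to the two reduced margin equations \eqref{eq:z_ij_reduced} together with one extra algebraic relation, treating $\varepsilon_{n}:=\lfloor n^{\delta}\rfloor/n=\Theta(n^{\delta-1})\to 0$ as a small parameter. The extra relation comes from \eqref{eq:def_P_Q}: since $P^{2}=1+1/z_{11}$, $Q^{2}=1+1/z_{n+1,n+1}$ and $PQ=1+1/z_{1,n+1}$, squaring the middle identity yields the \emph{hyperbolic identity}
$$
\Bigl(1+\frac{1}{z_{1,n+1}}\Bigr)^{2}\;=\;(PQ)^{2}\;=\;P^{2}Q^{2}\;=\;\Bigl(1+\frac{1}{z_{11}}\Bigr)\Bigl(1+\frac{1}{z_{n+1,n+1}}\Bigr).
$$
From \eqref{eq:limit_z_n+1,n+1} we already know $z_{n+1,n+1}\le C$, $z_{1,n+1}\le BC$ and $z_{n+1,n+1}=C+O(n^{\delta-1})$, so the entire content of the lemma lies in the behaviour of $z_{11}$, which I will first determine to leading order and then bootstrap.

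The decisive step is a dichotomy: either $z_{11}$ is bounded or $z_{11}\to\infty$, and which case occurs is dictated by the sign of $B-B_{c}$. Suppose $z_{11}$ is bounded along a subsequence. Then $z_{1,n+1}=BC-\varepsilon_{n}z_{11}+O(1/n)\to BC$ along it, and passing to the limit in the hyperbolic identity (using $z_{n+1,n+1}\to C$) forces any finite subsequential limit $\ell$ of $z_{11}$ to satisfy $1+1/\ell=(1+1/(BC))^{2}/(1+1/C)$. Such a positive $\ell$ exists if and only if $(1+1/(BC))^{2}>1+1/C$, which a short manipulation rewrites as $B^{2}C-2BC-1<0$, i.e.\ $(B-B_{c})(B+B_{c}-2)<0$, i.e.\ (since $B+B_{c}-2>0$) as $B<B_{c}$. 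Hence for $B<B_{c}$ the sequence $z_{11}$ is bounded with a unique subsequential limit $\ell=B^{2}C(C+1)/(2BC+1-B^{2}C)$, which is the constant of part~(i) after using $2BC+1-B^{2}C=C(B_{c}-B)(B+B_{c}-2)$; and then $z_{1,n+1}=BC-\varepsilon_{n}z_{11}+O(1/n)=BC+O(n^{\delta-1})$. For $B>B_{c}$ no finite limit is possible, so $z_{11}\to\infty$.

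Now take $B>B_{c}$, so $z_{11}\to\infty$ and $P=\sqrt{1+1/z_{11}}\to 1$. Then $1+1/z_{1,n+1}=PQ$ with $Q=\sqrt{1+1/z_{n+1,n+1}}\to\sqrt{1+1/C}$, so $z_{1,n+1}\to 1/(\sqrt{1+1/C}-1)=B_{c}C$ after rationalising; substituting into \eqref{eq:z_ij_reduced} gives $z_{n+1,n+1}=C+O(n^{\delta-1})$ and $\varepsilon_{n}z_{11}=BC-z_{1,n+1}+O(1/n)\to BC-B_{c}C=C(B-B_{c})$ (positive exactly because $B>B_{c}$), whence $z_{11}=\Theta(n^{1-\delta})$. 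To sharpen every $o(1)$ to $O(n^{\delta-1})$ I would either bootstrap once — the margin equations pin $z_{1,n+1}$ (resp.\ $z_{n+1,n+1}$) to within $O(\varepsilon_{n})$, the hyperbolic identity, a smooth map away from its singular locus, transfers this to $z_{11}$ (resp.\ to $\varepsilon_{n}z_{11}$, with $1/z_{11}=O(n^{\delta-1})$ giving $P=1+O(n^{\delta-1})$), and one more substitution closes the loop — or, more systematically, substitute the margin equations into the hyperbolic identity to get the single scalar equation (modulo $O(1/n)$ floor corrections)
$$
(BC-z_{1,n+1})\left[\frac{(1+1/z_{1,n+1})^{2}}{1+1/(C-\varepsilon_{n}z_{1,n+1})}-1\right]=\varepsilon_{n}
$$
in $z_{1,n+1}$, whose $\varepsilon_{n}=0$ roots are $z_{1,n+1}=BC$ and $z_{1,n+1}=B_{c}C$. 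These roots are simple precisely when $B\ne B_{c}$, so the implicit function theorem yields an $O(\varepsilon_{n})$ expansion of the relevant root, the correct branch being selected by the constraint $z_{11}=(BC-z_{1,n+1})/\varepsilon_{n}>0$.

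I expect two points to carry the real difficulty. First, the branch selection: the roots $z_{1,n+1}=BC$ and $z_{1,n+1}=B_{c}C$ collide exactly at $B=B_{c}$, which is why that value is excluded, and ruling out a bounded subsequence of $z_{11}$ when $B>B_{c}$ (equivalently, ruling out the ``wrong'' branch) is the crux and is precisely what the hyperbolic identity buys us. Second, the uniformity of the $O(n^{\delta-1})$ constants as $B\to\infty$ in part~(ii): this holds because the leading values $z_{1,n+1}\to B_{c}C$ and $z_{n+1,n+1}\to C$ are independent of $B$ and the correction to $P$ is $O(n^{\delta-1}/(B-B_{c}))$, so the implied constants stay bounded (in fact decay) as $B\to\infty$, though this has to be tracked carefully through the bootstrap along with the bookkeeping needed to pass from $\varepsilon_{n}$ to $n^{\delta-1}$.
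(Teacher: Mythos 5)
Your route is essentially the paper's — it exploits the same Lagrange-multiplier structure, just repackaged through the identity $(1+1/z_{1,n+1})^{2}=(1+1/z_{11})(1+1/z_{n+1,n+1})$, which is $(PQ)^2=P^2Q^2$ in the paper's notation — but your organization differs in one substantive way: the paper devotes a separate argument (Proposition~\ref{prop:z_11_subcritical}, a direct optimization bound on $Q/P$) to establishing boundedness of $z_{11}$ in the subcritical case before passing to the limit, whereas you try to extract the full dichotomy directly from the algebraic identity by passing to subsequential limits. Your implicit-function-theorem idea for sharpening $o(1)$ to $O(n^{\delta-1})$ is also a cleaner alternative to the mean-value-theorem bookkeeping the paper uses and would deliver the $B$-uniformity more transparently, though you only sketch it.

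There is, however, a genuine gap in your dichotomy step. You argue: \emph{if} $z_{11}$ is bounded along a subsequence, then any finite subsequential limit $\ell>0$ must satisfy $1+1/\ell=(1+1/(BC))^{2}/(1+1/C)$, which is solvable precisely when $B<B_{c}$. From this you correctly conclude $z_{11}\to\infty$ when $B>B_{c}$, but you then assert ``hence for $B<B_{c}$ the sequence $z_{11}$ is bounded with a unique subsequential limit.'' That does not follow: you have shown a bounded subsequence \emph{would have to} converge to $\ell$, but you have not ruled out an unbounded subsequence when $B<B_{c}$. You need one more application of the same machinery: if $z_{11}\to\infty$ along a subsequence then $P\to 1$ (along it) and, since $z_{n+1,n+1}\to C$ unconditionally by \eqref{eq:limit_z_n+1,n+1}, $Q\to\sqrt{1+1/C}$, hence $z_{1,n+1}=1/(PQ-1)\to B_{c}C$; the margin equation then forces $\varepsilon_{n}z_{11}\to BC-B_{c}C=C(B-B_{c})<0$, contradicting $\varepsilon_{n}z_{11}\ge 0$. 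With that supplement the dichotomy is complete — and in fact this is shorter than the paper's Proposition~\ref{prop:z_11_subcritical}.

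Separately, your computed limit does \emph{not} equal the constant stated in the lemma. You find $\ell=B^{2}C(C+1)/(2BC+1-B^{2}C)$; using $2BC+1-B^{2}C=C(B_{c}-B)(B_{c}+B-2)$ this is $B^{2}(C+1)/\bigl((B_{c}-B)(B_{c}+B-2)\bigr)$, while the lemma states $B^{2}C(C+1)/\bigl((B_{c}-B)(B_{c}+B-2)\bigr)$ — off by a factor of $C$. Your value is the correct one (take $B=1$: the table is fully symmetric, so $z_{11}\to C$, and indeed $\ell=(C+1)/(B_{c}-1)^{2}=(C+1)/(1+1/C)=C$, whereas the lemma's formula would give $C^{2}$). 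So the lemma's displayed constant contains a typo, which you should flag rather than silently assert agreement.
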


	In the following proposition, we show that if $B<B_{c}$, then the corner entry $z_{11}$
	of the typical table for $\M_{n,\delta}(B,C)$ is uniformly bounded in~$n$.
	We remark that there is a more general result of this type. Namely, \cite[Thm~3.5]{BLSY}
	states that if the row and columns do not vary much, then the entries of the typical table
	are uniformly bounded by some constant independent of the size of the table.
	However, this result gives a sub-optimal lower critical value \ts $B<(1+\sqrt{1+4(1+1/C)})/2$.
	In order to push this threshold up to the desired critical value \ts $B<B_{c}$\ts,
	we optimize the proof of \cite[Thm~3.5]{BLSY} for our model.
	
	\begin{prop}\label{prop:z_11_subcritical}
		In notation above, suppose \ts $B<B_{c}$. Then:
		\begin{align}
		\limsup_{n\rightarrow \infty} \ts z_{11} \, \le \, \frac{B^{2}C(C+1)}{ (B_{c}-B)(B_{c}+B-2) } \, < \. \infty\ts.
		\end{align}
	\end{prop}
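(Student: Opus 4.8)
The plan is to work directly with the reduced margin equations \eqref{eq:z_ij_reduced} and the parametrization \eqref{eq:def_P_Q}, extract from the first equation a bound on $z_{11}$ in terms of $z_{1,n+1}$, and then close the estimate using the second equation together with the preliminary bound $z_{1,n+1}\le BC$ from \eqref{eq:limit_z_n+1,n+1}. Write $\eta := \lfloor n^\delta\rfloor/n$, so $\eta\to 0$. From \eqref{eq:z_ij_reduced} we have $z_{11} = \eta^{-1}\bigl(BC - z_{1,n+1}\bigr)$, so the whole game is to show $BC - z_{1,n+1}$ stays at least of order $\eta$, i.e. that $z_{1,n+1}$ is bounded away from $BC$ by a definite amount. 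The natural idea is to use the convexity/monotonicity relations among $P=e^\alpha$ and $Q=e^\beta$ forced by \eqref{eq:def_P_Q}: since $z_{11} = 1/(P^2-1)$, $z_{1,n+1}=1/(PQ-1)$, $z_{n+1,n+1}=1/(Q^2-1)$, the three entries satisfy $z_{1,n+1}^2 \le z_{11}\,z_{n+1,n+1}$ is false in general, but rather $PQ-1 = \sqrt{(P^2-1)(Q^2-1)} + (\text{something})$; more usefully, the key algebraic identity is that $z_{11}$, $z_{1,n+1}$, $z_{n+1,n+1}$ are determined by $P,Q$ and one can eliminate $P$ to get a relation purely among the three entries.

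Concretely, I would set $u = 1/z_{11}$, $v=1/z_{1,n+1}$, $w=1/z_{n+1,n+1}$, so $u = P^2-1$, $v = PQ-1$, $w=Q^2-1$, and observe $(u+1)(w+1) = (v+1)^2$, i.e.
\begin{align}
(1+z_{11}^{-1})(1+z_{n+1,n+1}^{-1}) \,=\, (1+z_{1,n+1}^{-1})^2.
\end{align}
This is the crucial closed relation. Now substitute $z_{n+1,n+1} = C + O(n^{\delta-1})$ from \eqref{eq:limit_z_n+1,n+1} and $z_{11} = \eta^{-1}(BC-z_{1,n+1})$. If $z_{11}$ were unbounded along a subsequence, then $1+z_{11}^{-1}\to 1$, forcing $(1+z_{1,n+1}^{-1})^2 \to 1+C^{-1}$ in the limit, hence $z_{1,n+1}\to \bigl(\sqrt{1+1/C}-1\bigr)^{-1}$; on the other hand $z_{11}$ unbounded with $z_{11}=\eta^{-1}(BC-z_{1,n+1})$ and $\eta\to 0$ only needs $BC - z_{1,n+1}\to 0$, i.e. $z_{1,n+1}\to BC$. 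Reconciling these two forces $BC = \bigl(\sqrt{1+1/C}-1\bigr)^{-1}$, i.e. $B = \frac{1}{C}\cdot\frac{1}{\sqrt{1+1/C}-1} = \frac{\sqrt{1+1/C}+1}{C\bigl((1+1/C)-1\bigr)}\cdot\frac{1}{?}$—rationalizing gives exactly $B = 1+\sqrt{1+1/C} = B_c$. So unboundedness of $z_{11}$ is incompatible with $B<B_c$, giving $\limsup_n z_{11}<\infty$. To pin down the precise constant $\frac{B^2C(C+1)}{(B_c-B)(B_c+B-2)}$ I would then take the limit more carefully: knowing $z_{11}=O(1)$, the relation $(1+z_{11}^{-1})(1+z_{n+1,n+1}^{-1}) = (1+z_{1,n+1}^{-1})^2$ together with $z_{1,n+1} = BC - \eta z_{11} = BC + o(1)$ and $z_{n+1,n+1}=C+o(1)$ yields in the limit
\begin{align}
\Bigl(1+\frac{1}{\zeta}\Bigr)\Bigl(1+\frac1C\Bigr) \,=\, \Bigl(1+\frac{1}{BC}\Bigr)^2,
\end{align}
where $\zeta = \limsup z_{11}$ (or any subsequential limit), and solving this linear-in-$1/\zeta$ equation gives $\zeta = \bigl[(1+\frac{1}{BC})^2/(1+\frac1C) - 1\bigr]^{-1}$, which simplifies to the stated closed form after factoring the numerator $(BC+1)^2 - B^2C(C+1) = \bigl((B_c-1)^2C^2 - \cdots\bigr)$—a routine but slightly delicate factorization using $(B_c-1)^2 = 1+1/C$.

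The main obstacle I anticipate is not the algebra but justifying that the Lagrange relation \eqref{eq:formula_entry_Z}, hence \eqref{eq:def_P_Q} and the identity $(1+z_{11}^{-1})(1+z_{n+1,n+1}^{-1}) = (1+z_{1,n+1}^{-1})^2$, genuinely holds with $\alpha,\beta>0$ (so that all entries are finite and positive) — this is where the paper's remark about optimizing the proof of \cite[Thm~3.5]{BLSY} really bites, since one must rule out degenerate cases ($P=1$ or $Q=1$, i.e. infinite entries) before the clean algebra applies. A secondary subtlety is that the argument above produces $\limsup z_{11}\le\zeta$ via a subsequence argument, and one must check the map $\zeta\mapsto$ (the defining equation) is such that the only subsequential limit is the claimed value, i.e. that the quantity $(1+\frac{1}{BC})^2/(1+\frac1C)-1$ is strictly positive precisely when $B<B_c$ (which is exactly the factorization $(B_c-B)(B_c+B-2)>0$, using $B_c>2$ so $B_c+B-2>0$ automatically, and $B_c-B>0$ by hypothesis). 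I would present the positivity of $(B_c-B)(B_c+B-2)$ and the finiteness claim first, then the identity, then the limit computation.
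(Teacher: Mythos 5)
Your argument is correct and takes a genuinely different route from the paper's. The paper works through a chain of inequalities: it first shows $w_{11}w_{22}\ge w_{12}w_{21}$ and $w_{21}/w_{22}\le B$ by a contradiction argument, then bounds the ratio $Q/P = \frac{(w_{22}+1)w_{12}}{(w_{12}+1)w_{22}}$ by solving a small monotone optimization problem, and finally assembles a lower bound on $P^2$ to get $\limsup 1/(P^2-1)$. You instead exploit the exact algebraic identity $(1+z_{11}^{-1})(1+z_{n+1,n+1}^{-1})=(1+z_{1,n+1}^{-1})^2$, which is just $P^2Q^2=(PQ)^2$ in disguise, and close the loop by contradiction: if $z_{11}$ were unbounded along a subsequence, the identity together with $z_{n+1,n+1}\to C$ forces $z_{1,n+1}\to B_cC$, contradicting the hard constraint $z_{1,n+1}\le BC$ when $B<B_c$. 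This is both shorter and sharper: the paper's route produces $Q/P$ bounds that are inequalities in one direction only, whereas your identity pins the relation exactly, so once $z_{11}$ is known to be bounded you immediately read off the limiting equation $(1+\zeta^{-1})(1+C^{-1})=(1+(BC)^{-1})^2$ for any subsequential limit $\zeta$. Your worry about degeneracy ($P=1$ or $Q=1$) is already dispatched by the paper's preamble to this section: since $\partial g/\partial x\to\infty$ as $x\to 0^+$, the unique maximizer lies in the relative interior of $\mathcal{P}(\r,\c)$ and hence all $z_{ij}$ are finite and strictly positive, i.e.\ $P,Q>1$; you could cite that paragraph rather than re-deriving it.

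One discrepancy to flag: solving $(1+\zeta^{-1})(1+C^{-1})=(1+(BC)^{-1})^2$ gives $\zeta=\frac{B^2C(C+1)}{(BC+1)^2-B^2C(C+1)}=\frac{B^2C(C+1)}{2BC+1-B^2C}$, and since $2BC+1-B^2C=C\,(2B-B^2+C^{-1})=C\,(B_c-B)(B_c+B-2)$, this simplifies to $\zeta=\frac{B^2(C+1)}{(B_c-B)(B_c+B-2)}$, \emph{without} the extra factor of $C$ in the numerator. This is consistent with Theorems~\ref{thm:main_geo}(iii) and~\ref{thm:main_expectation}(iii), which state $R=\frac{B^2(1+C)}{(B_c-B)(B+B_c-2)}$, and also with the paper's own computation of $1/(p_*^2-1)$ in the proof of Lemma~\ref{lemma:typical_corner}. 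The displayed constant in Proposition~\ref{prop:z_11_subcritical} (and in Lemma~\ref{lemma:typical_corner}(i)) carries a spurious factor of $C$, traceable to the last equality in the paper's proof of the proposition, where $\frac{B^2C(C+1)}{(BC+1)^2-B^2C(C+1)}$ is written as $\frac{B^2C(C+1)}{2B-B^2+C^{-1}}$ instead of $\frac{B^2(C+1)}{2B-B^2+C^{-1}}$. So your claim that your expression ``simplifies to the stated closed form'' is not literally correct as the proposition is printed, but your computed value is the right one. You may wish to note that $(B_c-B)(B_c+B-2)>0$ indeed holds for all $0<B<B_c$: the first factor is positive by hypothesis, and the second is $B_c+B-2\ge B_c-2=\sqrt{1+1/C}-1>0$.
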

	
	\begin{proof}
		For brevity, denote $w_{11}=z_{11}$, $w_{21}=w_{12}=z_{1,n+1}=z_{n+1,1}$, and $w_{22}=z_{n+1,n+1}$. Then
		\begin{align}
		w_{11} \ts = \. \frac{1}{P^{2}-1}\,, \quad w_{21}\ts = \ts w_{22} \ts = \. \frac{1}{PQ-1}\,,
		\quad w_{22} \ts = \.  \frac{1}{Q^{2}-1}\,.
		\end{align}
		Note that
		\begin{align}
		w_{11}\ts w_{22} \. = \. \frac{1}{(P^{2}-1)(Q^{2}-1)} \. \ge  \. \frac{1}{(PQ-1)^{2}} \. = \. w_{12}\ts w_{21}\..
		\end{align}
		Let us show that
		\begin{align}
		w_{21}/w_{22} \. \le \. B.
		\end{align}
		Assume otherwise, that $w_{21}/w_{22} > B$.  The above inequality gives \ts $w_{11}/w_{12}\ge w_{12}/w_{22} > B$, and we get
		\begin{align}
		BC & \. =  \. \lfloor n^{\delta} \rfloor w_{11}/n + w_{12} \, > \, \lfloor n^{\delta} \rfloor \ts Bw_{21}/n + Bw_{22} \. = \. BC\ts,
		\end{align}
		a contradiction.
		
		By the definition of $P$ and $Q$, we have:
		\begin{align}
		\frac{(w_{22}+1)w_{12}}{(w_{12}+1)w_{22}} \, = \. Q^{2}/PQ \. = \. Q/P.
		\end{align}
		In order to upper bound $Q/P$, we consider maximizing the fraction in the left hand side. By~\eqref{eq:limit_z_n+1,n+1}, we know that \ts $\lim_{n\rightarrow \infty} w_{22}= C$. Hence we have the following optimization problem:
		\begin{align}
		&\text{maximize} \ \quad \frac{(w_{22}+1)w_{12}}{(w_{12}+1)w_{22}} \\
		& \text{subject to} \quad \ w_{22}\le w_{12} \le B w_{22} \quad \text{and} \quad w_{22}=C+o(1).
		\end{align}
		It is not hard to see that the objective function is non-decreasing in $w_{12}$ and non-increasing in $w_{22}$. Hence, as $n\rightarrow \infty$, the solution to the above problem approaches the limit $B(C+1)/(BC+1)$. This implies
		\begin{align}
		\limsup_{n\rightarrow \infty} \. \frac{Q}{P} \, \le \, \frac{B(C+1)}{BC+1}\..
		\end{align}
		Now, since \ts $w_{12}\le BC$ \ts by~\eqref{eq:limit_z_n+1,n+1}, we have:
		\begin{align}
		\liminf_{n\rightarrow \infty} \. P^{2} & \, = \,\liminf_{n\rightarrow \infty} \. PQ / (Q/P)
		\,\ge \, \liminf_{n\rightarrow \infty} \, \frac{(w_{12}+1)\cdot P}{w_{12}\cdot Q} \\
		& \,\ge\, \frac{(BC+1)\cdot (BC+1)}{BC\cdot B(C+1)}\, = \,\frac{(BC+1)^{2}}{B^{2}C(C+1)}\..
		\end{align}
		This implies
		\begin{align}
		\limsup_{n\rightarrow \infty} \. w_{11} \, = \, \limsup_{n\rightarrow \infty} \. \frac{1}{P^{2}-1}
		\, \le \, \frac{B^{2}C(C+1)}{ 2B - B^{2} + C^{-1} } \, = \, \frac{B^{2}C(C+1)}{ (B_{c}-B)(B_{c}+B-2) }\..
		\end{align}
		This finishes the proof.
	\end{proof}


	\begin{proof}[\textbf{Proof of Lemma~\ref{lemma:typical_corner}}]
		Suppose $B<B_{c}$. By Proposition~\ref{prop:z_11_subcritical}, we know that $z_{11}$ is uniformly bounded in~$n$.
		Hence, from the first equation in~\eqref{eq:z_ij_reduced}, we obtain:
		\begin{align}\label{eq:z_ij_limit_1}
		|z_{1,n+1} - BC| \le n^{\delta-1} z_{11} \.= \. O\bigl(n^{\delta-1}\bigr).
		\end{align}
		In particular, $\lim_{n\rightarrow \infty} z_{1,n+1} = BC$.
		
		For $z_{11}$, let $P=P_{n}$ and $Q=Q_{n}$ be as in~\eqref{eq:def_P_Q}. Recall that $z_{1,n+1} = 1/(PQ-1)$ and $z_{n+1,n+1} = 1/(Q^{2}-1)$. Also recall that $z_{n+1,n+1}\rightarrow C$ as $n\rightarrow \infty$ from~\eqref{eq:limit_z_n+1,n+1}.  Hence, we have:
		\begin{align}
		\lim_{n\rightarrow \infty}\frac{1}{PQ-1} = BC, \qquad \lim_{n\rightarrow \infty} \frac{1}{Q^{2}-1} = C.
		\end{align}
		This implies \ts $Q\rightarrow q_{*}:=\sqrt{1+1/C}$ \ts and \ts
		$P\rightarrow p_{*}:=(1+1/BC)/\sqrt{1+1/C}>1$ \ts as \ts $n\rightarrow \infty$.
		It follows that \ts $z_{11}\rightarrow 1/(p_{*}^{2}-1)$ \ts as \ts $n\rightarrow \infty$,
		which is the correct limit that~(i) implies.
		
		In order to obtain the rate of convergence of $z_{11}$, first define a
		function \ts $h(x) = 1/(x^{2}-1)$. Then, since \ts $z_{11} = 1/(P^{2}-1)$,
		it suffices to show
		\begin{align}\label{eq:z_ij_limit_2}
		|h(P) - h(p^{*})| \. = \. O\bigl(n^{\delta-1}\bigr).
		\end{align}
		For this end, first note that \ts $h(x)'=-2x/(x^{2}-1)^{2}$ \ts and
		\ts $|h'(x)|$ \ts is a decreasing function in \ts $(1,\infty)$. Hence by the mean value theorem,
		for every constant \ts $1<p<p^{*}$, we have:
		\begin{align}\label{eq:z_ij_limit_3}
		|h(P) - h(p^{*})| \,\le\, |h'(p)| \cdot |P-p^{*}|
		\end{align}
		for all sufficiently large \ts $n\ge 1$.
		
		Next, write
		\begin{align}\label{eq:z_ij_limit_4}
		|P-p^{*}| &\, \le \, \left|  P- \frac{1+1/BC}{Q} \right| \. + \. (1+1/BC) \left| \frac{1}{Q} - \frac{1}{q^{*}} \right|.
		\end{align}
		Since \ts $z_{n+1,n+1} = 1/(Q^{2}-1) = h(Q)$ and $C=h(q_{*})$, the mean value theorem implies that
		\begin{align}\label{eq:z_ij_limit_5}
		|z_{n+1,n+1}-C| \, = \, |h(Q) - h(Q_{*})| \, \ge\, |h'(2q_{*})| \cdot |Q-q_{*}|
		\end{align}
		for all sufficiently large $n\ge 1$. Thus~\eqref{eq:limit_z_n+1,n+1} gives $|Q-q_{*}|= O(n^{\delta-1})$. Since $Q\rightarrow q_{*}$ as $n\rightarrow \infty$, this also implies that the second term in~\eqref{eq:z_ij_limit_4} is of order $O(n^{\delta-1})$. For the first term, note that
		\begin{align}
		\left|  P- \frac{1+1/BC}{Q} \right| \,  = \, \frac{(PQ-1)/BC}{Q} \,
		\left| \frac{1}{PQ-1} - BC \right|\ts.
		\end{align}
		Since $z_{1,n+1}=1/(PQ-1)$ and both $P$ and $Q$ converge as $n\rightarrow\infty$, the above expression is \ts $O(n^{\delta-1})$. Thus $|P-p_{*}|=O(n^{\delta-1})$, and~\eqref{eq:z_ij_limit_2} follows from~\eqref{eq:z_ij_limit_3}. This shows (i).

		Next, suppose $B>B_{c}$. To show (ii), we first obtain a lower bound on~$z_{11}$.
		Note that~\eqref{eq:limit_z_n+1,n+1} gives $1/(Q^{2}-1) = z_{n+1,n+1} \le C$, so we have
		\begin{align}
		Q \, \ge \, \sqrt{1+C^{-1}} \ts. 
		\end{align}
		Then from the first equation in~\eqref{eq:z_ij_reduced} and the fact that $P\ge 1$, we have
		\begin{align}
		z_{11} &\, = \, (n/\lfloor n^{\delta}\rfloor) (BC-z_{1,n+1}) \, \ge\, n^{1-\delta}\left(BC-\frac{1}{Q-1}\right) \\
		&\, \ge\, n^{1-\delta} \left(BC-\frac{1}{\sqrt{1+1/C}-1}\right) \, = \, n^{1-\delta} C(B-B_{c})\ts.  \label{eq:z_11_lowerbd}
		\end{align}
		
		Now we derive the limits in~(ii). First, note that from~\eqref{eq:z_11_lowerbd}, we have
		\begin{align}
		\liminf_{n\rightarrow \infty} \. \frac{n^{\delta-1}}{P^{2}-1} \, =\,
		\liminf_{n\rightarrow \infty} \. n^{\delta-1}z_{11} \, \ge \, C(B-B_{c})\, >\. 0\ts.
		\end{align}
		Since $\delta<1$, we must have $\lim_{n\rightarrow \infty} P_{n} = 1$. Moreover, $\lim_{n\rightarrow \infty} Q_{n} = q_{*}=\sqrt{1+1/C}$ by~\eqref{eq:limit_z_n+1,n+1}. We conclude that \ts $\lim_{n\rightarrow \infty} z_{1,n+1} = 1/(q_{*}-1) = B_{c}C$.
		
		Finally, we derive the rate of convergence. First, using~\eqref{eq:limit_z_n+1,n+1} and the limit of $z_{1,n+1}$ we have just shown, we have
		\begin{align}\label{eq:z_ij_limit_7}
		|z_{n+1,n+1}-C| \,\le\, 2B_{c}C\ts n^{\delta-1}
		\end{align}
		for all sufficiently large $n\ge 1$. Also, from~\eqref{eq:z_11_lowerbd}, we have
		\begin{align}\label{eq:z_ij_limit_6}
		0\. \le \. P-1 \. \le \. \frac{n^{\delta-1}}{C(B-B_{c})} \quad \text{for all} \ \. n\ge 1.
		\end{align}

		Using the notation $q_{*}=\sqrt{1+1/C}$ as above, we can write
		\begin{align}
		|z_{1,n+1} - B_{c}C|\, =\, \left| z_{1,n+1} - \frac{1}{q_{*}-1} \right|
		\, \le \, \frac{Q(P-1)}{(PQ-1)(Q-1)} \. + \.\left|  \frac{1}{Q-1} - \frac{1}{q_{*}-1}  \right|.
		\end{align}
		By~\eqref{eq:z_ij_limit_6}, the first term in the right hand side is \ts $O(n^{\delta-1}/(B-B_{c}))$, where the constant does not depend on $B$. On the other hand, note that the inequality~\eqref{eq:z_ij_limit_5} still holds for $B>B_{c}$. Hence~\eqref{eq:z_ij_limit_7} implies that the second term is of order $O(n^{\delta-1})$, where the constant is independent in $B$. Lastly, it then follows that
		\begin{align}
		\bigl|n^{\delta-1}z_{11} - C(B-B_{c})\bigr| \, = \, |B_{c}C - z_{1,n+1}| \, = \, O\bigl(n^{\delta-1}\bigr),
		\end{align}
		where the constant in the last $O(n^{\delta-1})$ does not grow in $B$. This proves~(ii).
	\end{proof}

	\vspace{0.2cm}
	\section{Convergence in total variation distance and the proof of Theorem~\ref{thm:main_geo}}

	In this section we prove Theorem~\ref{thm:main_geo}. We first derive the following result from Lemma~\ref{lemma:key}, which states that each entry of the random contingency table $X\in \mathcal{M}_{n,\delta}(B,C)$ has a geometric limiting distribution with mean dictated by the corresponding entry in the typical table.
	
	\begin{theorem}\label{thm:geo_approximation}
		Let $X=(X_{ij})$ be sampled from $\M_{n,\delta}(B,C)$ uniformly at random. Let $Z=(z_{ij})$ be the typical table for $\mathcal{M}_{n,\delta}(B,C)$ and let $Y=(Y_{ij})$ denote the random matrix of independent entries with $Y_{ij}\sim \Geom(z_{ij})$. Fix integers $k\ge 1$ and $1\le i_{r},j_{r}\le n+\lfloor n^{\delta} \rfloor$, $1\le r\le k$. Then for every $\eps>0$, we have:
		\begin{align}
		d_{TV}\left( \prod_{r=1}^{k} X_{i_{r},j_{r}}, \prod_{r=1}^{k} Y_{i_{r},j_{r}}\right)
		\le n^{-\eta(\delta) + \eps}
		\end{align}
		for all sufficiently large $n\ge 1$, where
		\begin{align}\label{eq:def_eta_exponent}
		\eta(\delta) =
		\begin{cases}
		1/2 & \text{if all $(i_{r},j_{r})$'s are contained in the bottom right block,} \\
		\delta-1/2  & \text{if some $(i_{r},j_{r})$ is contained in the top left block,} \\
		\delta/2 & \text{otherwise}.
		\end{cases}
		\end{align}
	\end{theorem}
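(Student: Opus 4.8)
The plan is to invoke Lemma~\ref{lemma:key} with a judicious choice of blocks and of the free parameter~$t$, and then to check that the Azuma term in that lemma overwhelms the factor $N^{\gamma(m+n)}$.

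First I would record the block structure of $\mathcal{M}_{n,\delta}(B,C)$. Assuming $B\neq 1$ (when $B=1$ the margins are uniform, there is a single block of size $\Theta(n^{2})$, and the estimate below only improves), the index set splits into four blocks: the \emph{top left} block $\mathrm{TL}=\{1,\dots,\lfloor n^{\delta}\rfloor\}^{2}$, the two \emph{side} blocks $\mathrm{TR},\mathrm{BL}$ of shapes $\lfloor n^{\delta}\rfloor\times n$ and $n\times\lfloor n^{\delta}\rfloor$, and the \emph{bottom right} block $\mathrm{BR}$ of shape $n\times n$. Since $0\le\delta<1$, their sizes satisfy $|\mathrm{TL}|=\Theta(n^{2\delta})<|\mathrm{TR}|=|\mathrm{BL}|=\Theta(n^{1+\delta})<|\mathrm{BR}|=\Theta(n^{2})$. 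Comparing with~\eqref{eq:def_eta_exponent}, the smallest block that contains at least one of the coordinates $(i_{r},j_{r})$ has size $\Theta\bigl(n^{2\eta(\delta)+1}\bigr)$: indeed $2\eta(\delta)+1$ equals $2$ when all coordinates lie in $\mathrm{BR}$, equals $2\delta$ when some coordinate lies in $\mathrm{TL}$, and equals $\delta+1$ otherwise. I would also note the crude estimates $N=\lfloor n^{\delta}\rfloor\lfloor BCn\rfloor+n\lfloor Cn\rfloor=O(n^{2})$ and $m+n=2(\lfloor n^{\delta}\rfloor+n)=O(n)$, so that $\log\bigl(N^{\gamma(m+n)}\bigr)=O(n\log n)$, where $\gamma$ is the constant of Theorem~\ref{thm:Barvinok_summary} and Lemma~\ref{lemma:key}.

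Now, given $(i_{r},j_{r})$ for $1\le r\le k$, let $\mathcal{B}_{1}\le\dots\le\mathcal{B}_{k}$ (ordered by size) be the blocks containing them, after reordering the coordinates accordingly; then $|\mathcal{B}_{1}|\ge c\,n^{2\eta(\delta)+1}$ for some $c>0$ by the previous paragraph, and with $I\in\mathcal{B}_{1}\times\dots\times\mathcal{B}_{k}$ the corresponding tuple one has $\prod_{\ell}X_{I_{\ell}}=\prod_{r}X_{i_{r},j_{r}}$. Applying Lemma~\ref{lemma:key} with these blocks and with $t=\tfrac12 n^{-\eta(\delta)+\eps}$ gives
\begin{align}
d_{TV}\!\left(\prod_{r=1}^{k}X_{i_{r},j_{r}},\ \prod_{r=1}^{k}Y_{i_{r},j_{r}}\right)
& \ \le\ \tfrac12\,n^{-\eta(\delta)+\eps}\ +\ \exp\!\left(O(n\log n)\ -\ \Bigl\lfloor\tfrac{|\mathcal{B}_{1}|}{k}\Bigr\rfloor\tfrac{t^{2}}{2}\right).
\end{align}
Since $k$ is fixed, $\lfloor|\mathcal{B}_{1}|/k\rfloor\,t^{2}/2=\Omega\bigl(n^{2\eta(\delta)+1}\cdot n^{-2\eta(\delta)+2\eps}\bigr)=\Omega\bigl(n^{1+2\eps}\bigr)$, which dominates the $O(n\log n)$ term; hence the exponential term decays faster than every power of $n$ and is at most $\tfrac12 n^{-\eta(\delta)+\eps}$ for all large $n$, and adding the two terms yields the asserted bound. (When $\eta(\delta)\le 0$ — which can occur only if some coordinate lies in $\mathrm{TL}$ and $\delta\le 1/2$ — the claimed bound eventually exceeds the trivial bound $d_{TV}\le 2$, and the same computation applies unchanged.)

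The only delicate point — the ``main obstacle'' here — is the exponent bookkeeping: one must take $t$ large enough that $\lfloor|\mathcal{B}_{1}|/k\rfloor\,t^{2}$ beats the enormous factor $N^{\gamma(m+n)}=\exp(\Theta(n\log n))$ inherited from the lower bound on $|\mathcal{M}(\r,\c)|$ in Theorem~\ref{thm:Barvinok_summary}(i), yet small enough to retain a nontrivial bound. This pins $t$ just above $n^{-\eta(\delta)}$ by a factor $n^{\Theta(\eps)}$, which is exactly why the extra $n^{\eps}$ cannot be removed by this argument; everything else is a routine substitution.
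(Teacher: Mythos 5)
Your proof is correct and follows the paper's argument essentially verbatim: apply Lemma~\ref{lemma:key} with $t=\tfrac12 n^{-\eta(\delta)+\eps}$ to the relevant blocks, note $|\mathcal{B}_1|=\Theta(n^{2\eta(\delta)+1})$, and observe that $\lfloor|\mathcal{B}_1|/k\rfloor\,t^2/2=\Omega(n^{1+2\eps})$ swamps $\log\bigl(N^{\gamma(m+n)}\bigr)=O(n\log n)$. The only cosmetic difference is that you unify the three cases through the identity $2\eta(\delta)+1=\log_n|\mathcal{B}_1|$ rather than treating them one at a time as the paper does.
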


	\begin{proof}
		Fix $\eps>0$. Note that each $(i_{r},j_{r})$ is contained one of the four blocks of $\mathcal{M}_{n,\delta}(B,C)$. Suppose that all of the indices are contained in the bottom right block \ts $\mathcal{B}_{BR}:=\bigl\{ \lfloor n^{\delta} \rfloor+1, \lfloor n^{\delta} \rfloor+n  \bigr\}^{2}$, which has size $n^{2}$. Then we apply Lemma~\ref{lemma:key} with $t=(1/2)n^{-1/2+\eps}$. Since $N \le Cn^{2} + BCn^{1+\delta}$ and $0\le \delta<1$, this gives:
		\begin{align}
		d_{TV}\left( \prod_{r=1}^{k} \ts X_{i_{r},j_{r}}, \ts \prod_{r=1}^{k} \ts Y_{i_{r},j_{r}}\right) &\, \le \, \frac12n^{-1/2+\eps} \. + \. \exp(c\ts n\log n) \ts \exp\bigl(-c\ts n^{1+2\eps}\bigr) \\
		&\, \le \, \frac12\. n^{-1/2+\eps}\. + \. \exp\bigl(-c'n^{1+2\eps}\bigr)\ts,
		\end{align}
		for some constants $c,c'>0$. The equation in the theorem follows from here.  Note that the top left block has size $(\lfloor n^{\delta} \rfloor)^{2}$ and the top right and bottom left blocks have size $n\lfloor n^{\delta}\rfloor$. By applying
		Lemma~\ref{lemma:key} for $t=(1/2)n^{-(\delta-1/2) + \eps}$ and $t=(1/2)n^{-(\delta/2)+\eps}$ for the other cases,
		respectively, the theorem now follows from a similar argument.
	\end{proof}
	
	\begin{remark}\label{remark:thm_geo_approx_marginal}
		For marginal distribution of single entry of $X\in \mathcal{M}_{n,\delta}(B,C)$, our Theorem~\ref{thm:geo_approximation} implies:
		\begin{align}
		\begin{cases}
		d_{TV}(X_{n+1,n+1},Y_{n+1,n+1}) \le n^{-1/2+\eps} \\
		d_{TV}(X_{1,n+1},Y_{1,n+1}) \le n^{-(\delta/2)+\eps}\\
		d_{TV}(X_{11},Y_{11}) \le n^{1/2 - \delta+ \eps}\ts,
		\end{cases}
		\end{align}
		for all $\eps>0$ and sufficiently large $n\ge 1$.
	\end{remark}

	Next, we give a simple upper bound on the total variation distance between two geometric distributions in terms of the difference of their mean.

	\begin{prop}\label{prop:TV_geom_0}
		For all \ts $\lambda,\lambda'>0$, we have:
		\begin{align}
		d_{TV}(\Geom(\lambda),\Geom(\lambda'))\,\le\, 2|\lambda-\lambda'| \min\left( \frac{1+\lambda}{1+\lambda'},\, \frac{1+\lambda'}{1+\lambda} \right).
		\end{align}
	\end{prop}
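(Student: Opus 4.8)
The plan is to reduce the $\ell^{1}$-distance to a single tail comparison using the monotone likelihood ratio (single-crossover) property of geometric laws, and then to estimate the resulting difference of tails crudely.

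First, since both sides are symmetric in $\lambda,\lambda'$ and vanish at $\lambda=\lambda'$, it suffices to treat the case $\lambda>\lambda'>0$, for which $\min\bigl(\tfrac{1+\lambda}{1+\lambda'},\tfrac{1+\lambda'}{1+\lambda}\bigr)=\tfrac{1+\lambda'}{1+\lambda}$. Write $r=\tfrac{\lambda}{1+\lambda}$ and $r'=\tfrac{\lambda'}{1+\lambda'}$, so that $0<r'<r<1$, the mass function of $\Geom(\mu)$ at $k$ is $(1-s)\ts s^{k}$ with $s=\tfrac{\mu}{1+\mu}$, and the tail is $\P\bigl(\Geom(\mu)\ge m\bigr)=s^{m}$. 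Then the likelihood ratio
\begin{align}
\frac{\P(\Geom(\lambda)=k)}{\P(\Geom(\lambda')=k)} \,=\, \frac{1+\lambda'}{1+\lambda}\,\Bigl(\frac{r}{r'}\Bigr)^{k}
\end{align}
is strictly increasing in $k$ (because $r/r'>1$) and is $<1$ at $k=0$, so there is a unique integer $m\ge 1$ such that $\P(\Geom(\lambda)=k)\ge \P(\Geom(\lambda')=k)$ if and only if $k\ge m$.

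Next I would use that the two mass functions both sum to $1$, so the positive and negative parts of their difference carry the same total mass; hence
\begin{align}
d_{TV}\bigl(\Geom(\lambda),\Geom(\lambda')\bigr)
\,=\, 2\sum_{k\ge m}\Bigl[\P(\Geom(\lambda)=k)-\P(\Geom(\lambda')=k)\Bigr]
\,=\, 2\bigl(r^{m}-(r')^{m}\bigr).
\end{align}
Factoring $r^{m}-(r')^{m}=(r-r')\sum_{j=0}^{m-1}r^{\ts m-1-j}(r')^{j}$ and bounding each $r^{\ts m-1-j}\le 1$ gives $r^{m}-(r')^{m}\le (r-r')\sum_{j\ge 0}(r')^{j}=\tfrac{r-r'}{1-r'}$. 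A short computation yields $\tfrac{r-r'}{1-r'}=\tfrac{\lambda-\lambda'}{1+\lambda}$, so
\begin{align}
d_{TV}\bigl(\Geom(\lambda),\Geom(\lambda')\bigr)\,\le\,\frac{2(\lambda-\lambda')}{1+\lambda}
\,\le\, 2(\lambda-\lambda')\ts\frac{1+\lambda'}{1+\lambda}
\,=\, 2|\lambda-\lambda'|\ts\min\!\left(\frac{1+\lambda}{1+\lambda'},\,\frac{1+\lambda'}{1+\lambda}\right),
\end{align}
which is the asserted inequality (in fact marginally stronger, by the spare factor $1+\lambda'\ge 1$).

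The only step that is not purely mechanical is the single-crossover claim, i.e.\ that $\{k:\P(\Geom(\lambda)=k)\ge \P(\Geom(\lambda')=k)\}$ is an up-set of $\nn$; but this is immediate from the explicit monotone likelihood ratio above, so even the ``hard part'' is routine here. An alternative route bounds $d_{TV}\le\int_{\lambda'}^{\lambda}\sum_{k}|\partial_{s}\P(\Geom(s)=k)|\,ds$ and estimates the inner sum by $2/(1+s)$ using $\E|G-\E G|\le 2\,\E G$ for $G\sim\Geom(s)$; this also works but only produces the weaker constant $\tfrac{2(\lambda-\lambda')}{1+\lambda'}$, so I would prefer the crossover argument.
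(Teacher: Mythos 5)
Your proof is correct, and it takes a genuinely different route from the paper's. The paper bounds the $\ell^{1}$-distance termwise: it writes $(1-p)p^{k}-(1-q)q^{k}=(p^{k}-q^{k})-(p^{k+1}-q^{k+1})$, factors out $(p-q)$, and sums the resulting $O((k+1)q^{k})$ bound to obtain $\sum_{k}|(1-p)p^{k}-(1-q)q^{k}|\le 2|p-q|/(1-q)^{2}$, from which the stated inequality follows after converting from $p,q$ back to $\lambda,\lambda'$ and symmetrizing. You instead invoke the monotone-likelihood-ratio structure of the geometric family: the sign pattern of $\P(\Geom(\lambda)=k)-\P(\Geom(\lambda')=k)$ has a single crossover, so (with the paper's $\ell^{1}$ normalization of $d_{TV}$) the distance collapses to twice the tail gap $r^{m}-(r')^{m}$ at the unique crossing index $m$, which you then bound by the telescoping factorization. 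Your approach buys a sharper constant --- you land on $2(\lambda-\lambda')/(1+\lambda)$, which beats the stated $2(\lambda-\lambda')(1+\lambda')/(1+\lambda)$ by a spare factor of $1+\lambda'\ge 1$ --- and it avoids having to sum a series of the form $\sum(k+1)q^{k}$. The trade-off is that it uses a structural fact (unimodality of the sign of the difference) where the paper's computation is entirely mechanical; since you supply the one-line justification of the crossover via the explicit likelihood ratio, there is no gap. One cosmetic point: the paper defines $d_{TV}$ without the customary factor $1/2$ (so $d_{TV}=\sum_{x}|\mu_{1}(x)-\mu_{2}(x)|$), and your computation correctly uses that convention, whereas the paper's own proof momentarily writes $2\,d_{TV}=\sum_{k}|\cdot|$; your version is internally consistent with the stated definition.
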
	
	
	\begin{proof}
		Suppose $0\le p<q \le 1$. Then for every $k\ge 0$,
		\begin{align}
		&(1-p)p^{k} - (1-q)q^{k} \, = \, (p^{k} - q^{k}) - (p^{k+1}-q^{k+1}) \\
		&\qquad = \, (p-q)\left[ (p^{k-1} + p^{k-2}q + \ldots + pq^{k-1} + q^{k}) -
		(p^{k} + p^{k-1}q + \ldots + pq^{k} + q^{k+1}) \right],
		\end{align}
		so this gives
		\begin{align}
		\sum_{k=0}^{\infty}\. \bigl|(1-p)p^{k} - (1-q)q^{k}\bigr| \, \le \, 2|p-q| \. \sum_{k=0}^{\infty}\. (k+1)q^{k} \,\le\, \frac{2|p-q|}{(1-q)^{2}}.
		\end{align}
		Fix a constant $\lambda>0$ and let \ts $X\sim \Geom(\lambda)$. Then:
		\begin{align}
		\P(X=k) \, = \,\left( \frac{1}{1+\lambda} \right) \left( \frac{\lambda}{1+\lambda}\right)^{k}, \qquad \text{$k\in \nn$}\ts.
		\end{align}
		For $X'\sim \Geom(\lambda')$ with $\lambda'>0$, we have:
		\begin{align}
		2\ts d_{TV}\bigl(\Geom(\lambda),\Geom(\lambda')\bigr) \, = \, \sum_{k=0}^{\infty} \bigl|\P(X=k)-\P(X'=k)\bigr| \, \le \, 2\ts\left|\frac{1}{1+\lambda} - \frac{1}{1+\lambda'}\right| (1+\lambda)^{2} \, = \, 2|\lambda-\lambda'| \frac{1+\lambda}{1+\lambda'}.
		\end{align}
		Then the result follows by changing the roles of $\lambda$ and $\lambda'$. 
	\end{proof}


	\begin{proof}[\textbf{Proof of Theorem~\ref{thm:main_geo}}]
		
		In order to show (i), first note that by Proposition~\ref{prop:TV_geom_0} and~\eqref{eq:limit_z_n+1,n+1}, we get:
		\begin{align}
		d_{TV}\bigl(\Geom(z_{n+1,n+1}),\Geom(C)\bigr) \,\le\, |z_{n+1,n+1}-C|\frac{1+C}{1+z_{n+1}} \,\le\, n^{\delta-1}BC(1+C).
		\end{align}	
		Then the bound on the total deviation distance between $X_{n+1,n+1}$ and $\Geom(C)$ follows from Theorem~\ref{thm:geo_approximation} (see also Remark~\ref{remark:thm_geo_approx_marginal})  and the triangle inequality. This shows (i).
		
		Next, suppose $B<B_{c}$. Then, by Theorem~\ref{thm:geo_approximation}~(ii) (see also Remark~\ref{remark:thm_geo_approx_marginal}) and Proposition~\ref{prop:TV_geom_0}, for all $\eps>0$ and sufficiently large $n\ge 1$, we have:
		\begin{align}
		d_{TV}\bigl(X_{1,n+1}, \Geom(BC)\bigr) &\, \le \, d_{TV}\bigl(X_{1,n+1}, \Geom(z_{1,n+1})\bigr) \. + \. d_{TV}\bigl(z_{1,n+1}, \Geom(BC)\bigr) \\
		&\, \le \, d_{TV}\bigl(X_{1,n+1}, \Geom(z_{1,n+1})\bigr) \. + \. d_{TV}\bigl(z_{1,n+1}, \Geom(BC)\bigr) \\
		&\, \le \, n^{-(\delta/2)+\eps} \. + \. 2|z_{1,n+1} - BC|(1+z_{1,n+1})(1+BC).
		\end{align}
		Therefore, the second term is of order $O(n^{\delta-1})$ by Lemma~\ref{lemma:typical_corner} and \eqref{eq:limit_z_n+1,n+1}. A similar argument can be used to prove the rest of the theorem, except for the case of $X_{11}$ at the supercritical regime.
		
		For the last case, suppose $B>B_{c}$ consider $X_{11}$. Denote $\sigma:=C(B-B_{c})$ and  $\xi_{n}:=n^{1-\delta}\sigma-z_{11}$ for each $n\ge 1$. Note that $\xi_{n}=O(1)$ by Lemma \ref{lemma:typical_corner} (ii). Then for all $\eps>0$ and sufficiently large $n\ge 1$, Theorem~\ref{thm:geo_approximation} implies:
		\begin{align}
		d_{TV}\bigl(X_{11}, \Geom(\sigma n^{1-\delta}+\xi_{n})\bigr) \, & = \, d_{TV}\bigl(X_{11},\Geom(z_{11})\bigr) \le \, n^{1/2-\delta + \eps}.
		\end{align}
		This finishes the proof.
	\end{proof}

	\vspace{0.3cm}
	\section{Convergence of first moments and the proof of Theorem~\ref{thm:main_expectation}}
	
	In this section, we provide convergence of first moments of the entries in the uniform contingency table $X\in \mathcal{M}_{n,\delta}(B,C)$, and prove Theorem~\ref{thm:main_expectation}. Depending on the block that the entries belong to, the convergence may only hold under truncation for general $0< \delta < 1$. For $1/2<\delta<1$, we can get rid of all truncation by using Theorem~\ref{thm:main_geo}.

	We first prove some useful facts that allow us to transfer convergence in total variation distance into $L_{1}$ convergence under a certain uniform boundedness condition. We write $a\land b = \min(a,b)$ for all $a,b\in \mathbb{R}$.

	\begin{prop}\label{prop:geo_truncation}
		Let $(Y_{n})_{n\ge 0}$ be a sequence of random variables taking values from $\nn$ with finite mean. Suppose that there exists some constant $c>0$ such that $\P(Y_{n}\ge k)\le \exp(-ck)$ for all $n,k\ge 0$. Then for every sequence $M_{n}\rightarrow \infty$, we have:
		\begin{align}
		\bigl|\E[Y_{n}\land M_{n}] - \E[Y_{n}]\bigr|\, = \, O\bigl(\exp(-c M_{n})\bigr).
		\end{align}
	\end{prop}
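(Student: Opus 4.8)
The plan is to estimate the tail contribution $\E[Y_n] - \E[Y_n \land M_n]$ directly from the uniform subexponential tail bound. First I would write
\begin{align}
\bigl|\E[Y_n] - \E[Y_n \land M_n]\bigr| \,=\, \E\bigl[(Y_n - M_n)^+\bigr] \,=\, \sum_{k \ge M_n} \P(Y_n > k),
\end{align}
using the layer-cake (tail-sum) formula for the expectation of the nonnegative integer-valued random variable $(Y_n - M_n)^+$, together with the elementary identity $Y_n - Y_n \land M_n = (Y_n - M_n)^+$. (Here I am tacitly using that $M_n$ may be taken to be a nonnegative integer, or else I replace $M_n$ by $\lfloor M_n\rfloor$ at the cost of an additive constant inside the exponent, which is absorbed into the $O$.) The hypothesis $\P(Y_n \ge k) \le \exp(-ck)$ applies uniformly in $n$, so the finiteness of the mean is automatic and there is no interchange-of-limits issue.

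Next I would bound the tail sum by a geometric series:
\begin{align}
\sum_{k \ge M_n} \P(Y_n > k) \,\le\, \sum_{k \ge M_n} e^{-ck} \,=\, \frac{e^{-c M_n}}{1 - e^{-c}} \,=\, O\bigl(e^{-c M_n}\bigr),
\end{align}
where the implied constant depends only on $c$ and in particular is independent of $n$. Since $M_n \to \infty$, this is the claimed estimate, and the proof is complete.

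There is essentially no main obstacle here: the statement is a routine tail estimate, and the only minor point to be careful about is the reduction to integer truncation levels and the fact that the constant $c$ in the hypothesis is uniform in $n$, which is exactly what makes the final bound uniform. One could also phrase the argument via $\E[(Y_n - M_n)^+] = \int_0^\infty \P(Y_n - M_n > t)\,dt \le \int_0^\infty e^{-c(M_n + t)}\,dt = c^{-1} e^{-cM_n}$ if one prefers the continuous version and wants to avoid the floor function entirely; either route gives $O(e^{-cM_n})$.
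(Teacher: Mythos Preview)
Your proof is correct and follows essentially the same approach as the paper: both express $\E[Y_n] - \E[Y_n \land M_n]$ as a tail sum (the paper via an explicit double-sum interchange, you via the layer-cake formula) and then bound it by the geometric series $\sum_{k \ge M_n} e^{-ck} = O(e^{-cM_n})$. Your remarks on handling non-integer $M_n$ and the continuous alternative are fine minor additions but not needed for the argument.
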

	
	\begin{proof}
		Fix $M>0$. First, observe that
		\begin{align}
		\sum_{k> M} \. \P(Y_{n}\ge k) \, & = \, \sum_{k> M} \sum_{j\ge k} \. \P(Y_{n}=j) \, = \, \sum_{j> M} \sum_{M<k\le j} \.\P(Y_{n}=j) \\
		& = \. \sum_{j> M} \. (j-M) \ts \P(X_{n}=j) \, = \, \E\bigl[Y_{n} - Y_{n}\land M\bigr]\ts.
		\end{align}
		By the exponential tail bound on $Y_{n}$'s this gives
		\begin{align}
		\E[Y_{n} - Y_{n}\land M] \, \le \, \sum_{k>M}  \. e^{-ck} \, =  \, \frac{e^{-c(M+1)}}{1-e^{-c}}.
		\end{align}
		Letting \ts $M:=M_{n}\rightarrow \infty$ as $n\rightarrow \infty$ implies the result.
	\end{proof}

	\begin{prop}\label{prop:TV_expectation}
		Let $(X_{n})_{n\ge 0}$ and $(Y_{n})_{n\ge 0}$ be sequences of random variables taking values from $\nn$ with the following properties:
		\begin{description}
			\item[(i)] There exists some constant $c>0$ such that \ts $\P(Y_{n}\ge k)\le \exp(-ck)$ \ts for all $n,k\ge 0$.
			\item[(ii)] \ts $d_{TV}(X_{n},Y_{n}) = O(n^{-\beta})$ \ts for some $\beta>0$.
		\end{description}
		Then for every sequence $M_{n}\rightarrow \infty$, we have:
		\begin{align}
		\Bigl|\E[X_{n}\land M_{n}] \. - \.\E[Y_{n}] \Bigr|\, = \, O\bigl( M_{n} \ts n^{-\beta}\bigr).
		\end{align}
	\end{prop}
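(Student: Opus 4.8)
The plan is to compare $\E[X_n\land M_n]$ with $\E[Y_n]$ by routing through the \emph{truncated} variable $Y_n\land M_n$ and using the triangle inequality
\[
\bigl|\E[X_n\land M_n] - \E[Y_n]\bigr| \;\le\; \bigl|\E[X_n\land M_n] - \E[Y_n\land M_n]\bigr| \;+\; \bigl|\E[Y_n\land M_n] - \E[Y_n]\bigr|.
\]
The second term is handled immediately: hypothesis (i) is exactly the exponential tail bound required by Proposition~\ref{prop:geo_truncation}, so this term is $O\bigl(\exp(-cM_n)\bigr)$.

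For the first term, the point is that truncation is a fixed measurable map, hence cannot increase total variation distance, and moreover both $X_n\land M_n$ and $Y_n\land M_n$ are (essentially) supported on $\{0,1,\dots,\lfloor M_n\rfloor\}$, so a crude ``$M_n$ times $d_{TV}$'' bound suffices. Concretely, using the layer-cake representation, for any $\nn$-valued $W$ and real $M>0$ one has $\E[W\land M]=\int_0^M\P(W\ge t)\,dt=\int_0^M\P\bigl(W\ge\lceil t\rceil\bigr)\,dt$, whence
\[
\bigl|\E[X_n\land M_n] - \E[Y_n\land M_n]\bigr| \;\le\; \int_0^{M_n}\bigl|\P(X_n\ge\lceil t\rceil) - \P(Y_n\ge\lceil t\rceil)\bigr|\,dt \;\le\; M_n\,d_{TV}(X_n,Y_n),
\]
where in the last step we used that $|\P(X_n\ge k)-\P(Y_n\ge k)|\le d_{TV}(X_n,Y_n)$ for every $k$, with the paper's normalization of $d_{TV}$. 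By hypothesis (ii) this is $O\bigl(M_n\,n^{-\beta}\bigr)$.

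Combining the two estimates yields $\bigl|\E[X_n\land M_n]-\E[Y_n]\bigr| = O\bigl(M_n\,n^{-\beta}+\exp(-cM_n)\bigr)$; since $M_n\to\infty$ the exponential term is $o(1)$, and in every application below (where $M_n=n^\alpha$) it is super-polynomially small and hence absorbed into $O(M_n\,n^{-\beta})$. There is no real obstacle here; the only step needing care is the decision to truncate $Y_n$ as well rather than comparing $\E[X_n\land M_n]$ with $\E[Y_n]$ directly — this is what keeps both variables in a bounded range so the elementary bound applies, and hypothesis (i) is precisely what repairs the small error introduced by that extra truncation.
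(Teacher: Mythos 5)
Your proof is correct and follows the paper's approach essentially verbatim: triangle inequality through $\E[Y_n\land M_n]$, Proposition~\ref{prop:geo_truncation} for the de-truncation error on $Y_n$, and an $M_n\cdot d_{TV}(X_n,Y_n)$ bound for the truncated difference; the only cosmetic deviation is that you use the layer-cake identity $\E[W\land M]=\int_0^M \P(W\ge t)\,dt$, whereas the paper sums $k\,\delta_{k,n}$ over the pmf and bounds by $M_n\sum_k|\delta_{k,n}|$. You were also right to flag that the leftover $\exp(-cM_n)$ is not literally $O(M_n n^{-\beta})$ for arbitrary slowly growing $M_n$ — the paper's statement silently assumes this absorption, which is valid for every application (where $M_n=n^\alpha$), and your remark makes explicit a small imprecision the paper leaves implicit.
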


	\begin{proof}
		Fix $M>0$. Write
		\begin{align}
		\E[X_{n}\land M] \. - \. \E[Y_{n}\land M] \,=\, \sum_{k=0}^{M} \.k\ts\delta_{k,n},
		\end{align}
		where \ts $\delta_{k,n} = \P(X_{n}=k) - \P(Y_{n}=k)$. Note that
		\begin{align}
		\sum_{k=0}^{M}|\delta_{k,n}| \,\le\, \sum_{k=0}^{\infty}|\delta_{k,n}| \,=\, 2\ts d_{TV}(X_{n},Y_{n}).
		\end{align}
		Hence we have
		\begin{align}
		\bigl|\E[X_{n}\land M] - \E[Y_{n}\land M] \bigr| \, & \le \left| \sum_{k=0}^{M} k\delta_{k,n} \right| \le M  \sum_{k=0}^{M} |\delta_{k,n}| \le 2M\, d_{TV}(X_{n},Y_{n}).
		\end{align}
		By the assumption (ii), this implies
		\begin{align}
		\bigl|\E[X_{n}\land M_{n}] - \E[Y_{n}\land M_{n}] \bigr| = O(M_{n}n^{-\beta}).
		\end{align}
	 	Now the result follows from the triangle inequality and Proposition~\ref{prop:geo_truncation}.
	\end{proof}

	In the rest of this section, we let $X=(X_{ij})$ denote the random contingency table sampled from
	$\M_{n,\delta}(B,C)$ uniformly at random. Let $Z=(z_{ij})$ be the typical table for $\M_{n,\delta}(B,C)$.
	
	As an immediate application of Lemma~\ref{lemma:typical_corner}, Theorem~\ref{thm:main_geo}, and
	Propositions~\ref{prop:geo_truncation} and~\ref{prop:TV_expectation}, we show that the first moments of the entries in $X$ converge to
	the corresponding entries in $Z$ under truncation.
	
	\begin{prop}\label{prop:first_moment_truncation_bd}
		Let $B,C>0 $ and let \ts $B_{c}=1+\sqrt{1+1/C}$ as above. Fix $\alpha,\eps>0$.
		\begin{description}
			\item[(i)] For all \. $0\le \delta < 1$, we have:
			$$
			\bigl| \E\left[ X_{n+1,n+1}\land n^{\alpha}\right] - z_{n+1,n+1} \bigr| \. =\.  O\bigl(n^{\alpha - 1/2 +\eps}\bigr)\ts.
			$$
			\item[(ii)] For all \. $0< \delta < 1$, we have:
			$$\bigl| \E\left[ X_{1,n+1}\land n^{\alpha}\right] - z_{1,n+1} \bigr| \. = \.O\bigl(n^{\alpha -(\delta/2)+\eps}\bigr)\ts.
			$$
			\item[(iii)] For all \. $1/2<\delta<1$, we have:
			\begin{align}
			\begin{cases}
			\bigl| \E\left[ X_{11}\land n^{\alpha} \right] - z_{11} \bigr| \, = \, O(n^{\alpha +1/2-\delta+\eps}) & \text{if $B<B_{c}$}\., \\
			\left| \E\left[ (n^{\delta-1}X_{11})\land n^{\alpha} \right] - n^{\delta-1}z_{11} \right| \, = \. O(n^{\alpha +1/2-\delta+\eps}) & \text{if $B>B_{c}$}\..
			\end{cases}
			\end{align}
		\end{description}	
	\end{prop}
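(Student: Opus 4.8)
\emph{Proof proposal.} The plan is to turn the total variation estimates of Theorem~\ref{thm:geo_approximation}, in the single-entry form recorded in Remark~\ref{remark:thm_geo_approx_marginal}, into statements about truncated first moments by invoking Propositions~\ref{prop:geo_truncation} and~\ref{prop:TV_expectation} with the comparison variable $Y_{ij}\sim\Geom(z_{ij})$. The two ingredients to supply are: (a) a uniform exponential tail bound $\P(Y_{ij}\ge k)\le e^{-ck}$ with $c>0$ independent of $n$, and (b) a rate $d_{TV}(X_{ij},Y_{ij})=O(n^{-\beta})$; granting these, Proposition~\ref{prop:TV_expectation} applied with $M_n=n^{\alpha}$ yields $\bigl|\E[X_{ij}\land n^{\alpha}]-z_{ij}\bigr|=O(n^{\alpha-\beta})$, and we are done since $\E[Y_{ij}]=z_{ij}$ exactly.

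For parts (i) and (ii), and for part (iii) when $B<B_{c}$, point (a) is immediate: $z_{n+1,n+1}\le C$ and $z_{1,n+1}\le BC$ for all $n$ by~\eqref{eq:limit_z_n+1,n+1}, while $z_{11}$ is uniformly bounded for $B<B_{c}$ by Proposition~\ref{prop:z_11_subcritical} (equivalently Lemma~\ref{lemma:typical_corner}(i)); in each case $Y_{ij}$ is geometric with bounded mean, hence has the required uniform tail. For point (b), Remark~\ref{remark:thm_geo_approx_marginal} supplies $d_{TV}(X_{n+1,n+1},Y_{n+1,n+1})\le n^{-1/2+\eps}$, $d_{TV}(X_{1,n+1},Y_{1,n+1})\le n^{-\delta/2+\eps}$, and $d_{TV}(X_{11},Y_{11})\le n^{1/2-\delta+\eps}$ (the last for $1/2<\delta<1$, so that the exponent $\beta=\delta-1/2-\eps$ is positive). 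Feeding these into Proposition~\ref{prop:TV_expectation} with $M_n=n^{\alpha}$ produces the bounds $O(n^{\alpha-1/2+\eps})$, $O(n^{\alpha-\delta/2+\eps})$, and $O(n^{\alpha+1/2-\delta+\eps})$ respectively, which are precisely the assertions of (i), (ii), and the first line of (iii).

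The delicate case, which I expect to be the main obstacle, is part (iii) with $B>B_{c}$: here Lemma~\ref{lemma:typical_corner}(ii) gives $z_{11}=C(B-B_{c})\,n^{1-\delta}+O(1)\to\infty$, so $Y_{11}\sim\Geom(z_{11})$ has no uniform exponential tail and Proposition~\ref{prop:TV_expectation} is not directly applicable; moreover the natural normalization is $n^{\delta-1}X_{11}$ rather than $X_{11}$, since $\E[X_{11}\land n^{\alpha}]$ is capped at $n^{\alpha}\ll n^{1-\delta}$. The plan here is to rerun the two steps behind Propositions~\ref{prop:geo_truncation}--\ref{prop:TV_expectation} by hand for the rescaled pair $\widetilde X:=n^{\delta-1}X_{11}$, $\widetilde W:=n^{\delta-1}Y_{11}$, which live on the lattice $n^{\delta-1}\nn$ and satisfy $\widetilde X\land n^{\alpha}=n^{\delta-1}(X_{11}\land n^{\alpha+1-\delta})$, likewise for $\widetilde W$. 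First, using $0<z_{11}\le c_{1}n^{1-\delta}$ for large $n$ (from Lemma~\ref{lemma:typical_corner}(ii)) together with $\log(1+x)\ge x/(1+x)$, one checks $\P(\widetilde W\ge t)\le\bigl(z_{11}/(1+z_{11})\bigr)^{t n^{1-\delta}}\le e^{-t/(2c_{1})}$ uniformly in $n$ and $t\ge 0$, so that $\E[\widetilde W]-\E[\widetilde W\land n^{\alpha}]=\E[(\widetilde W-n^{\alpha})^{+}]\le\int_{n^{\alpha}}^{\infty}e^{-t/(2c_{1})}\,dt$ is superpolynomially small. Second, the one-line bound in the proof of Proposition~\ref{prop:TV_expectation} applies verbatim with $n^{\alpha}$ as the truncation level for the rescaled variables, giving $\bigl|\E[\widetilde X\land n^{\alpha}]-\E[\widetilde W\land n^{\alpha}]\bigr|\le 2n^{\alpha}\,d_{TV}(\widetilde X,\widetilde W)=2n^{\alpha}\,d_{TV}(X_{11},Y_{11})=O(n^{\alpha+1/2-\delta+\eps})$. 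Adding the two estimates and recalling $\E[\widetilde W]=n^{\delta-1}z_{11}$ yields $\bigl|\E[(n^{\delta-1}X_{11})\land n^{\alpha}]-n^{\delta-1}z_{11}\bigr|=O(n^{\alpha+1/2-\delta+\eps})$. Apart from Theorem~\ref{thm:geo_approximation} and the typical-table asymptotics of Lemma~\ref{lemma:typical_corner}, everything is bookkeeping; the only genuine subtlety is to carry out the truncation on the rescaled variable in this last case, so that no spurious factor of $n^{1-\delta}$ appears in the error.
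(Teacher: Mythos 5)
Your proposal is correct and takes essentially the same route as the paper: cases (i), (ii), and (iii) with $B<B_{c}$ are exactly the paper's argument (uniformly bounded $z_{ij}$, hence uniform geometric tail, then Remark~\ref{remark:thm_geo_approx_marginal} fed into Proposition~\ref{prop:TV_expectation} with $M_n=n^{\alpha}$). For (iii) with $B>B_{c}$ the paper likewise rescales to $n^{\delta-1}X_{11}$ and $n^{\delta-1}Y_{11}$ and invokes the same two propositions, merely eliding the bookkeeping; your explicit verification that the rescaled geometric variable has a uniform exponential tail and that the proofs of Propositions~\ref{prop:geo_truncation}--\ref{prop:TV_expectation} go through on the lattice $n^{\delta-1}\nn$ (using $d_{TV}(n^{\delta-1}X_{11},n^{\delta-1}Y_{11})=d_{TV}(X_{11},Y_{11})$) fills in exactly the detail the paper leaves to the reader.
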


	\begin{proof}
		We first show (i). Let $Y=(Y_{ij})$ be the matrix of independent entries where $Y_{ij}$
		has geometric distribution with mean $z_{ij}$. Recall that $z_{n+1,n+1}\le C$
		from~\eqref{eq:limit_z_n+1,n+1}. Hence $Y_{n+1,n+1}$ has exponential tail bound
		independent of~$n$. Since $\E[Y_{n+1,n+1}]=z_{n+1,n+1}$, (i) follows from Remark \ref{remark:thm_geo_approx_marginal} and Proposition~\ref{prop:TV_expectation}. A similar argument gives~(ii) and~(iii) for all \ts $B<B_{c}$. Lastly, suppose $B>B_{c}$ and $1/2<\delta<1$. Note that by Lemma~\ref{lemma:typical_corner}, we have:
		\begin{align}
		\lim_{n\rightarrow \infty} n^{\delta-1}z_{11} \,= \, C(B-B_{c}).
		\end{align}
		Thus $n^{\delta-1}Y_{11}$ has exponential tail bound independent of~$n$. Hence (iii) for $B>B_{c}$ also follows from Remark \ref{remark:thm_geo_approx_marginal} and Proposition~\ref{prop:TV_expectation}.
	\end{proof}

	In the following proposition, we show that in many cases, we can get rid of the truncation in Proposition~\ref{prop:first_moment_truncation_bd}. For this purpose, we work with the following `error table' $\ov{X}=(\ov{X}_{ij}) := X - Z$. If we denote $r=\lfloor n^{\delta} \rfloor$, then the margin condition for the tables in $\mathcal{M}_{n,\delta}(B,C)$ give:
	\begin{align}\label{eq:margin_condition_Xbar}
	\begin{cases}
	(\ov{X}_{11} + \ov{X}_{12} + \ldots + \ov{X}_{1r} ) + (\ov{X}_{1,r+1}+\ov{X}_{1,r+2}+\ldots+ \ov{X}_{1,r+n}) &= 0, \\
	(\ov{X}_{n+1,1} + \ov{X}_{n+1, 2} + \ldots + \ov{X}_{n+1, r} ) + (\ov{X}_{n+1,r+1}+\ov{X}_{n+1,r+2}+\ldots+ \ov{X}_{n+1,r+n}) &= 0 .
	\end{cases}
	\end{align}

	\begin{prop}\label{prop:first_moment_error_bound}
		Fix $B,C> 0$ and $0\le \delta<1$. Let $B_{c}=1+\sqrt{1+1/C}$. Then for all $\eps>0$, we have:
		\begin{align}
		\begin{cases}
		\left| \E\left[\ov{X}_{n+1,n+1}  \right] \right| = O(n^{\delta-1}) & \text{$\ \text{for all \ } 0\le \delta< 1$\ts,}\\
		n^{\delta-1} \left| \E[X_{11}] \right| +  \left| \E\left[\ov{X}_{1,n+1}  \right] \right| = O(n^{\delta-1}+n^{-(\delta/2)+\eps}) & \text{\ \text{for all} \  $B<B_{c}$ \, and \, $0<\delta< 1$\ts,} \\
		n^{\delta-1}\left| \E\left[ \ov{X}_{11}  \right]\right| + \left| \E\left[\ov{X}_{1,n+1}  \right] \right| = O(n^{(1/2)-\delta+\eps}) & \text{\ \text{for all} \  $B>B_{c}$ \, and \, $1/2<\delta< 1$}\ts.
		\end{cases}
		\end{align}
	\end{prop}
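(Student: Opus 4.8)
The plan is to deduce all three lines from two ingredients: the exact mass-conservation identities for the error table $\ov{X}=X-Z$ recorded in~\eqref{eq:margin_condition_Xbar}, and the truncated first-moment estimates of Proposition~\ref{prop:first_moment_truncation_bd}. Write $r=\lfloor n^\delta\rfloor$. Since $X$ and $Z$ carry the same margins, every row and column sum of $\ov{X}$ equals $0$; taking expectations in~\eqref{eq:margin_condition_Xbar}, and using that the entries of $X$ within a block are identically distributed and that the model is transpose-invariant (so $\E[X_{n+1,1}]=\E[X_{1,n+1}]$), one obtains
\begin{align}\label{eq:plan_margin_ids}
r\,\E[\ov X_{11}] \. + \. n\,\E[\ov X_{1,n+1}] \. = \. 0, \qquad
r\,\E[\ov X_{1,n+1}] \. + \. n\,\E[\ov X_{n+1,n+1}] \. = \. 0 .
\end{align}
Thus the three error-means are tied together by the ratio $r/n=\Theta(n^{\delta-1})$, so it suffices to bound $\E[\ov X_{1,n+1}]$ in each regime; moreover, applying the first identity to $X$ itself and using $X_{11}\ge 0$ gives the cheap one-sided bound $0\le\E[X_{1,n+1}]\le\lfloor BCn\rfloor/n\le BC$. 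For the bottom right entry this already suffices: the second identity in~\eqref{eq:plan_margin_ids} gives $\E[X_{n+1,n+1}]=\lfloor Cn\rfloor/n-(r/n)\E[X_{1,n+1}]=C+O(n^{\delta-1})$, which together with $|z_{n+1,n+1}-C|\le BCn^{\delta-1}$ from~\eqref{eq:limit_z_n+1,n+1} is the first line.

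In the subcritical regime I would sandwich $\E[X_{1,n+1}]$. Since $z_{11}$ is bounded by Proposition~\ref{prop:z_11_subcritical}, the margin identity for $Z$ gives $z_{1,n+1}=BC-(r/n)z_{11}+O(1/n)=BC+O(n^{\delta-1})$, so the cheap bound reads $\E[X_{1,n+1}]\le z_{1,n+1}+O(n^{\delta-1})$. For the matching lower bound, truncation only decreases the mean, so $\E[X_{1,n+1}]\ge\E[X_{1,n+1}\land n^\alpha]=z_{1,n+1}+O(n^{\alpha-\delta/2+\eps})$ by Proposition~\ref{prop:first_moment_truncation_bd}(ii); choosing $\alpha$ small yields $|\E[\ov X_{1,n+1}]|=O(n^{\delta-1}+n^{-\delta/2+\eps})$. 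The first identity in~\eqref{eq:plan_margin_ids} then transfers this bound to $n^{\delta-1}|\E[\ov X_{11}]|$, and adding $n^{\delta-1}|z_{11}|=O(n^{\delta-1})$ gives the claimed bound on $n^{\delta-1}|\E[X_{11}]|$.

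The supercritical regime is the crux, because the cheap bound $\E[X_{1,n+1}]\le BC$ is now useless: by Lemma~\ref{lemma:typical_corner}(ii) one has $z_{1,n+1}\to B_cC$, and $BC-B_cC=C(B-B_c)$ is a positive constant. The fix is to reverse the roles of the two row-entries in~\eqref{eq:plan_margin_ids}. A \emph{lower} bound on the large entry is free, since truncation only helps: Proposition~\ref{prop:first_moment_truncation_bd}(iii), rescaled, gives $\E[X_{11}]\ge\E[X_{11}\land n^{1-\delta+\alpha}]=z_{11}-O(n^{\alpha+3/2-2\delta+\eps})$. Substituting this into $\E[X_{1,n+1}]=(\lfloor BCn\rfloor-r\,\E[X_{11}])/n$ and using $\lfloor BCn\rfloor-r\,z_{11}=n\,z_{1,n+1}$ upgrades the upper bound to $\E[X_{1,n+1}]\le z_{1,n+1}+(r/n)\,O(n^{\alpha+3/2-2\delta+\eps})=z_{1,n+1}+O(n^{1/2-\delta+\eps})$ once $\alpha$ is taken small, while Proposition~\ref{prop:first_moment_truncation_bd}(ii) again supplies the matching lower bound $\E[X_{1,n+1}]\ge z_{1,n+1}-O(n^{\alpha-\delta/2+\eps})\ge z_{1,n+1}-O(n^{1/2-\delta+\eps})$. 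Hence $|\E[\ov X_{1,n+1}]|=O(n^{1/2-\delta+\eps})$, and since $n^{\delta-1}|\E[\ov X_{11}]|=(n^\delta/r)\,|\E[\ov X_{1,n+1}]|=O(|\E[\ov X_{1,n+1}]|)$ by~\eqref{eq:plan_margin_ids}, the third line follows. The one genuine obstacle is precisely this supercritical upper bound on $\E[X_{1,n+1}]$ — in effect, removing the truncation left in place by Proposition~\ref{prop:first_moment_truncation_bd} — and it is handled not through a (missing) tail estimate for $X_{1,n+1}$, but through the exact conservation of mass along a row, which lets a harmless lower bound on the large entry $X_{11}$ pay for the required upper bound on the small entry $X_{1,n+1}$.
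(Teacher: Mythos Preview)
Your proof is correct and follows essentially the same approach as the paper: both use the expected row-sum identities~\eqref{eq:margin_condition_Xbar} together with Proposition~\ref{prop:first_moment_truncation_bd}, exploiting that truncation only lowers the mean (so the proposition supplies lower bounds) and that mass conservation along a row then forces the matching upper bound. The only difference is cosmetic --- the paper writes the row sum as a single decomposition with nonnegative tail terms $\E[(X_{1,n+1}-n^\alpha)^+]$ and $\E[((r/n)X_{11}-n^\alpha)^+]$ and bounds them simultaneously, whereas you phrase the same argument as an explicit sandwich on $\E[X_{1,n+1}]$.
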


	\begin{proof}
		Taking expectation for the equations in~\eqref{eq:margin_condition_Xbar} and using symmetry of entries in each block, we obtain:
		\begin{align}\label{eq:prop_1_moement_conv_1}
		\begin{cases}
		(r/n)\ts \E\bigl[\ov{X}_{11}\bigr] + \E\bigl[\ov{X}_{1,n+1}\bigr] \,  = \. 0\ts, \\
		(r/n)\ts \E\bigl[\ov{X}_{n+1,1}\bigr] + \E\bigl[\ov{X}_{n+1,n+1}\bigr] \, = \. 0\ts.
		\end{cases}
		\end{align}
		Note that \ts $\E[X_{n+1,1}]\le BC$ \ts and \ts $z_{n+1,1}\le BC$, so \ts $|\E[\ov{X}_{n+1,1}]|\le 2BC$.
		Hence the second equation in~\eqref{eq:prop_1_moement_conv_1} gives the first equation in the proposition.
		
		In other to show the second and the third equations, suppose $0<\delta<1$. Fix $\alpha,\eps>0$
		and denote $x^{+} = \max(x,0)$. Note that
		\begin{align}\label{eq:prop_1_moement_conv_2}
		\E [ \ov{X}_{1,n+1}] \, = \, \E\left[ X_{1,n+1}\land n^{\alpha} - z_{1,n+1}\right]
		\. + \. \E\left[ \left( X_{1,n+1} - n^{\alpha} \right)^{+} \right].
		\end{align}
		Also, by Proposition~\ref{prop:first_moment_truncation_bd}, we have:
		\begin{align}\label{eq:prop_1_moement_conv_4}
		\bigl|\E\left[ X_{1,n+1}\land n^{\alpha} - z_{1,n+1}\right]\bigr|  \, = \, O\bigl(n^{\alpha- (\delta/2) + \eps}\bigr).
		\end{align}
		
		Suppose $B<B_{c}$. From the first row sum condition for $X$, we have:
		\begin{align}\label{eq:prop_1_moement_conv_3}
		(r/n)\ts \E[X_{11}] +  \E\left[ X_{1,n+1}\land n^{\alpha} - z_{1,n+1}\right] \. + \.
		\E\left[ \left( X_{1,n+1} - n^{\alpha} \right)^{+} \right] \, = \, \frac{\lfloor BC n \rfloor}{n} -z_{1,n+1}\..
		\end{align}
		The right hand side is of order $O({n^{\delta-1}})$ by Lemma~\ref{lemma:typical_corner}.
		Since the first and the last terms in the left hand side are nonnegative, this shows:
		\begin{align}
		n^{\delta-1} \E\bigl[X_{11}\bigr] \. = \. O\bigl( n^{\delta-1}+ n^{\alpha-(\delta/2)+\eps}\bigr),
        \quad \  \E\left[ \left( X_{1,n+1} - n^{\alpha} \right)^{+} \right] \. = \. O\bigl( n^{\delta-1}+ n^{\alpha-(\delta/2)+\eps}\bigr).
		\end{align}
		Furthermore, from~\eqref{eq:prop_1_moement_conv_2} and~\eqref{eq:prop_1_moement_conv_4}, we have:
		\begin{align}
		\left|\E [ \ov{X}_{1,n+1}] \right|  = O( n^{\delta-1}+ n^{\alpha-(\delta/2)+\eps}).
		\end{align}
		Since $\alpha,\eps>0$ were arbitrary, letting $\alpha \to 0$ gives the second equation in the proposition.
		
		It remains to show the last equation the proposition. Suppose $B>B_{c}$ and $1/2<\delta<1$. We rewrite~\eqref{eq:prop_1_moement_conv_3} as
		\begin{align}\label{eq:prop_1_moement_conv_5}
		\frac{\lfloor BC n \rfloor}{n} - z_{1,n+1} - (r/n)\ts z_{11} & \, = \,
		\E\left[(r/n)\ts X_{11}\land n^{\alpha}\ts -\ts (r/n)\ts z_{11}\right] \. + \. \E\left[ X_{1,n+1}\land n^{\alpha} - z_{1,n+1}\right]   \\
		&\qquad + \, \E\left[ \bigl( (r/n)\ts X_{11} - n^{\alpha} \bigr)^{+} \right] \. + \. \E\left[ \left( X_{1,n+1} - n^{\alpha} \right)^{+} \right].
		\end{align}
By Lemma~\ref{lemma:typical_corner}, the left hand side is of order $O(n^{\delta-1})$. The first term in the right hand side is of order $O(n^{\alpha + (1/2)-\delta+\eps})$ by Proposition \ref{prop:first_moment_truncation_bd}. Noting the bound in~\eqref{eq:prop_1_moement_conv_4} and that the last two terms in the right hand side are nonnegative, we deduce
\begin{align}
		\E\left[ \bigl( (r/n)\ts X_{11} - n^{\alpha} \bigr)^{+} \right] \, = \, O\bigl(n^{\delta-1}+n^{\alpha+(1/2)-\delta+\eps}\bigr),\quad\, \E\left[ \left( X_{1,n+1} - n^{\alpha} \right)^{+} \right] \, = \, O\bigl(n^{\delta-1}+n^{\alpha+(1/2)-\delta+\eps}\bigr).
\end{align}
This gives
\begin{align}
		n^{\delta-1} \ts\E\bigl[\ov{X}_{11}\bigr] \, = \, O\bigl( n^{\delta-1}+ n^{\alpha+(1/2)-\delta+\eps}\bigr),\qquad \E\left[ \ov{X}_{1,n+1} \right] \, = \,
        O\bigl( n^{\delta-1}+ n^{\alpha+(1/2)-\delta+\eps}\bigr).
\end{align}
		Since $\alpha,\eps>0$ are arbitrary, letting $\alpha\to 0$ proves the last equation in the proposition and completes the proof.
	\end{proof}

	We can now derive Theorem~\ref{thm:main_expectation} from Lemma~\ref{lemma:typical_corner}, Proposition~\ref{prop:first_moment_truncation_bd} and Proposition~\ref{prop:first_moment_error_bound}.
	
	\begin{proof}[\textbf{Proof of Theorem~\ref{thm:main_expectation}}]
		Let $\ov{X}=(\ov{X}_{ij})=X-Z$ denote the error table as before. From~\eqref{eq:limit_z_n+1,n+1} and the first equation in Proposition~\ref{prop:first_moment_error_bound}, we have
		\begin{align}
		\bigl|\E[X_{n+1,n+1}] - C\bigr| \,\le\, \bigl|\E[\ov{X}_{n+1,n+1}]\bigr| \. + \. |z_{n+1,n+1}-C| \,=\, O\bigl(n^{\delta-1}\bigr),
		\end{align}
		which holds for all $0\le \delta< 1$ and $B,C>0$. This shows (i).
		
		Next, we show (ii). Fix $\eps>0$. Suppose $0<\delta<1$ and $B<B_{c}$.  Then by  Proposition~\ref{prop:first_moment_error_bound} and Lemma~\ref{lemma:typical_corner}, we have
		\begin{align}
		\bigl|\E[X_{1,n+1}] - BC\bigr| \, \le \, \bigl|\E[\ov{X}_{1,n+1}]\bigr|
		\. + \. |z_{1,n+1}-BC| \, = \, O\bigl(n^{\delta-1} + n^{-(\delta/2)+\eps}\bigr).
		\end{align}
		Moreover, if $1/2<\delta<1$ and $B>B_{c}$, then similarly we have
		\begin{align}
		\bigl|\E[X_{1,n+1}] - B_{c}C\bigr| \, \le \, \bigl|\E[\ov{X}_{1,n+1}]\bigr|
		\. + \. |z_{1,n+1}-B_{c}C| \, = \, O\bigl(n^{\delta-1} + n^{(1/2)-\delta+\eps}\bigr).
		\end{align}
		Furthermore, if $0<\delta<1$ and $B>B_{c}$, then for every fixed $\alpha>0$, our Lemma~\ref{lemma:typical_corner} and Proposition~\ref{prop:first_moment_truncation_bd} give:
		\begin{align}
		\bigl|\E[X_{1,n+1}\land n^{\alpha}] - B_{c}C \bigr| \, \le \, \bigl|\E[X_{1,n+1}\land n^{\alpha}] - z_{1,n+1} \bigr|
		\. + \. |z_{1,n+1} - B_{c}C| \, = \, O\bigl(n^{\alpha-(\delta/2)+\eps} + n^{\delta-1}\bigr).
		\end{align}
		This shows~(ii).
		
		Finally, we obtain~(iii) as follows. Fix $\eps>0$. The first equation for $B<B_{c}$ follows
		from the second equation in Proposition~\ref{prop:first_moment_error_bound}.
		If we further assume that $1/2<\delta<1$, then by Proposition~\ref{prop:first_moment_truncation_bd}
		and Lemma~\ref{lemma:typical_corner}, we have:
		\begin{align}
		& \left| \E[X_{11}\land n^{\alpha}] \. -  \. \frac{B^{2}(1+C)}{(B_{c}-B)(B+B_{c}-2)}\right|
		\, \le\, \left| z_{11} \. - \. \frac{B^{2}(1+C)}{(B_{c}-B)(B+B_{c}-2)}   \right|\,+ \\
		& \hskip1.5cm \. + \. \bigl| \E[X_{11}\land n^{\alpha}] \. - \. z_{1,n+1}\bigr|  \, = \, O\bigl(n^{\delta-1} \ts + \ts n^{\alpha + (1/2)-\delta + \eps}\bigr).
		\end{align}
		Denote \ts $r=\lfloor n^{\delta} \rfloor$.  For $1/2<\delta<1$ and $B>B_{c}$, by Proposition~\ref{prop:first_moment_error_bound} and Lemma~\ref{lemma:typical_corner} we have:
		\begin{align}
		\bigl| (r/n)\ts\E[X_{11}]  - C(B-B_{c})\bigr| &\, \le\, \bigl| (r/n)\ts\E[X_{11}]  - (r/n)\ts z_{11}\bigr| \. + \.
		\bigl|(r/n)\ts z_{11}-C(B-B_{c})(r/n)\bigr|\\
		&\, = \. O\bigl(n^{\delta-1} + n^{\alpha+(1/2)-\delta+\eps}\bigr).
		\end{align}

		It remains to prove~(iii) for $0<\delta<1$ and $B>B_{c}$. Fix $\alpha>0$. Note that
		\begin{align}\label{eq:pf_thm_expectation_1}
		\frac{\lfloor BC n\rfloor}{n}  - B_{c}C \, = \, \frac{\lfloor n^{\delta} \rfloor}{n}\E[X_{11}]\. + \. \E[(X_{1,n+1}-n^{\alpha})^{+}] \. + \. (z_{1,n+1}-B_{c}C) + \E[X_{1,n+1}\land n^{\alpha}-z_{1,n+1}]\ts.
		\end{align}
		By Lemma~\ref{lemma:typical_corner} and Proposition~\ref{prop:first_moment_truncation_bd} (ii), the last two terms in the right hand side are of order $O(n^{\delta-1})$ and $O(n^{\alpha - (\delta/2) + \eps})$, respectively. Hence the difference between the left hand side and the sum of the first two terms in the right hand side must be of order $O(n^{\delta-1} +n^{\alpha - (\delta/2) + \eps} )$. This completes the proof of the theorem.
	\end{proof}

	\vspace{0.2cm}
	
	\section{Convergence of higher moments and law of large numbers}
	
	In this section, we prove Theorem~\ref{thm:SLLN}. Our argument is based on bounding the joint second moments of the entries in the uniform contingency table $X\in \mathcal{M}_{n,\delta}(B,C)$. We begin by generalizing Proposition~\ref{prop:first_moment_truncation_bd} into higher moments.

	\begin{prop}\label{prop:moment_truncation_bd}
		Fix $B,C>0 $ and denote $B_{c}=1+\sqrt{1+1/C}$. Let $X=(X_{ij})$ be sampled from $\M_{n,\delta}(B,C)$ uniformly at random. Let $Z=(z_{ij})$ be the typical table for $\mathcal{M}_{n,\delta}(B,C)$. Fix integers $k\ge 1$ and $1\le i_{\ell},j_{\ell}\le n+\lfloor n^{\delta} \rfloor$, $1\le \ell\le k$. Let $\eta(\delta)$ be defined as~\eqref{eq:def_eta_exponent}, and denote
		\begin{align}
		m_{11} = \text{$\#$ of $(i_{\ell},j_{\ell})$'s in the top left block of $\mathcal{M}_{n,\delta}(B,C)$}
		\end{align}
		Fix $\alpha,\eps>0$. Then unless both $m_{11}>0$ and $B=B_{c}$, for all sufficiently large $n\ge 1$, we have
		\begin{align}
		\left| \E\left[  \left( \prod_{\ell=1}^{k} X_{i_{\ell},j_{\ell}} \right)\land n^{\alpha + (1-\delta)m_{11}\mathbf{1}(B>B_{c})}\right] - \prod_{\ell=1}^{k} z_{i_{\ell},j_{\ell}} \right|
		\le n^{\alpha-\eta(\delta) + \eps}.
		\end{align}
	\end{prop}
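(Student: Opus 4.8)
The plan is to follow the template of the proof of Proposition~\ref{prop:first_moment_truncation_bd}, with a single entry replaced by the product $\Pi := \prod_{\ell=1}^{k} X_{i_\ell,j_\ell}$ and first moments replaced by the corresponding truncated moments of $\Pi$. Let $Y=(Y_{ij})$ be the matrix of independent entries with $Y_{ij}\sim\Geom(z_{ij})$ and set $\Pi_Y := \prod_{\ell=1}^{k} Y_{i_\ell,j_\ell}$; since the indices $(i_\ell,j_\ell)$ are distinct, independence gives $\E[\Pi_Y]=\prod_{\ell=1}^{k} z_{i_\ell,j_\ell}$, so the quantity to be controlled is $\bigl|\E[\Pi\land M_n]-\E[\Pi_Y]\bigr|$ with $M_n := n^{\alpha+(1-\delta)m_{11}\mathbf{1}(B>B_c)}$. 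The first ingredient is Theorem~\ref{thm:geo_approximation}, which gives $d_{TV}(\Pi,\Pi_Y)\le n^{-\eta(\delta)+\eps}$ for all large $n$, with $\eta(\delta)$ exactly as in \eqref{eq:def_eta_exponent}. This is the only step that uses the block positions of the indices, and it alone produces the exponent $\eta(\delta)$ in the conclusion.

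The crux is to rescale before truncating, so that the geometric side has tails uniform in $n$. Put $\rho_n := n^{(1-\delta)m_{11}\mathbf{1}(B>B_c)}$ and $\widetilde\Pi := \rho_n^{-1}\Pi$, $\widetilde\Pi_Y := \rho_n^{-1}\Pi_Y$. Each factor of $\widetilde\Pi_Y$ is either an entry $Y_{i_\ell,j_\ell}$ lying in the sides or bottom-right block, or --- when $B>B_c$ and $(i_\ell,j_\ell)$ is in the top-left block --- a rescaled entry $n^{\delta-1}Y_{i_\ell,j_\ell}$. By Lemma~\ref{lemma:typical_corner} and \eqref{eq:limit_z_n+1,n+1}, the means of all of these converge as $n\to\infty$ (to $C$, $BC$, $B_cC$, or $C(B-B_c)$ according to the block and regime), hence stay bounded uniformly in $n$; so each factor satisfies a sub-exponential tail bound $\P(\,\cdot\,\ge t)\le A e^{-at}$ with $A,a>0$ independent of $n$. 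Consequently $\widetilde\Pi_Y$, a product of $k$ such factors, satisfies $\P(\widetilde\Pi_Y\ge t)\le k A e^{-at^{1/k}}$ uniformly in $n$, and $\E[\widetilde\Pi_Y]=\rho_n^{-1}\prod_\ell z_{i_\ell,j_\ell}=O(1)$. This is precisely where the hypothesis forbidding the coincidence $m_{11}>0$ and $B=B_c$ is needed: at $B=B_c$, Lemma~\ref{lemma:typical_corner} gives no information on the growth of $z_{11}$, so neither the right normalization $\rho_n$ nor the uniform tail bound is available.

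The last step transfers total-variation closeness into $L^1$-closeness under truncation, exactly as in Propositions~\ref{prop:geo_truncation} and~\ref{prop:TV_expectation}. Since a deterministic rescaling preserves total variation distance, $d_{TV}(\widetilde\Pi,\widetilde\Pi_Y)=d_{TV}(\Pi,\Pi_Y)\le n^{-\eta(\delta)+\eps}$. Expanding $\E[\widetilde\Pi\land n^\alpha]-\E[\widetilde\Pi_Y\land n^\alpha]$ as a sum over the grid $\rho_n^{-1}\nn$ (whose spacing is $\le 1$) and bounding it by $n^\alpha$ times $\sum_v\bigl|\P(\widetilde\Pi=v)-\P(\widetilde\Pi_Y=v)\bigr|$ --- a constant multiple of $n^\alpha\, d_{TV}(\widetilde\Pi,\widetilde\Pi_Y)$ --- gives $\bigl|\E[\widetilde\Pi\land n^\alpha]-\E[\widetilde\Pi_Y\land n^\alpha]\bigr|=O(n^{\alpha-\eta(\delta)+\eps})$; and the tail bound of the previous paragraph gives $\bigl|\E[\widetilde\Pi_Y\land n^\alpha]-\E[\widetilde\Pi_Y]\bigr|=\E\bigl[(\widetilde\Pi_Y-n^\alpha)^{+}\bigr]$, which decays faster than any power of $n$. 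Combining, $\bigl|\E[\widetilde\Pi\land n^\alpha]-\rho_n^{-1}\prod_\ell z_{i_\ell,j_\ell}\bigr|=O(n^{\alpha-\eta(\delta)+\eps})$; multiplying through by $\rho_n$ and using $\rho_n(\widetilde\Pi\land n^\alpha)=\Pi\land M_n$ gives $\bigl|\E[\Pi\land M_n]-\prod_\ell z_{i_\ell,j_\ell}\bigr|=O\bigl(\rho_n\,n^{\alpha-\eta(\delta)+\eps}\bigr)$, the bound of the proposition (the exponent in the statement being understood for the rescaled truncation $\widetilde\Pi\land n^\alpha$, in keeping with the $B>B_c$ case of Proposition~\ref{prop:first_moment_truncation_bd}(iii)). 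The only genuinely delicate point is the middle paragraph --- fixing the normalization $\rho_n$ and the accompanying $n$-uniform sub-exponential tails via Lemma~\ref{lemma:typical_corner}; everything else is bookkeeping of the exponent coming out of Theorem~\ref{thm:geo_approximation} together with the now-routine truncation lemmas.
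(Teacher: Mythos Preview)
Your proof is correct and matches the paper's approach exactly: the paper's own proof reads in full ``Similar to the proof of Proposition~\ref{prop:first_moment_truncation_bd}. Details are omitted.'' Your careful handling of the rescaling by $\rho_n$, and the parenthetical remark that the stated bound is to be read for the rescaled truncation (in keeping with the $B>B_c$ clause of Proposition~\ref{prop:first_moment_truncation_bd}(iii)), are on target and consistent with how the result is actually applied in Proposition~\ref{prop:second_moment_bound}.
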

	
	\begin{proof}
		Similar to the proof of Proposition~\ref{prop:first_moment_truncation_bd}. Details are omitted.
	\end{proof}

	\begin{prop}\label{prop:second_moment_bound}
		Fix $B,C> 0$ and $0\le \delta<1$ Let $X=(X_{ij})$ be sampled from $\M_{n,\delta}(B,C)$ uniformly at random. Let $Z=(z_{ij})$ be the typical table for $\M_{n,\delta}(B,C)$. Denote $(\ov{X}_{ij}) = X - Z$ and $r=\lfloor n^{\delta} \rfloor$. For $\delta>1/2$ and all $\eps>0$, we have:
		\begin{align}
		\E\left[ \left( \sum_{\ell=1}^{r}\ov{X}_{1\ell} \right)^{2} \right] \, + \, \E\left[ \left( \sum_{\ell=r+1}^{n+r}\ov{X}_{1\ell} \right)^{2} \right] \, + \, \left|\. 	 \E\left[ \left( \sum_{\ell=1}^{r}\ov{X}_{1\ell} \right)\left( \sum_{\ell=r+1}^{n+r}\ov{X}_{1\ell} \right) \right] \right| \, = \, O\bigl(n^{(5/2)-\delta+\eps}\bigr).
		\end{align}
		Furthermore, a similar bound holds for the $(n+1)$-th row for all $0\le \delta< 1$.
	\end{prop}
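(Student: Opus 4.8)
The plan is to reduce all three quantities to one second‑moment estimate via the vanishing of the row sums of the error table, split that row sum into a bounded (truncated) piece and a large–excess piece, estimate the truncated piece by a two–term covariance expansion, and control the excess piece by feeding the exact margin identities into the truncation decomposition.

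\textbf{Reduction.} By \eqref{eq:margin_condition_Xbar} every row of $\ov{X}=X-Z$ sums to zero, so $U:=\sum_{\ell=1}^{r}\ov{X}_{1\ell}$ and $V:=\sum_{\ell=r+1}^{n+r}\ov{X}_{1\ell}$ satisfy $U=-V$; hence the three expectations in the statement all equal $\E[V^{2}]$, and it is enough to prove $\E[V^{2}]=O(n^{5/2-\delta+\eps})$, and likewise $\E[V_{n+1}^{2}]=O(n^{5/2-\delta+\eps})$ for the row‑$(n+1)$ sum using the second identity in \eqref{eq:margin_condition_Xbar}. Fix a small $\alpha>0$ and write $V=\tilde V+\mathcal E$ with $\tilde V:=\sum_{\ell=r+1}^{n+r}(X_{1\ell}\wedge n^{\alpha}-z_{1,n+1})$ and $\mathcal E:=\sum_{\ell=r+1}^{n+r}(X_{1\ell}-n^{\alpha})^{+}$, so that $\E[V^{2}]\le 2\E[\tilde V^{2}]+2\E[\mathcal E^{2}]$.

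\textbf{Truncated part.} Here $\E[\tilde V^{2}]=\Var(\tilde V)+(\E\tilde V)^{2}$, with $\E\tilde V=n(\E[X_{1,n+1}\wedge n^{\alpha}]-z_{1,n+1})=O(n^{1+\alpha-\delta/2+\eps})$ by Proposition~\ref{prop:first_moment_truncation_bd}(ii) and, by exchangeability, $\Var(\tilde V)=n\Var(X_{1,n+1}\wedge n^{\alpha})+n(n-1)\Cov(X_{1,n+1}\wedge n^{\alpha},X_{1,n+2}\wedge n^{\alpha})$. The one–entry variance is $O(1)$; for the covariance one uses that $(X_{1,n+1}\wedge n^{\alpha})(X_{1,n+2}\wedge n^{\alpha})$ equals $X_{1,n+1}X_{1,n+2}\wedge n^{2\alpha}$ except on the event that exactly one of the two entries exceeds $n^{\alpha}$, of probability $O(n^{-\delta/2+\eps})$ by Theorem~\ref{thm:geo_approximation}, while Proposition~\ref{prop:moment_truncation_bd} ($k=2$, two distinct top‑right indices, $\eta(\delta)=\delta/2$) gives $\E[X_{1,n+1}X_{1,n+2}\wedge n^{2\alpha}]=z_{1,n+1}^{2}+O(n^{2\alpha-\delta/2+\eps})$; hence $|\Cov(\cdot,\cdot)|=O(n^{2\alpha-\delta/2+\eps})$. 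Summing and letting $\alpha\downarrow0$ yields $\E[\tilde V^{2}]=O(n^{2-\delta/2+\eps})=o(n^{5/2-\delta})$ since $\delta<1$. (For row $n+1$ the relevant block is the bottom right, $\eta(\delta)=1/2$ and $z_{n+1,n+1}\le C$, and the same computation gives $O(n^{3/2+\eps})$, valid for all $0\le\delta<1$.)

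\textbf{Excess part — the crux.} I would split on the sign of $B-B_{c}$. If $B>B_{c}$, bound $0\le\mathcal E\le\lfloor BCn\rfloor$ entrywise, so $\mathcal E^{2}\le\lfloor BCn\rfloor\mathcal E$ and $\E[\mathcal E^{2}]\le BCn\cdot n\,\E[(X_{1,n+1}-n^{\alpha})^{+}]$. The exact margin identity $(r/n)z_{11}+z_{1,n+1}=\lfloor BCn\rfloor/n$ makes the left side of \eqref{eq:prop_1_moement_conv_5} vanish identically, so by non‑negativity of the two tail terms there,
\begin{align*}
\E[(X_{1,n+1}-n^{\alpha})^{+}]\,\le\,\bigl|\E[(r/n)X_{11}\wedge n^{\alpha}-(r/n)z_{11}]\bigr|+\bigl|\E[X_{1,n+1}\wedge n^{\alpha}-z_{1,n+1}]\bigr|\,=\,O(n^{\alpha+1/2-\delta+\eps})
\end{align*}
by Proposition~\ref{prop:first_moment_truncation_bd}(ii),(iii) and Lemma~\ref{lemma:typical_corner}; letting $\alpha\downarrow0$ gives $\E[\mathcal E^{2}]=O(n^{5/2-\delta+\eps})$. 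If instead $B<B_{c}$ (and for the row‑$(n+1)$ sum, for every $B$), use the sharper deterministic bound $0\le\mathcal E\le\lfloor BCn\rfloor-\sum_{\ell=1}^{n+r}(X_{1\ell}\wedge n^{\alpha})$ from the full first–row identity, so that $\E[\mathcal E^{2}]\le\E[(\lfloor BCn\rfloor-\sum_{\ell=1}^{n+r}(X_{1\ell}\wedge n^{\alpha}))^{2}]$; since $\lfloor BCn\rfloor=rz_{11}+nz_{1,n+1}$ and both $z_{11},z_{1,n+1}$ are bounded in this regime, Proposition~\ref{prop:first_moment_truncation_bd} bounds the mean of $\lfloor BCn\rfloor-\sum_{\ell}(X_{1\ell}\wedge n^{\alpha})$ by $O(n^{1+\alpha-\delta/2+\eps})$, and a two–term covariance expansion as above — noting that among the $\approx n^{2}$ index pairs only $r^{2}=O(n^{2\delta})$ lie in the top‑left block and contribute $O(n^{\delta+1/2+2\alpha+\eps})$ (this uses $\eta(\delta)=\delta-1/2>0$, i.e.\ $\delta>1/2$) — bounds the variance by $O(n^{2-\delta/2+2\alpha+\eps})$, giving $\E[\mathcal E^{2}]=O(n^{2-\delta/2+\eps})$ after $\alpha\downarrow0$.

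\textbf{Conclusion and obstacle.} Combining the two parts gives $\E[V^{2}]=O(n^{5/2-\delta+\eps})$, with the row‑$(n+1)$ sum controlled by the (better) exponent $3/2$ and no restriction on $\delta$, consistent with the hypotheses; the hypothesis $\delta>1/2$ for the row‑$1$ statement enters only through the covariance estimate for two top‑left entries, meaningful only when $\eta(\delta)=\delta-1/2>0$. The main obstacle is the $B>B_{c}$ excess estimate: every other contribution comes out at the natural exponent $2-\delta/2$ (or $3/2$), and it is solely the bound $\E[\mathcal E^{2}]\le BCn^{2}\,\E[(X_{1,n+1}-n^{\alpha})^{+}]$ — which cannot be improved without removing a truncation the authors flag as out of reach in Section~\ref{s:conj} — that forces the exponent up to $5/2-\delta$.
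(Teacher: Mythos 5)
Your proof is correct, and it takes a genuinely different route from the paper's. The paper squares the row-sum identity, expands it into five covariance-type terms weighted by block sizes, and then bounds each term individually by splitting it into a truncated piece (controlled by Proposition~\ref{prop:moment_truncation_bd}) and a nonnegative excess piece (controlled by the identity itself). You instead observe the cleaner fact that $U:=\sum_{\ell\le r}\ov X_{1\ell}=-\sum_{\ell>r}\ov X_{1\ell}=:-V$, so that all three quantities in the statement reduce to $\E[V^2]$ at once; you then decompose $V$ into a truncated part $\tilde V$ and an excess part $\mathcal E$, treat $\tilde V$ by a variance/mean expansion with the covariance bound extracted from Proposition~\ref{prop:moment_truncation_bd}, and treat $\mathcal E$ by the deterministic bound $0\le\mathcal E\le\lfloor BCn\rfloor$ together with the tail-expectation bound that falls out of the exact margin identity (as in the proof of Proposition~\ref{prop:first_moment_error_bound}). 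Both approaches arrive at $O(n^{5/2-\delta+\eps})$, but your version isolates more cleanly where the exponent $5/2-\delta$ actually comes from: the $B>B_c$ excess estimate $\E[\mathcal E^2]\le\lfloor BCn\rfloor\,n\,\E[(X_{1,n+1}-n^\alpha)^+]$, with every other contribution landing at the smaller exponent $2-\delta/2$ (or $3/2$ for row $n+1$). The only small imprecision is your remark that the hypothesis $\delta>1/2$ enters ``only'' through the top-left covariance estimate: it also enters in the $B>B_c$ branch via Proposition~\ref{prop:first_moment_truncation_bd}(iii), which is what makes the tail term $\E[(X_{1,n+1}-n^\alpha)^+]$ of order $n^{\alpha+1/2-\delta+\eps}$; both uses stem from the same underlying concentration requirement on the top-left block, so this does not affect the argument.
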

	
	\begin{proof}
		We only prove the first part of the proposition, for $1/2<\delta<1$. A similar argument shows the second part.	
		
		Taking square and expectation for the first equation in~\eqref{eq:margin_condition_Xbar} give
		\begin{align}
		\sum_{1\le i,j\le n+r} \E\left[ \ov{X}_{1i}\ov{X}_{1j} \right]  = 0.
		\end{align}
		Using the symmetry of the entries in each block, we can write the above equation as
		\begin{align}
		r \ts \E\left[ \ov{X}_{11}^{2} \right] \. + \. n \E\left[ \ov{X}_{1,n+1}^{2} \right] \.  + \.
        r(r-1) \ts \E\left[ \ov{X}_{11}\ov{X}_{12} \right] \. + \. n(n-1) \ts \E\left[ \ov{X}_{1,n+1}\ov{X}_{1,n+2} \right]\. +\. rn \ts
        \E\left[ \ov{X}_{11}\ov{X}_{1,n+1} \right]=0.
		\end{align}
		Denote $W_{1i} = r\ov{X}_{1i}/n$ for $1\le i \le r$. Then we have:
		\begin{align}
		\frac{1}{r} \E\left[ W_{11}^{2} \right]  \. + \. \frac{1}{n} \. \E\left[ \ov{X}_{1,n+1}^{2} \right] \. + \.
        \left(1- \frac{1}{r} \right) \ts \E\left[ W_{11}W_{12} \right] \. + \.
        \left(1- \frac{1}{n} \right) \ts \E\left[ \ov{X}_{1,n+1}\ov{X}_{1,n+2} \right]\. +\. \E\left[ W_{11}\ov{X}_{1,n+1} \right]\, = \, 0\ts.
		\end{align}

		Now fix $\alpha,\eps>0$ and denote $x^{+} = \max(x,0)$. Write
		\begin{align}\label{eq:prop_moement_conv_1}
		\E\left[ \ov{X}_{1,n+1}\ov{X}_{1,n+2}\right] \, = \, \E\left[ X_{1,n+1}X_{1,n+2}\land n^{\alpha}
        \ts - \ts z_{1,n+1}^{2}\right] \. + \. \E\left[ \left( X_{1,n+1}X_{1,n+2} \ts -\ts n^{\alpha} \right)^{+} \right]\ts.
		\end{align}
		Note that by Proposition~\ref{prop:moment_truncation_bd}, we have
		\begin{align}\label{eq:prop_moement_conv_2}
		\left| \E\left[ X_{1,n+1}X_{1,n+2}\land n^{\alpha} \. - \. z_{1,n+1}^{2}\right] \right| \, = \, O\bigl(n^{\alpha -(\delta/2)+\eps}\bigr).
		\end{align}
		Similarly, we can write
		\begin{align}
		\E\left[ W_{11}W_{12}\right] \. = \. \E\left[(rX_{11}/n)(rX_{12}/n)\land n^{\alpha} - (rz_{11}/n)^{2}\right] \. + \.
        \E\left[ \left( (rX_{11}/n)(rX_{12}/n) \ts - \ts n^{\alpha} \right)^{+} \right],
		\end{align}
		and Proposition~\ref{prop:moment_truncation_bd} gives
		\begin{align}
		\left| \E\left[(rX_{11}/n)(rX_{12}/n)\land n^{\alpha} \ts - \ts (rz_{11}/n)^{2}\right] \right| \, = \, O\bigl(n^{\alpha + (1/2)-\delta+\eps}\bigr).
		\end{align}
		By truncating the other terms and applying Proposition~\ref{prop:moment_truncation_bd} similarly, this gives
		\begin{align}
		&\frac{1}{r}\. \E\left[\left( (rX_{11}/n)^{2}-n^{\alpha} \right)^{+} \right] \. +\. \frac{1}{n}\. \E\left[ \left( X_{1,n+1}^{2}- n^{\alpha} \right)^{+}\right]
		\. + \. \left(1- \frac{1}{r}\right) \E\Bigl[ \bigl((rX_{11}/n)(rX_{12}/n)- n^{\alpha} \bigr)^{+}\bigr] \\
		&\quad + \left(1- \frac{1}{n}\right) \E\left[ \left( X_{1,n+1}X_{1,n+2}- n^{\alpha} \right)^{+}\right] + \E\left[ \left((rX_{11}/n) X_{1,n+1}- n^{\alpha} \right)^{+}\right]\,  = \, O\bigl(n^{\alpha + (1/2)-\delta+\eps}\bigr)\ts.
		\end{align}
		Since all terms in the left hand side of the above equation are nonnegative, they have order $O(n^{\alpha + (1/2)-\delta+\eps})$ individually. This give bounds on the truncation error for each moments.
		
		From~\eqref{eq:prop_moement_conv_1}, \eqref{eq:prop_moement_conv_2}, and the above bound on the truncation error, we deduce that
		\begin{align}
		\bigl|\E[\ov{X}_{1,n+1}\ov{X}_{1,n+2}]\bigr| \, = \, O\bigl(n^{\alpha+(1/2)-\delta+\eps}\bigr).
		\end{align}
		Since $\alpha,\eps>0$ were arbitrary, this gives:
		\begin{align}
		\bigl|\E[\ov{X}_{1,n+1}\ov{X}_{1,n+2}]\bigr| \,= \, O\bigl(n^{(1/2)-\delta+\eps}\bigr) \quad \text{for all} \ \eps>0\ts.
		\end{align}
		Similarly, we obtain:
		\begin{align}
		&\bigl|\E[W_{11}^{2}]\bigr| \, = \, O\bigl(n^{(1/2)+\eps}\bigr), \qquad \
		\bigl|\E[\ov{X}_{1,n+1}^{2}]\bigr| \, = \, O\bigl(n^{(3/2)-\delta+\eps}\bigr), \\
		&\bigl|\E[W_{11}W_{12}]\bigr| \. + \. \bigl|\E[X_{11}\ov{X}_{1,n+1}]\bigr| \, = \, O\bigl(n^{(1/2)-\delta+\eps}\bigr).
		\end{align}
		The result now follows by combining these bounds.
	\end{proof}

	
	\begin{proof}[\textbf{Proof of Theorem~\ref{thm:SLLN}}]
		We only prove the first part of the theorem, for $1/2<\delta<1$. A similar argument shows the second part.	
		
		Let $Z=(z_{ij})$ be the typical table for $\M_{n,\delta}(B,C)$. Let $S_{n,\delta}(B,C)$ denote the sum of the entries in the first row of the up right block, as defined in Conjecture \ref{conj:CLT}. According to Lemma~\ref{lemma:typical_corner}, as $n\rightarrow \infty$,
		\begin{align}
		z_{1,n+1} \rightarrow
		\begin{cases}
		BC & \text{if $B<B_{c}$} \\
		B_{c}C & \text{if $B>B_{c}$}.
		\end{cases}
		\end{align}
		Hence by the triangle inequality, it suffices to show that
		\begin{align}\label{eq:thm_SLLN_claim}
		\left| n^{-1}S_{n,\delta}(B,C) - z_{1,n+1} \right| \. \rightarrow \. 0 \quad \ \text{as \. $n\rightarrow \infty$, \ts a.s.}
		\end{align}

		Denote \ts $r=\lfloor n^{\delta} \rfloor$ \ts and \ts $(\ov{X}_{ij}) = X - Z$. Let
		\begin{align}
		\ov{S}_{n,\delta}(B,C) \, := \, S_{n,\delta}(B,C) \. - \. n\ts z_{1,n+1} \, = \, \ov{X}_{1,r+1} + \ov{X}_{1,r+2} + \ldots+ \ov{X}_{1,r+n}\..
		\end{align}
		By the Markov inequality, we have:
		\begin{align}
		\P\left( \ov{S}_{n,\delta}(B,C) \ge t \right) \, \le \, \frac{1}{t^2} \. \E\left[ \left( \sum_{k=r+1}^{n+r}\ov{X}_{1k} \right)^{2} \right].
		\end{align}
		Hence, by Proposition~\ref{prop:second_moment_bound}, we have that for every $\xi,\eps>0$ there exists a
		constant $c>0$, s.t.
		\begin{align}
		\P\left( \ov{S}_{n,\delta}(B,C) \ge n^{1-\xi} \right) \. \le \. c \ts n^{(1/2)-\delta+2\xi+\eps}
		\end{align}
		for all sufficiently large $n\ge 1$. Thus, if we choose \ts $0<\xi<1/2 - \delta$, then for some \ts $\xi',c'>0$ \ts we have:
		\begin{align}
		\P\left( \ov{S}_{n,\delta}(B,C) \ge n^{1-\xi} \right) \. \le \. c' n^{-\xi'}
		\end{align}
		for all sufficiently large $n\ge 1$. This implies that for every sequence \ts $(n_{k})_{k\ge 1}$ \ts s.t.\
		$n_{k}\rightarrow \infty$ as $k\rightarrow \infty$, we can choose a subsequence $n_{k(r)}\rightarrow \infty$
		as $r\rightarrow \infty$, s.t.\
		\begin{align}
		\sum_{r=1}^{\infty} \,\ts \P\left( \frac{\ov{S}_{n_{k(r)},\delta}(B,C)}{n_{k(r)}}\. \ge \. (n_{k(r)})^{-\xi} \right) \. < \.\infty\ts.
		\end{align}
		By the Borel--Cantelli lemma, it follows that
		\begin{align}
		\frac{\ov{S}_{n_{k(r)},\delta}(B,C)}{n_{k(r)}} \rightarrow 0 \quad \text{\. as \, $r\rightarrow \infty$, \. a.s.}
		\end{align}
		Therefore, almost surely,
		\begin{align}
		\liminf_{n\rightarrow \infty} \. n^{-1}\ov{S}_{n,\delta}(B,C) \, = \,
		\limsup_{n\rightarrow \infty} \. n^{-1}\ov{S}_{n,\delta}(B,C) \, = \, 0\ts.
		\end{align}
		This shows~\eqref{eq:thm_SLLN_claim}, and completes the proof.
	\end{proof}

	\vspace{0.3cm}
	
	\medskip
	
	\section{Final remarks and open problems}
	\label{s:finrem}
	
	\subsection{Large $C$ limit}  In the limiting case $C\to \infty$ our results
	give continuous distributions, which can be viewed
	as results on projections of transportation polytopes $\mathcal{P}(\r,\c)$,
	with $\r=\c$ given by~\eqref{eq:def_barvinok_margin} for $C=1$.  Scaled
	properly, in the limit the geometric distribution becomes the exponential
	distribution, so we recover the Theorem~1 in~\cite{CDS} from our Theorem~\ref{thm:main_geo}.
	
	Note that in the continuous case the critical value
	$$B_c \. = \, \lim_{C\to \infty} \. 1+ \sqrt{1+1/C} \. = \. 2\ts.
	$$
	In~\cite{DP}, the authors tested the conjectures for the larger values of~$C$.
	Figure~\ref{fig:Barv-sim-C} is similar to Figure~\ref{fig:Barv-sim} with $C=9$,
	with a subcritical $B=1.8$ and supercritical $B=2.2$, where \ts
	$B_c=1+\sqrt{1+1/9}\approx 2.05$.
	Here fewer trials were used as the algorithm slows down for larger~$C$, so the
	graphs are visibly less smooth.  Again, the normal distribution is evident
	in the supercritical case.
	
	\begin{figure}[hbt]
		\begin{center}
			\includegraphics[width = 0.36 \textwidth]{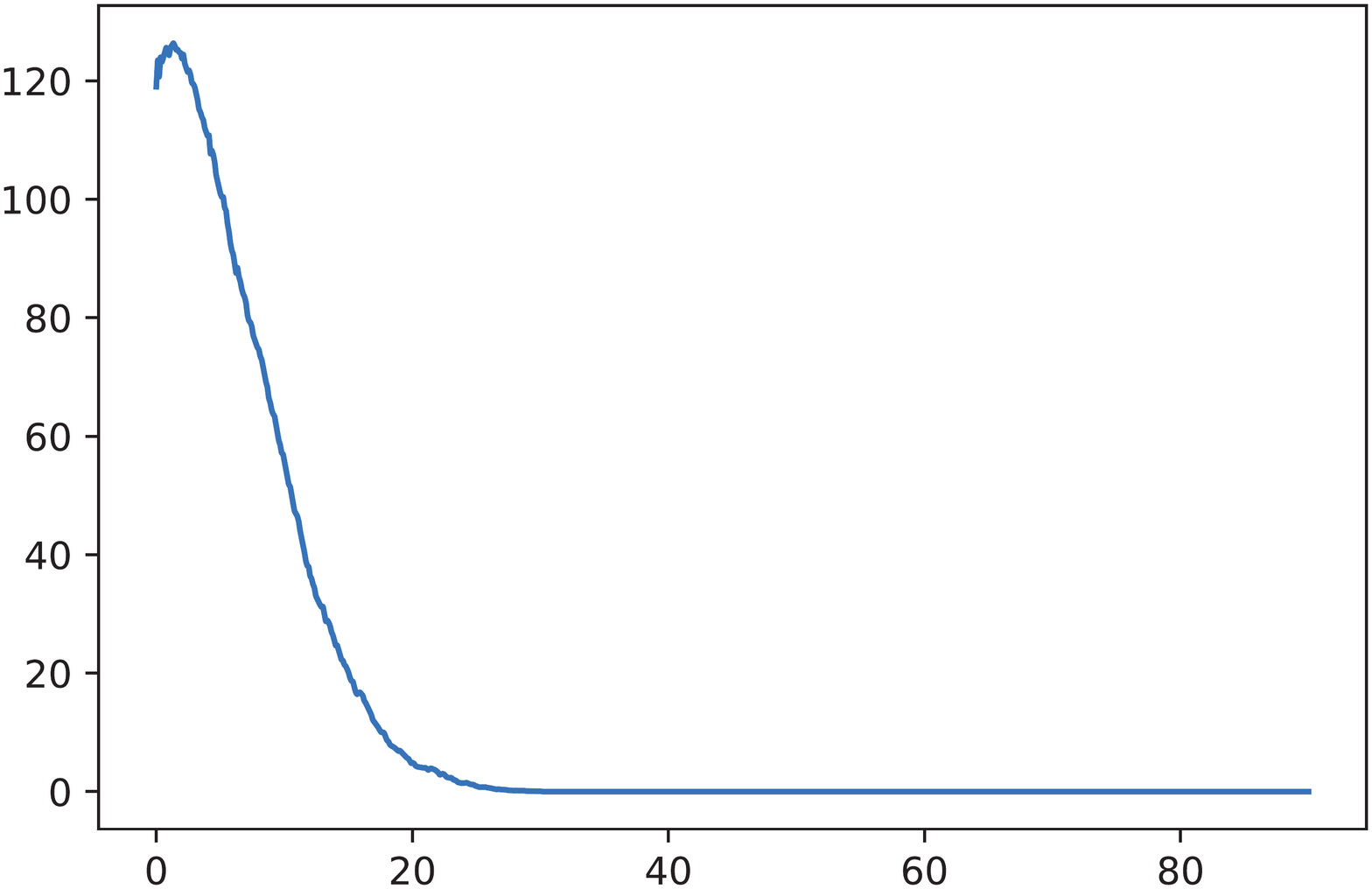} \qquad \ \
			\includegraphics[width = 0.36 \textwidth]{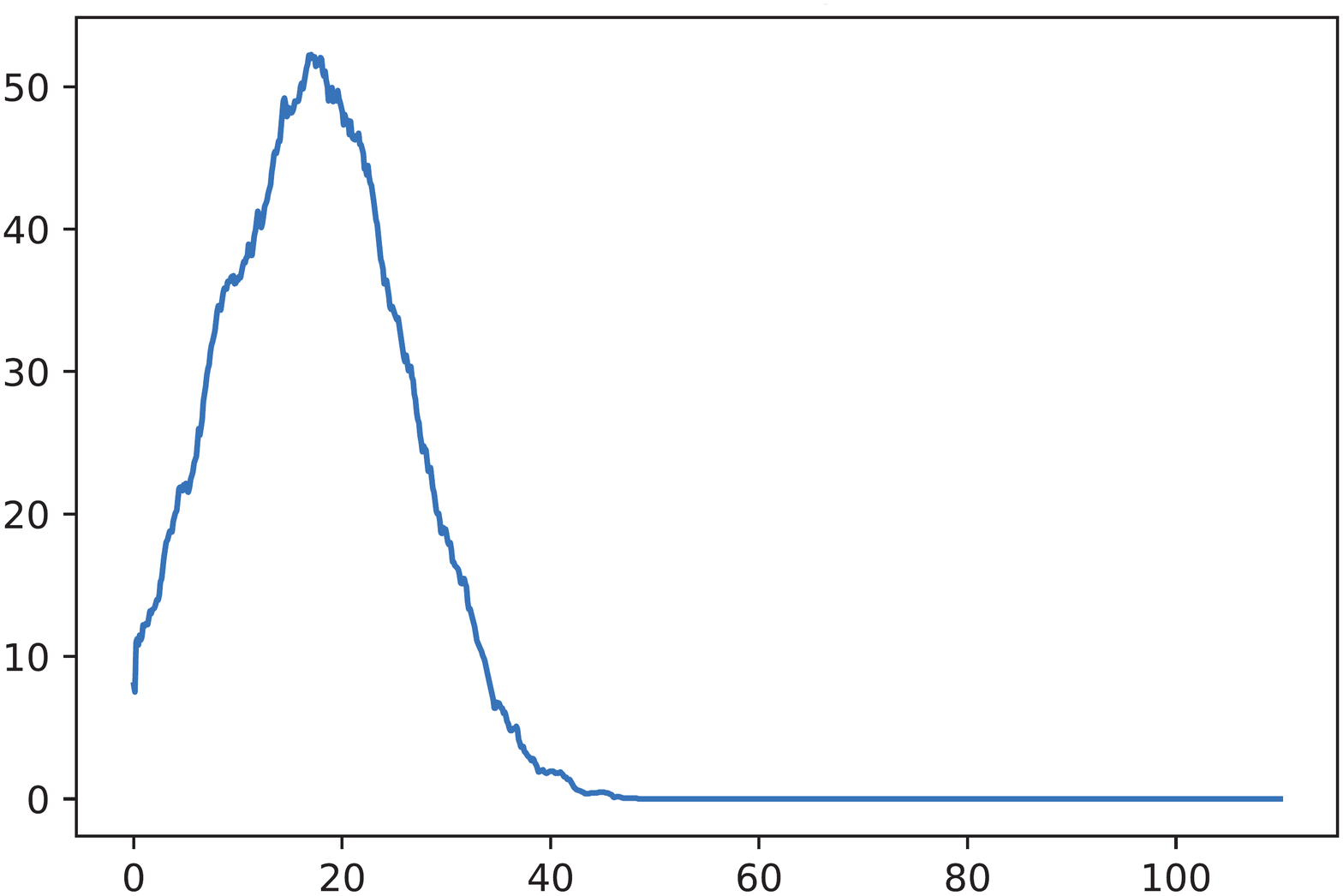}	
		\end{center}
		\caption{Summary of $10^3$ simulations of $x_{11}/C$ in a random uniform $n\times n$ contingency table $X=(x_{ij})$
			with both margins \ts $(BC\ts n,C\ts n,\ldots,C\ts n)$, where $n=50$ and $C=9$.  The left graph corresponds to a subcritical value $B=1.8$,
			and the right graph to a supercritical value $B=2.2$.}
		\label{fig:Barv-sim-C}
	\end{figure}

	\smallskip
	
\subsection{Single special row and column case}
As we mentioned in the introduction, this paper was motivated by Barvinok's
question about the phase transition for a single special row and column
(the case $\de=0$ in our notation).  We are now confident in the conjecture
that the phase transition occurs at $B=B_c$ based on our results and
extensive simulations, see Figure~\ref{fig:Barv-sim} and~\cite{DP}.
In fact, we believe both Conjecture~\ref{conj:CLT} and~\ref{conj:Gaussian}
extend to the single row and column case.  Unfortunately, this case
remains out of reach by the available tools.

	\smallskip
	
\subsection{Symmetric, binary, and high-dimensional tables}
There are several variations on results in this paper
one can consider.  First, one can study \emph{symmetric} contingency
tables $X=X^T$ with the same row and column sums as in~\eqref{eq:def_barvinok_margin}.
These correspond to adjacency matrices of multigraphs with degree vectors~$\r$.
The results in this paper extend verbatim to this setting.
	
It would be even more interesting to understand the cases of general and
symmetric binary (0-1) contingency tables (with zeros on the diagonal
in the symmetric case),
which correspond to simple bipartite and usual graphs.  This is an extremely well
studied family of problems, see~\cite{W2} for a recent thorough survey
and~\cite{B2,BH1} for the typical tables approach in this case.  Note that
since the entries are restricted to 0-1, the resulting distributions are
Bernoulli but one needs to be careful in describing the phase transition.
We believe our $\de >1/2$ results should be possible to modify in this case.
	
It would be also interesting to see if our results extend to
higher dimensional contingency tables, see Remark~\ref{r:high-dim}.
We reserve the judgement in this case; cf.~\cite{DP0} however.
	
	\smallskip
	
\subsection{Criticality}  The phase transition exhibited in the paper is somewhat
different from known examples of phase transition in Statistical
Mechanics, see~\cite{Ye}, Random Graph Theory, see~\cite{AS,Bol},
or MCMC, see~\cite{Dia}. It would be very interesting to understand
exactly what happens at the critical values $B=B_c$ or $\de=1/2$.
Unfortunately, these problems remain out of reach both theoretically and
experimentally.

	\vskip.5cm
	
\subsection*{Acknowledgements}
We are grateful to Sasha Barvinok for telling us about the problem
and for the numerous interesting discussions.  The third author was partially
supported by the NSF.

	\vskip.9cm
	
{\footnotesize

}
\vspace{1cm}
\addresseshere

\newpage

\appendix

\vspace{0.2cm}

\section{Asymptotic independence between entries}	

In this appendix, we show that any two entries in the bottom right block of $\mathcal{M}_{n,\delta}(B,C)$ are asymptotically independent in the sense that the correlation between any two entries of the uniform contingency table $X$ decays polyonimally depending on the blocks that the entries belong. 

\begin{theorem}\label{thm:asymp_indep}
	Fix $\delta\in [0,1]$, $B,C\ge 0$,  and let $X=(X_{ij})$ be sampled from $\M_{n,\delta}(B,C)$ uniformly at random. Fix $\eps>0$. Then for every $\eps\ge 1$, we have  
	\begin{align}
	& \sup_{a,b\in \mathbb{N}} \left| \P(X_{i_{1},j_{1}}=a,\, X_{i_{2},j_{2}}=b) - \P(X_{i_{1},j_{1}}=a) \P(X_{i_{2},j_{2}}=b) \right| \le n^{-\eta(\delta)+\eps}
	\end{align}
	for all sufficiently large $n\ge 1$, where 
	\begin{align}\label{eq:def_eta_exponent}
	\eta(\delta) =
	\begin{cases}
	1/2 & \text{if all $(i_{r},j_{r})$'s are contained in the bottom right block,} \\
	\delta-1/2  & \text{if some $(i_{r},j_{r})$ is contained in the top left block,} \\
	\delta/2 & \text{otherwise}.
	\end{cases}
	\end{align}
\end{theorem}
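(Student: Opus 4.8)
The plan is to compare the \emph{joint} law of the pair $(X_{i_1,j_1},X_{i_2,j_2})$ with that of the pair $(Y_{i_1,j_1},Y_{i_2,j_2})$, where $Z=(z_{ij})$ is the typical table for $\mathcal{M}_{n,\delta}(B,C)$ and $Y=(Y_{ij})$ is the matrix of independent entries with $Y_{ij}\sim\Geom(z_{ij})$. Since $Y_{i_1,j_1}$ and $Y_{i_2,j_2}$ are two distinct entries of $Y$, they are independent, so the law of $(Y_{i_1,j_1},Y_{i_2,j_2})$ is a product measure; the theorem will then follow from closeness of the $X$-pair to the $Y$-pair in total variation together with closeness of the individual marginals. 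This is the two-entry analogue of the route already used for single entries in Theorem~\ref{thm:geo_approximation}.

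\textbf{Step 1: a joint version of Theorem~\ref{thm:geo_approximation}.} First I would establish
\begin{align*}
d_{TV}\bigl((X_{i_1,j_1},X_{i_2,j_2}),\,(Y_{i_1,j_1},Y_{i_2,j_2})\bigr)\,\le\, n^{-\eta(\delta)+\eps}
\end{align*}
for all sufficiently large $n$. This is obtained by rerunning the proofs of Lemma~\ref{lemma:key} and Theorem~\ref{thm:geo_approximation} with the scalar product $W^{I}=\prod_\ell w_{I_\ell}$ replaced by the tuple $W^{I}=(w_{I_1},w_{I_2})\in\nn^2$ and the test set $A\subseteq[0,\infty)$ replaced by an arbitrary $A\subseteq\nn^2$. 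Nothing in those proofs uses that $W^{I}$ is scalar: exchangeability of entries within a block still gives that $X^{I}$ and $Y^{I}$ are equidistributed for each fixed $I$; disjointness of the coordinates appearing in distinct elements of $\mathcal I$ still makes $\{Y^{I}:I\in\mathcal I\}$ independent, so the Azuma--Hoeffding bound on $S(Y)=\tfrac1{|\mathcal I|}\sum_{I\in\mathcal I}\mathbf 1_{\{Y^{I}\in A\}}$ is unchanged; and the transfer from $Y$ to $X$ through Theorem~\ref{thm:Barvinok_summary}(ii),(iii) is identical. As in the proof of Theorem~\ref{thm:geo_approximation}, one takes $t=\tfrac12 n^{-1/2+\eps}$, $t=\tfrac12 n^{-\delta/2+\eps}$, or $t=\tfrac12 n^{1/2-\delta+\eps}$ according to whether both $(i_r,j_r)$ lie in the bottom right block, at least one lies in a side block but none in the top left block, or at least one lies in the top left block; since $N\le Cn^2+BCn^{1+\delta}$ and $m+n=O(n)$, we have $N^{\gamma(m+n)}\le\exp(cn\log n)$, so the second term in the Lemma~\ref{lemma:key} estimate is at most $\exp(-c'n^{1+2\eps})$ in each case, giving the displayed bound with the exponent $\eta(\delta)$ of the statement. (Alternatively one may invoke Remark~\ref{r:high-dim}.)

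\textbf{Step 2: assembling the estimate.} Independence of $Y_{i_1,j_1}$ and $Y_{i_2,j_2}$ gives $\P(Y_{i_1,j_1}{=}a,\,Y_{i_2,j_2}{=}b)=\P(Y_{i_1,j_1}{=}a)\,\P(Y_{i_2,j_2}{=}b)$ for all $a,b\in\nn$. Also, by Remark~\ref{remark:thm_geo_approx_marginal}, $d_{TV}(X_{i_r,j_r},Y_{i_r,j_r})\le n^{-\eta(\delta)+\eps}$ for $r=1,2$: indeed the single-entry convergence exponent for the block containing $(i_r,j_r)$ is at least $\eta(\delta)$, since $\delta-\tfrac12\le\tfrac\delta2\le\tfrac12$ for $\delta\in[0,1]$ shows $\eta(\delta)$ is exactly the minimum of the two single-entry exponents. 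Then, for all $a,b$,
\begin{align*}
&\bigl|\P(X_{i_1,j_1}{=}a,\,X_{i_2,j_2}{=}b)-\P(X_{i_1,j_1}{=}a)\,\P(X_{i_2,j_2}{=}b)\bigr|\\
&\qquad\le\bigl|\P(X_{i_1,j_1}{=}a,\,X_{i_2,j_2}{=}b)-\P(Y_{i_1,j_1}{=}a,\,Y_{i_2,j_2}{=}b)\bigr|\\
&\qquad\quad+\bigl|\P(Y_{i_1,j_1}{=}a)\,\P(Y_{i_2,j_2}{=}b)-\P(X_{i_1,j_1}{=}a)\,\P(X_{i_2,j_2}{=}b)\bigr|.
\end{align*}
The first term is at most $d_{TV}\bigl((X_{i_1,j_1},X_{i_2,j_2}),(Y_{i_1,j_1},Y_{i_2,j_2})\bigr)\le n^{-\eta(\delta)+\eps}$ by Step 1. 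For the second, insert and subtract $\P(Y_{i_1,j_1}{=}a)\,\P(X_{i_2,j_2}{=}b)$, use that all probabilities are $\le1$, and bound the two resulting differences by $d_{TV}(X_{i_2,j_2},Y_{i_2,j_2})$ and $d_{TV}(X_{i_1,j_1},Y_{i_1,j_1})$ respectively; this gives at most $2\,n^{-\eta(\delta)+\eps}$. Taking the supremum over $a,b$ and absorbing the factor $3$ into $\eps$ finishes the proof.

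The only step that is not entirely mechanical is Step~1 --- checking that the argument of Lemma~\ref{lemma:key}, which as written controls the law of a \emph{product} of entries, applies verbatim to the joint law of a \emph{tuple} of entries; once that is granted, the remainder is the telescoping bound above. The only bookkeeping needed is to confirm that the smaller of the two blocks containing $(i_1,j_1)$ and $(i_2,j_2)$ produces precisely the claimed exponent $\eta(\delta)$, which reduces to the elementary inequalities $\delta-\tfrac12\le\tfrac\delta2\le\tfrac12$.
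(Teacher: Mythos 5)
Your proposal is correct and follows essentially the same route as the paper: the appendix introduces Lemma~\ref{lemma:key2} (the tuple version of Lemma~\ref{lemma:key}) and Theorem~\ref{thm:geo_approximation_vector} (the tuple version of Theorem~\ref{thm:geo_approximation}), which are exactly your Step~1, and then assembles the estimate by the same triangle-inequality telescoping you carry out in Step~2, exploiting independence of the $Y$-entries. Your bookkeeping that $\eta(\delta)$ is the minimum of the single-entry exponents for the blocks involved is also the same observation the paper relies on implicitly when invoking Theorem~\ref{thm:geo_approximation}.
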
	

The following lemma is a minor generalization of Lemma \ref{lemma:key}.

\begin{lemma}[Concentration of submatrices]\label{lemma:key2}
	Let $\mathcal{M}(\r,\c)$ be the set of all \ts $m\times n$ contingency tables of margins $\r$ and~$\c$. Let $X=(X_{ij})$ be sampled from $\mathcal{M}(\r,\c)$ uniformly at random. Suppose $\mathcal{B}_{1},\ldots,\mathcal{B}_{k}$ are blocks in $\mathcal{M}(\r,\c)$ with $|\mathcal{B}_{1}|\le \ldots \le |\mathcal{B}_{k}|$. Then there exists an absolute constant $\gamma>0$, s.t.\ for each $I=(I_1,\ldots,I_k)\in \mathcal{B}_{1}\times \ldots \times \mathcal{B}_{k}$ and $t>0$, we have:
	\begin{align}
	d_{TV}\left( (X_{I_{\ell}})_{\ell=1}^{k}, (Y_{I_{\ell}})_{\ell=1}^{k}\right) \le t + N^{\gamma(m+n)} \exp\left( - \left\lfloor \frac{|\mathcal{B}_{1}|}{k} \right\rfloor \frac{t^{2}}{2} \right),
	\end{align}
	where $Z=(z_{ij})$ is the typical table for \ts $\mathcal{M}(\r, \c )$, and $Y=(Y_{J})$ is the random matrix of independent entries with \ts
	$Y_{ij}\sim \Geom(z_{ij})$, and \ts $N=r_1+\ldots+r_m = c_1+\ldots+c_n$.
\end{lemma}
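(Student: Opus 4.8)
The plan is to reproduce the proof of Lemma~\ref{lemma:key} almost word for word, once one notices that nowhere in that argument was it used that $W^{I}=\prod_{\ell=1}^{k}w_{I_\ell}$ is a \emph{product}: the argument only used that $W\mapsto W^{I}$ is some fixed measurable function of the $k$ entries $w_{I_1},\dots,w_{I_k}$. Taking that function to be the identity map $W\mapsto(w_{I_1},\dots,w_{I_k})\in\nn^{k}$ upgrades the one-dimensional conclusion of Lemma~\ref{lemma:key} to the joint conclusion stated here. (As in Lemma~\ref{lemma:key}, one reads the statement for tuples $I$ whose coordinates $I_1,\dots,I_k$ are pairwise distinct; if two coordinates coincide the bound cannot hold, since then two entries of $(X_{I_\ell})_{\ell}$ are forced equal while the corresponding entries of $(Y_{I_\ell})_{\ell}$ are independent.)

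Concretely, first I would select, exactly as in the proof of Lemma~\ref{lemma:key}, a subset $\mathcal{I}\subseteq\mathcal{B}_1\times\cdots\times\mathcal{B}_k$ with $|\mathcal{I}|\ge\lfloor|\mathcal{B}_1|/k\rfloor$ such that each matrix coordinate $(i,j)$ occurs in at most one element of $\mathcal{I}$ and each element of $\mathcal{I}$ has the same combinatorial ``shape'' as the given tuple $I$ (same pattern of shared rows and columns); in the extremal case $\mathcal{B}_1=\cdots=\mathcal{B}_k$ this amounts to partitioning $\mathcal{B}_1$ into $\lfloor|\mathcal{B}_1|/k\rfloor$ congruent $k$-cell patterns. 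Then I would fix a set $A\subseteq\nn^{k}$ and, for a matrix $W=(w_{ij})$, set $W^{I}:=(w_{I_1},\dots,w_{I_k})$ and $S(W):=|\mathcal{I}|^{-1}\sum_{I\in\mathcal{I}}\mathbf{1}_{\{W^{I}\in A\}}$. By the exchangeability of the entries of $X$ (and separately of $Y$) within each block of $\M(\r,\c)$, the vectors $X^{I}$, $I\in\mathcal{I}$, are identically distributed, as are the $Y^{I}$, $I\in\mathcal{I}$; and since distinct elements of $\mathcal{I}$ have disjoint coordinate supports while $Y$ has independent entries, the $Y^{I}$, $I\in\mathcal{I}$, are moreover mutually independent. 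In particular $\P(X^{I}\in A)=\E[S(X)]$ and $\P(Y^{I}\in A)=\E[S(Y)]$ for the prescribed $I$.

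The remaining steps are verbatim from Lemma~\ref{lemma:key}. The Azuma--Hoeffding inequality applied to the average $S(Y)$ of the bounded independent indicators $\mathbf{1}_{\{Y^{I}\in A\}}$ gives $\P(|S(Y)-\E[S(Y)]|\ge t)\le 2\exp(-\lfloor|\mathcal{B}_1|/k\rfloor\,t^{2}/2)$. Since $S(W)$ depends only on the entries of $W$ at the coordinates appearing in $\mathcal{I}$, applying Theorem~\ref{thm:Barvinok_summary}(ii)--(iii) to the event $\{W:|S(W)-\E[S(Y)]|>t\}$ gives $\P(|S(X)-\E[S(Y)]|>t)\le N^{\gamma(m+n)}\,\P(|S(Y)-\E[S(Y)]|>t)$. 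Conditioning on whether $|S(X)-\E[S(Y)]|\le t$ and using $0\le S(X)\le 1$, I would conclude
\[
\bigl|\P(X^{I}\in A)-\P(Y^{I}\in A)\bigr|=\bigl|\E[S(X)]-\E[S(Y)]\bigr|\le\E\bigl|S(X)-\E[S(Y)]\bigr|\le t+4\,N^{\gamma(m+n)}\exp\!\left(-\left\lfloor\frac{|\mathcal{B}_1|}{k}\right\rfloor\frac{t^{2}}{2}\right);
\]
taking the supremum over $A\subseteq\nn^{k}$ turns the left-hand side into $d_{TV}\bigl((X_{I_\ell})_{\ell=1}^{k},(Y_{I_\ell})_{\ell=1}^{k}\bigr)$, and absorbing the $4$ into $\gamma$ finishes the proof. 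I do not anticipate a genuine obstacle --- this really is a minor generalization --- and the only point that takes a little care is the exchangeability bookkeeping in the second paragraph (that all elements of $\mathcal{I}$, and the prescribed $I$, are related by margin-preserving row and column permutations), which is exactly the point already present, at the same level of detail, in the proof of Lemma~\ref{lemma:key}.
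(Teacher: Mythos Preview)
Your proof is correct and follows exactly the paper's approach: redefine $W^{I}$ as the tuple $(w_{I_1},\dots,w_{I_k})$ rather than the product, take $A\subseteq\nn^{k}$, and run the argument of Lemma~\ref{lemma:key} verbatim. (One small slip in your parenthetical aside: if $I_1=I_2$ then $Y_{I_1}$ and $Y_{I_2}$ are the \emph{same} entry of the random matrix $Y$, not independent copies, so the bound would in fact still hold by reduction to the lower-dimensional case; but this aside plays no role in the proof itself.)
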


\begin{proof}
	Let \ts $\M(\r, \c)$ \ts blocks \ts $\mathcal{B}_{1},\ldots, \mathcal{B}_{k}$ \ts such that \ts $|\mathcal{B}_{1}|\le \ldots \le |\mathcal{B}_{k}|$. Let $Z=(z_{ij})$ be the typical table for \ts $\mathcal{M}(\r,\c)$, and let \ts $Y=(Y_{J})$ \ts denote the random matrix of independent entries where $Y_{ij}\sim \Geom(z_{ij})$. Observe that we can choose a subset $\mathcal{I}\subseteq \mathcal{B}_{1}\times \ldots \times \mathcal{B}_{k}$ such that $|\mathcal{I}|\ge |\mathcal{B}_{1}|/k$ and every two elements of $\mathcal{I}$ have distinct coordinates. Fix measurable sets \ts $A\subseteq \R_{+}^{k}$ \ts and \ts
	$\mathcal{A}\subseteq \R_+^{m n}$.
	
	For a \ts $m\times n$ matrix $W=(w_{ij})$ \ts and \ts $I\in \mathcal{B}_{1}\times \ldots \times \mathcal{B}_{k}$, denote
	\begin{align}
	W^{I} \, :=  (w_{I_\ell})_{\ell=1}^{k}\in \R^{k}, \qquad S(W) \, := \, \frac{1}{|\mathcal{I}|}\. \sum_{I\in \mathcal{I}} \, \mathbf{1}_{\{W^{I} \in A\}}\,.
	\end{align}
	Note that by the exchangeability of the entries of $X$ and $Y$ in each block of \ts $\mathcal{M}(\r,\c)$, random vectors \ts
	$X^{I}$ \ts and \ts $Y^{I}$ \ts have the same distribution for all~$I$. In particular, we have:
	\begin{align}
	\P(X^{I}\in A) \. = \. \E\bigl[S(X)\bigr]\ts.
	\end{align}
	Moreover, since $Y_{ij}$ are independent and since every two elements of \ts $\mathcal{I}$ \ts
	have non-overlapping coordinates, it follows that \ts $Y^{I}$ \ts are also independent.
	
	Now the rest of the argument is identical to the proof of Lemma \ref{lemma:key}.
\end{proof}

The following is a vector version of Theorem \ref{thm:geo_approximation}.

\begin{theorem}\label{thm:geo_approximation_vector}
	Let $X=(X_{ij})$ be sampled from $\M_{n,\delta}(B,C)$ uniformly at random. Let $Z=(z_{ij})$ be the typical table for $\mathcal{M}_{n,\delta}(B,C)$ and let $Y=(Y_{ij})$ denote the random matrix of independent entries with $Y_{ij}\sim \Geom(z_{ij})$. Fix integers $k\ge 1$ and $1\le i_{r},j_{r}\le n+\lfloor n^{\delta} \rfloor$, $1\le r\le k$. Then for every $\eps>0$, we have:
	\begin{align}
	d_{TV}\left( (X_{i_{r},j_{r}})_{r=1}^{k}, ( Y_{i_{r},j_{r}})_{r=1}^{k}\right)
	\le n^{-\eta(\delta) + \eps}
	\end{align}
	for all sufficiently large $n\ge 1$, where $\eta(\delta)$ is defined as in \eqref{eq:def_eta_exponent}. 
\end{theorem}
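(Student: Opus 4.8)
The plan is to repeat the proof of Theorem~\ref{thm:geo_approximation} almost verbatim, replacing Lemma~\ref{lemma:key} by its submatrix version Lemma~\ref{lemma:key2}. Since Lemma~\ref{lemma:key2} already bounds $d_{TV}$ between the joint vectors $(X_{I_\ell})_{\ell=1}^k$ and $(Y_{I_\ell})_{\ell=1}^k$ (rather than between the products $\prod_\ell X_{I_\ell}$ and $\prod_\ell Y_{I_\ell}$), no extra argument is needed to pass from the scalar statement of Theorem~\ref{thm:geo_approximation} to the vector statement here.

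Concretely, I would fix $\eps>0$ and note that each index $(i_r,j_r)$ lies in one of the four blocks of $\M_{n,\delta}(B,C)$, of sizes $(\lfloor n^\delta\rfloor)^2$ (top left), $n\lfloor n^\delta\rfloor$ (each side block), and $n^2$ (bottom right). Relabel the indices so that the block $\mathcal{B}_1\ni(i_1,j_1)$ has smallest cardinality among the blocks containing a chosen index; then $|\mathcal{B}_1|=\Theta(n^{2\delta})$ if some $(i_r,j_r)$ lies in the top left block, $|\mathcal{B}_1|=\Theta(n^{1+\delta})$ if none does but some lies in a side block, and $|\mathcal{B}_1|=n^2$ if all lie in the bottom right block, which are exactly the three cases defining $\eta(\delta)$ in~\eqref{eq:def_eta_exponent}. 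I then apply Lemma~\ref{lemma:key2} with $t=\tfrac12 n^{-1/2+\eps}$, $t=\tfrac12 n^{-(\delta/2)+\eps}$, and $t=\tfrac12 n^{-(\delta-1/2)+\eps}$ in these three cases respectively.

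It remains to check that the second term $N^{\gamma(m+n)}\exp\bigl(-\lfloor|\mathcal{B}_1|/k\rfloor\,t^2/2\bigr)$ in Lemma~\ref{lemma:key2} is negligible. Since $N\le Cn^2+BCn^{1+\delta}$ and the table has both side lengths $\lfloor n^\delta\rfloor+n$, so that $m+n=O(n)$ in the notation of Lemma~\ref{lemma:key2}, one has $N^{\gamma(m+n)}=\exp\bigl(O(n\log n)\bigr)$, while in each of the three cases the prescribed $t$ makes $\lfloor|\mathcal{B}_1|/k\rfloor\,t^2/2=\Theta(n^{1+2\eps})$ — the fixed integer $k$ and the numerical constants being harmless. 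Hence that term is $\exp\bigl(-\Theta(n^{1+2\eps})\bigr)$, which is dominated by $t$, and therefore $d_{TV}\bigl((X_{i_r,j_r})_{r=1}^k,(Y_{i_r,j_r})_{r=1}^k\bigr)\le t+\exp\bigl(-\Theta(n^{1+2\eps})\bigr)=O(n^{-\eta(\delta)+\eps})$ for all sufficiently large $n$; after relabelling $\eps$ this is the stated bound.

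I do not expect a genuine obstacle: the probabilistic content is entirely carried by Lemma~\ref{lemma:key2}, whose proof the excerpt already reduces to that of Lemma~\ref{lemma:key}. The only points needing care are the bookkeeping of which of the four blocks is smallest (so that the exponent produced matches $\eta(\delta)$), the use of $\delta\le 1$ to guarantee that $N$ is polynomially bounded and $m+n=O(n)$, so that the $N^{\gamma(m+n)}$ factor is beaten by the Gaussian tail $\exp\bigl(-\lfloor|\mathcal{B}_1|/k\rfloor\,t^2/2\bigr)$, and absorbing the constant $k$ and the $N^{\gamma(m+n)}$ factor into the $O(\cdot)$ and into the free parameter $\eps$.
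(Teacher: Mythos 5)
Your proposal is correct and is exactly the paper's intended approach: the paper's proof of this vector version is the one-liner ``identical to that for Theorem~\ref{thm:geo_approximation} using Lemma~\ref{lemma:key2},'' and you have correctly unpacked that by identifying the smallest block size in each of the three cases, choosing $t=\tfrac12 n^{-\eta(\delta)+\eps}$ accordingly so that $\lfloor|\mathcal{B}_1|/k\rfloor t^2/2=\Theta(n^{1+2\eps})$ beats $N^{\gamma(m+n)}=\exp(O(n\log n))$, and concluding. Nothing is missing.
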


\begin{proof}
	Identical to that for Theorem \ref{thm:geo_approximation} using Lemma \ref{lemma:key2}.
\end{proof}

\begin{proof}[\textbf{Proof of Theorem \ref{thm:asymp_indep}}]
	Let $Z=(z_{ij})$ be the typical table for \ts $\mathcal{M}(\r,\c)$, and let \ts $Y=(Y_{J})$ \ts denote the random matrix of independent entries where $Y_{ij}\sim \Geom(z_{ij})$. By the symmetry within blocks, in order to bound the maximum in the assertion, we only need to consider six cases corresponding to the combination of blocks that entries $X_{i,j}$ and $X_{k,\ell}$ belong. 
	
	Consider the case when both entries are contained in the top left blocks. The other cases can be shown by a similar argument.  Note that for any $a,b\in \mathbb{N}$,
	\begin{align}
	&\left| \P(X_{n+1,n+1}=a,\, X_{n+2,n+2}=b) - \P(Y_{n+1,n+1}=a,\, Y_{n+2,n+2}=b) \right| \\
	&\qquad \le d_{TV}((X_{n+1,n+1},X_{n+2,n+2}), (Y_{n+1,n+1},Y_{n+2,n+2})), \\
	&\left| \P(X_{n+1,n+1}=a) - \P(Y_{n+1,n+1}=a) \right|  \le d_{TV}(X_{n+1,n+1}, Y_{n+1,n+1}), \\
	&\left| \P(X_{n+2,n+2}=b) - \P(Y_{n+2,n+2}=b) \right|  \le d_{TV}(X_{n+2,n+2}, Y_{n+2,n+2}).
	\end{align}
	Since $Y_{n+1}$ and $Y_{n+2}$ are independent, triangle inequality gives 
	\begin{align}
	&\sup_{a,b\in \mathbb{N}} \left| \P(X_{n+1,n+1}=a,\, X_{n+2,n+2}=b) - \P(X_{n+1,n+1}=a) \P(X_{n+2,n+2}=b) \right| \\
	& \qquad \le d_{TV}((X_{n+1,n+1},X_{n+2,n+2}), (Y_{n+1,n+1},Y_{n+2,n+2})) + d_{TV}(X_{n+1,n+1}, Y_{n+1,n+1}) + d_{TV}(X_{n+2,n+2}, Y_{n+2,n+2}).
	\end{align}
	Then by Theorem \ref{thm:geo_approximation}, for any fixed $\eps>0$, each of the total variation distances above is at most $n^{-1/2+\eps}$ for sufficiently large $n\ge 1$. 
\end{proof}

\end{document}